\DeclareSymbolFont{calletters}{OMS}{cmsy}{m}{n}
\DeclareSymbolFontAlphabet{\mathcal}{calletters}
\def\be{\begin{eqnarray}}
\def\ee{\end{eqnarray}}
\def\b*{\begin{eqnarray*}}
\def\e*{\end{eqnarray*}}
\newcommand{\nn}{\nonumber}
\newcommand{\bq}{\begin{equation*}}
\newcommand{\eq}{\end{equation*}}		
\newcommand{\bqn}{\begin{equation}}
\newcommand{\eqn}{\end{equation}}
\newcommand{\bqq}{\begin{eqnarray*}}
\newcommand{\eqq}{\end{eqnarray*}}
\newcommand{\bqqn}{\begin{eqnarray}}
\newcommand{\eqqn}{\end{eqnarray}}
\newtheorem{Theorem}{Theorem}[section]
\newtheorem{Proposition}[Theorem]{Proposition}
\newtheorem{Assumption}[Theorem]{Assumption}
\newtheorem{Lemma}[Theorem]{Lemma}
\newtheorem{Corollary}[Theorem]{Corollary}
\newtheorem{Remark}[Theorem]{Remark}
\makeatletter \@addtoreset{equation}{section}
\def \E{\mathbb{E}}
\def \F{\mathbb{F}}
\def \L{\mathbb{L}}
\def \P{\mathbb{P}}
\def \Q{\mathbb{Q}}
\def \R{\mathbb{R}}
\def \N{\mathbb{N}}
\def \V{\mathbb{V}}
\def\Ac{{\cal A}}
\def\Bc{{\cal B}}
\def\Cc{{\cal C}}
\def\Dc{{\cal D}}
\def\Fc{{\cal F}}
\def\Hc{{\cal H}}
\def\Mc{{\cal M}}
\def\Pc{{\cal P}}
\def\Vc{{\cal V}}
\def\Omt{\tilde{\Om}}
\def\Fct{\tilde{\Fc}}
\def\Pt{\tilde{\P}}
\def \Om{\Omega}
\def \om{\omega}
\def \Omb{\overline{\Om}}
\def \eps{\varepsilon}
\def \xb{\mathbf{x}}
\def \0{\mathbf{0}}
\def \Fcb{\overline{{\cal F}}}
\def \Fbb{\overline{\mathbb{F}}}
\def \Ft{\tilde{\F}}
\newcommand{\rmi}{{\rm (i)$\>\>$}}
\newcommand{\rmii}{{\rm (ii)$\>\>$}}
\newcommand{\rmiii}{{\rm (iii)$\>\,    \,$}}
\newcommand{\rmiv}{{\rm (iv)$\>\>$}}
\def\x{\times}
\def\1{{\bf 1}}
\def \proof{{\noindent \bf Proof. }}
\def \lambdab{\bar{\lambda}}
\title{Optimal Skorokhod embedding given full marginals and Az\'ema-Yor peacocks
\footnote{The authors gratefully acknowledge the financial support of the ERC 321111 Rofirm, the ANR Isotace, the Chairs Financial Risks (Risk Foundation, sponsored by Soci\'et\'e G\'en\'erale), Finance and Sustainable Development (IEF sponsored by EDF and CA).}
}
\author{Sigrid K\"allblad\thanks{CMAP, Ecole Polytechnique, France. sigrid.kallblad@cmap.polytechnique.fr}
	\and Xiaolu TAN\thanks{CEREMADE, University of Paris-Dauphine, France.
	tan@ceremade.dauphine.fr}
	\and Nizar Touzi\thanks{CMAP, Ecole Polytechnique, France. 
	nizar.touzi@polytechnique.edu}
}
\date{\today}
\begin{document}

\maketitle

\abstract{We consider the optimal Skorokhod embedding problem (SEP) given full marginals 
	over the time interval $[0,1]$.
	The problem is related to the study of extremal martingales associated with a peacock 
	(``process increasing in convex order'', by Hirsch, Profeta, Roynette and Yor \cite{HPRY}).
	A general duality result is obtained by convergence techniques.
	We then study the case where the reward function depends on the maximum of the embedding process,
	which is the limit of the martingale transport problem studied in Henry-Labord\`ere, Ob{\l}{\'o}j, Spoida and Touzi \cite{HOST}.
	Under technical conditions, some explicit characteristics of the solutions to the optimal SEP as well as to its dual problem are obtained.
	We also discuss the associated martingale inequality.	

	\vspace{2mm}

	{\bf Key words.} Skorokhod embedding problem, peacocks, martingale inequality, 
	martingale transport problem, maximum of martingale given marginals

}

\section{Introduction} 
\label{sec:introduction}

	For a given probability measure $\mu$ on $\R$, centered and with finite first moment, 
	the Skorokhod embedding problem (SEP) consists in finding a stopping time $T$ for a Brownian motion $W$, such that $W_{T} \sim \mu$ 
	and the stopped process $(W_{T \wedge \cdot})$ is uniformly integrable.
	We consider here an extended version.
	Let $(\mu_t)_{t \in [0,1]}$ be a family of probability measures
	that are all centered, have finite first moments, and are non-decreasing in convex order,
	i.e. $t \mapsto \mu_t(\phi) := \int_{\R} \phi(x) \mu_t(dx)$ is non-decreasing for every convex function $\phi: \R\to \R$.
	The extended Skorokhod embedding problem is to find a non-decreasing family of stopping times, 
	$(T_t)_{t \in [0,1]}$, for a Brownian motion $W$,
	such that $W_{T_t} \sim \mu_t, \forall t \in [0,1]$,
	and each stopped process $(W_{T_1 \wedge \cdot})$ is uniformly integrable.
	We study here an optimal Skorokhod embedding problem
	which consists in maximizing a reward value among the class of all such extended embeddings.

	For such a family $\mu = (\mu_t)_{t \in [0,1]}$, it follows from Kellerer's theorem (see e.g. Kellerer \cite{Kellerer} or Hirsch and Roynette \cite{HR}) that there exists at least one (Markov) martingale whose one-dimensional marginal distributions coincide with $\mu$.
	Assume in addition that $t \mapsto \mu_t$ is right-continuous,
	then any associated martingale admits a right-continuous modification.
	Moreover, Monroe \cite{Monroe} showed that any right-continuous martingale can be embedded into a Brownian motion with a non-decreasing family of stopping times.
	This implies that the collection of solutions to the extended Skorokhod embedding problem is non-empty.
	Furthermore, the above optimal SEP is thus related to the study of extremal martingales associated with peacocks.
	A peacock (or PCOC ``Processus Croissant pour l'Ordre Convexe'') is a continuous time stochastic process whose one-dimensional marginal distributions are non-decreasing in convex order
	according to Hirsch, Profeta, Roynette and Yor \cite{HPRY}.	
	Since Kellerer's theorem ensures the existence of martingales with given one-dimensional marginal distributions,
	the interesting subject is to construct these associated martingales;
	we refer to the book \cite{HPRY} and the references therein for various techniques.
	We also mention that when the marginal distributions are those of a Brownian motion,
	an associated martingale is also referred to as a fake Brownian motion; see e.g. \cite{alb, fan, hob, ole}.

	Our problem to find the extremal martingales associated with a given peacock
	is motivated by its application in finance.
	Specifically, given the prices of vanilla options for all strikes at a maturity, one can recover the marginal distribution of the underlying at this maturity (see e.g. Breeden and Litzenberger \cite{BL}).
	Taking into consideration all martingales fitting these marginal distributions, one then obtains model-independent bounds on arbitrage-free prices of exotic options.
	The problem was initially studied using the SEP approach by Hobson \cite{Hobson1998} and many others.
	This approach is based on the fact that any continuous martingale can be viewed as a time-changed Brownian motion;
	let us refer to the survey paper of Ob{\l}{\'o}j \cite{Obloj} and Hobson \cite{Hobson2011}.
	More recently, it has also been studied using the so-called martingale transport approach introduced in Beiglb\"ock, Henry-Labord\`ere and Penkner \cite{BHP} and Galichon, Henry-Labord\`ere and Touzi \cite{GHT}. Since then, there has been an intensive development of the literature on martingale optimal transport and the connection with model-free hedging in finance. In the present context of full marginals constraint, Henry-Labord\`ere, Tan and Touzi \cite{HTT} considered reward functions satisfying the so-called martingale Spence-Mirrlees condition, and solved an example of martingale transport problem with quasi-explicit construction of the corresponding martingale peacock and the optimal semi-static hedging strategy.

	In this paper, we study extremal martingale peacocks using the optimal SEP approach.
	First, taking the limit of a duality result for a general optimal SEP under finitely-many marginal constraints, 
	established in Guo, Tan and Touzi \cite{GTT} (extending a duality result in Beiglb\"ock, Cox and Huesmann \cite{BH}),
	we obtain a general duality result for the optimal SEP under full marginal constraints.
	Thereafter, we study the case where the reward function depends on the realized maximum of the embedding process.
	For the problem with finitely-many marginal constraints,
	the optimal embedding is then given by the iterated Az\'ema-Yor embedding proposed by
	Ob{\l}{\'o}j and Spoida \cite{OS}, which extends the embeddings of Az\'ema and Yor \cite{AzemaYor} and Brown, Hobson and Rogers \cite{BHR}.
	The solution to the associated dual problem, as well as the optimal value, is studied in Henry-Labord\`ere, Ob{\l}{\'o}j, Spoida and Touzi \cite{HOST}.
	By applying limiting arguments, we obtain some explicit characterization of the optimal value and the primal and dual optimizers for the corresponding optimal SEP under full marginal constraints.

	\vspace{1mm}

	The rest of the paper is organized as follows. The main results are presented in Section \ref{sec:main_results}: 
	in Section \ref{subsec:OSEP} we formulate our optimal SEP given full marginals,
	in Section \ref{subsec:duality} we provide the general duality result, 
	in Section \ref{subsec:MMfullM} we focus on a class of maximal reward functions for which we specify the value of the problem and give the explicit form of a dual optimizer, 
	and in Section \ref{subsec:MartIneq} we present an associated martingale inequality. 
	In Section \ref{sec:discussions} we provide further discussion of our results and relate them to the finite-marginal SEP. 
	Specifically, we show that our full marginal optimal SEP has the interpretation as the limit of certain optimal SEP and martingale transport problems under finitely many marginals. 
	The proofs are completed in Section \ref{sec:Proofs}.

	\vspace{2mm}
	
	\noindent {\bf Notation.}
	\rmi Let $\Om := C(\R_+, \R)$ denote the canonical space of all continuous paths $\om$ on $\R_+$ such that $\om_0 = 0$, 
	$B$ be the canonical process, $\P_0$ be the Wiener measure under which $B$ is a standard Brownian motion,
	$\F^0 = (\Fc^0_t)_{t \ge 0}$ denote the canonical filtration generated by $B$,
	and $\F^a = (\Fc^a_t)_{t \ge 0}$ be the augmented filtration under $\P_0$.
	
	We equip $\Om$ with the compact convergence topology 
	(see e.g. Whitt \cite{whitt} or Stroock and Varadhan \cite{SV}):
	\be \label{eq:rho}
		\rho(\om, \om ')
		&:=&
		\sum_{n \ge 1} \frac{1}{2^n} \frac{\sup_{0 \le t \le n} \big| \om_t - \om_t' \big|}{1 + \sup_{0 \le t \le n} \big| \om_t - \om_t' \big|},
		~~~ \forall \om , \om' \in \Om.
	\ee
	Then $(\Om, \rho)$ is a Polish space (separable and complete metric space).

	\noindent \rmii Let $\V^+_r = \V^+_r([0,1], \R_+)$ denote the space of all non-decreasing c\`adl\`ag functions on $[0,1]$ taking values in $\R_+$.
	Similarly, let $\V^+_l=\V^+_l([0,1],\R)$ denote the space of all non-decreasing c\`agl\`ad functions on $[0,1]$ taking values in $\R$. 
	
	Further, we equip $\V^+_r$ and $\V^+_l$ with the L\'evy metric: for all $\theta, \theta' \in \V^+_r$, 
	\bqn \label{eq:levy_metric}
		d(\theta,\theta')
		~:=~
		\inf \Big\{\eps>0 ~: 
		\theta_{t-\eps} - \eps
		~\le~ 
		\theta'_t
		~\le~
		\theta_{t+\eps}+\eps,
		~\forall t \in[0 ,1]\Big\},
	\eqn
	where we extend the definition of $\theta$ to $[- \eps, 1+\eps]$ by letting
	$\theta_s := \theta_0$ for $s \in [-\eps, 0]$ and $\theta_s := \theta_1$ for $s \in [1, 1+\eps]$.
	Then $\V^+_r$ and $\V^+_l$ are both Polish spaces.

	\noindent \rmiii
	As in El Karoui and Tan \cite{EKT1, EKT2}, 
	we define an enlarged canonical space by $\Omb := \Om \x \V^+_r$,
	where the canonical process is denoted by $\overline B = (B, T)$.
	The canonical filtration on the enlarged canonical space is denoted by $\Fbb = (\Fcb_t)_{t \ge 0}$,
	where $\Fcb_t$ is generated by $(B_s)_{s \in [0,t]}$ and all the sets $\{ T_r \le s \}$ 
	for $s \in [0,t]$ and $r \in [0,1]$.
	In particular, all the canonical variables $(T_r)_{r \in [0,1]}$ are $\Fbb$-stopping times.

	We notice that 
	the $\sigma$-field $\Fcb_{\infty} := \bigvee_{t \ge 0} \Fcb_t$ coincides with the Borel $\sigma$-field of the Polish space $\Omb$ (see Lemma \ref{lemm:BorelTribu}).
	
	For a set $\Pc$ of probability measures on $\Omb$, we say that a property holds $\Pc-$quasi-surely (q.s.) if it holds $\P-$a.s. for all $\P \in \Pc$.
	
	\vspace{2mm}

	\noindent \rmiv Let $\Cc_b$ denote the space of all bounded continuous functions from $\R$ to $\R$, and by $\Cc_1$ the space of all functions $f:\R \to \R$ such that $\frac{f(x)}{1 + |x|} \in \Cc_b$.

\section{Main results}
\label{sec:main_results}

	Throughout the paper, we are given a family of probability measures on $\R$, $\mu = (\mu_t)_{t \in [0,1]}$, satisfying the following condition.

	\begin{Assumption} \label{ass:mu}
		The family of marginal distributions, $\mu = (\mu_t)_{t \in [0,1]}$, satisfies: 
		\bq
			\int_\R~|x|~\mu_t(dx)<\infty
			~~\textrm{and}~~
			\int_\R ~x~ \mu_t(dx)=0,
			~~\textrm{$t\in[0,1]$}.
		\eq	
		Furthermore, $\mu_0 = \delta_{\{0\}}$, $t \mapsto \mu_t$ is c\`adl\`ag w.r.t. the weak convergence topology,
		and $\mu$ is non-decreasing in convex ordering, i.e. for every convex function $\phi:\R\to\R$,
		$$
			\mu_s(\phi)
			~~\le~~
			\mu_t(\phi) ~:=~ \int_\R\phi(x) \mu_t(dx),
			~~~~\mbox{whenever}~ s \le t.
		$$ 
	\end{Assumption}

\subsection{The optimal SEP given full marginals}
\label{subsec:OSEP}
	
	First, let us formulate the optimal SEP given full marginals.
	Let $\Pc(\Omb)$ denote the collection of all Borel probability measures on the canonical space $\Omb$,	and define
	\be
		\Pc
		&:=&
		\Big\{
			\P\in\Pc(\Omb) ~: 
			B ~\mbox{is a}~ \Fbb \mbox{-Brownian motion and}~\nonumber\\
		&&~~~~~~~~~~~~~~~~~~~~
			B_{\cdot \wedge T_1}
		~\mbox{is uniformly integrable under}
		~\P
		\Big\}.
	\ee
	For the given marginals $\mu = (\mu_t)_{t\in[0,1]}$, we then define
	\b*
		\Pc(\mu)
		&:=&
		\Big\{
			\P \in \Pc ~: 
			B_{T_t} \sim^{\P} \mu_t,
			~\forall t \in [0,1]
		\Big\}.
	\e*
	
	\begin{Lemma} \label{lemm:canon_form}
		Suppose that Assumption \ref{ass:mu} holds true,
		then $\Pc(\mu)$ is non-empty.
	\end{Lemma}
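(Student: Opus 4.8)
The plan is to exhibit one element of $\Pc(\mu)$ by combining Kellerer's theorem with Monroe's time--change representation of martingales, and then pushing the resulting pair (Brownian motion, family of stopping times) forward onto the canonical space $\Omb$.

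First, I would use Kellerer's theorem \cite{Kellerer} (see also Hirsch and Roynette \cite{HR}): under Assumption \ref{ass:mu}, $\mu = (\mu_t)_{t \in [0,1]}$ is a peacock with $\mu_0 = \delta_{\{0\}}$, so there exists a martingale $X = (X_t)_{t \in [0,1]}$, on some filtered probability space $(\Om', \Fc', \F', \Q')$, with $X_t \sim \mu_t$ for all $t$ and $X_0 = 0$. Since $t \mapsto \mu_t$ is right--continuous for the weak topology, $X$ admits a c\`adl\`ag modification, which I still denote by $X$; being a martingale indexed by the compact interval $[0,1]$, it is closed by $X_1$ and hence uniformly integrable on $[0,1]$.

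Next, I would apply Monroe's embedding theorem \cite{Monroe} to the c\`adl\`ag martingale $X$: enlarging $(\Om', \Fc', \Q')$ if necessary by an independent randomisation, there exist a Brownian motion $W$ (with $W_0 = 0$) and a non--decreasing, right--continuous family of stopping times $(\tau_t)_{t \in [0,1]}$, with respect to the augmented natural filtration of $W$ (enlarged by the randomisation), such that $\tau_0 = 0$ and $(W_{\tau_t})_{t \in [0,1]}$ has the same law as $(X_t)_{t \in [0,1]}$; in particular $W_{\tau_t} \sim \mu_t$ for every $t \in [0,1]$, and since $X$ is uniformly integrable on $[0,1]$ one has $\tau_1 < \infty$ $\Q'$--a.s.\ together with uniform integrability of the stopped process $W_{\cdot \wedge \tau_1}$. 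I would then set $\P$ to be the law on $\Omb = \Om \x \V^+_r$ of the random element $\big( W_\cdot, (\tau_t)_{t \in [0,1]} \big)$; monotonicity and right--continuity of $t \mapsto \tau_t$ ensure that this takes values in $\V^+_r$, and its Borel measurability (together with Lemma \ref{lemm:BorelTribu}) gives $\P \in \Pc(\Omb)$. To conclude $\P \in \Pc(\mu)$: under the pushforward the generating events $\{ T_r \le s \}$ of $\Fcb_t$ (for $s \le t$, $r \in [0,1]$) pull back to $\{ \tau_r \le s \}$, which lie in the augmented natural filtration of $W$ at time $s$ by the stopping--time property of $\tau_r$; hence $\Fcb_t$ is contained in the ($\Q'$--augmented) filtration of $W$ up to time $t$, so $B$ is still a Brownian motion with respect to the smaller filtration $\Fbb$ under $\P$, while uniform integrability of $B_{\cdot \wedge T_1}$ and the marginal identities $B_{T_t} \sim^{\P} \mu_t$, $t \in [0,1]$, are inherited directly from the corresponding properties of $(W, \tau)$. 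This yields $\P \in \Pc(\mu)$, so $\Pc(\mu) \ne \emptyset$.

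The step I expect to require the most care is this last verification, namely checking that enlarging the Brownian filtration by the whole family $(T_r)_{r \in [0,1]}$ does not destroy the Brownian character of $B$; this is settled, as above, by the fact that each $\tau_r$ is a stopping time of the natural filtration of $W$ (possibly enlarged by an \emph{independent} randomisation, which is itself harmless for the Brownian property). A secondary technical point, which I would handle by invoking the precise statement of Monroe's theorem, is the right--continuity and measurability of $t \mapsto \tau_t$ needed to regard $(\tau_t)_{t \in [0,1]}$ as a $\V^+_r$--valued random variable, and the uniform integrability of $W_{\cdot\wedge\tau_1}$ coming from the uniform integrability of $X$.
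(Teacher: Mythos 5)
Your proof is correct and follows essentially the same route as the paper's: Kellerer's theorem to produce a right-continuous martingale with marginals $\mu$, Monroe's Theorem 11 to embed it via a non-decreasing right-continuous family of minimal stopping times in a Brownian motion, and then pushing the pair forward to $\Omb$. The only difference is that you spell out the filtration verification (that $B$ remains an $\Fbb$-Brownian motion under the induced law) which the paper states without detail; this is a correct and useful elaboration but not a different argument.
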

	
	\begin{proof}
	Since the marginal distributions $\mu = (\mu_t)_{t \in [0,1]}$ satisfy Assumption \ref{ass:mu}, 
	it follows from Kellerer's theorem (see e.g. Kellerer \cite{Kellerer} or Hirsch and Roynette \cite{HR}) that there is a martingale $M$ such that $M_t \sim \mu_t$ for all $t \in [0,1]$.
	Since $t \mapsto \mu_t$ is right-continuous, the martingale $M$ can be chosen to be right-continuous.
	It follows from Theorem 11 in Monroe \cite{Monroe}, that there is a Brownian motion $W$ and a family of non-decreasing and right-continuous stopping times $(\tau_t)_{t \in [0,1]}$, such that $(W_{\tau_1 \wedge \cdot})$ is uniformly integrable and $(W_{\tau_{\cdot}})$ has the same finite-dimensional distributions as $(M_{\cdot})$.
	In consequence, the probability induced by $(W_{\cdot}, \tau_{\cdot})$ on $\Omb$ belongs to $\Pc(\mu)$.
	\qed
	\end{proof}

	
	The main objective of the paper is to study the following optimal Skorokhod Embedding Problem (SEP) under full marginal constraints:
	\be \label{eq:main_problem}
		P(\mu)
		~=~
		\sup_{\P\in\Pc(\mu)}
		\E^{\P} \big[ \Phi\big( B_{\cdot}, T_{\cdot} \big) \big],
	\ee
	where  $\Phi: \Omb \to \R$ is a reward function which is assumed to be upper semicontinuous and bounded from above.

	\vspace{3mm}
	
	The optimal SEP \eqref{eq:main_problem} under full marginal constraints is given in a weak formulation. 
	We specify this next. For given marginals $\mu = (\mu_t)_{t \in [0,1]}$, let a $\mu$-embedding be a term
	\be \label{eq:alpha}
		\alpha 
		&=& 
		\big(\Om^{\alpha}, \Fc^{\alpha}, \P^{\alpha}, \F^{\alpha} = (\Fc_t^{\alpha})_{t \ge 0}, (W^{\alpha}_t)_{t \ge 0}, (T^{\alpha}_s)_{s\in [0,1]} \big),
	\ee
	such that in the filtered space $\big(\Om^{\alpha}, \Fc^{\alpha}, \P^{\alpha}, \F^{\alpha} \big)$,
	$W^{\alpha}_{\cdot}$ is a Brownian motion, 
	$T^{\alpha}_{\cdot}$ is a non-decreasing c\`adl\`ag family of stopping times,
	the stopped process $(W^{\alpha}_{T^{\alpha}_1 \wedge \cdot})$ is uniformly integrable,
	and $W^{\alpha}_{T^{\alpha}_t} \sim^{\P^{\alpha}} \mu_t$ for every $t \in [0,1]$.
	Denote by $\Ac(\mu)$ the collection of all $\mu$-embeddings $\alpha$.	
	It is clear that every $\mu$-embedding  $\alpha \in \Ac(\mu)$ induces on the canonical space $\Omb$ a probability measures $\P \in \Pc(\mu)$.
	Further, every $\P \in \Pc(\mu)$ together with the canonical space $\Omb$ and canonical process $\overline B$, forms a stopping term in $\Ac(\mu)$.
	It follows that the set $\Pc(\mu)$ is the collection of all probability measures $\P$ on $\Omb$, induced by the embeddings $\alpha \in \Ac(\mu)$.
	As a direct consequence, the optimal SEP \eqref{eq:main_problem}
	admits the following equivalent formulation: 
	\be \label{eq:P_embedding}
		P(\mu)
		&:=&
		\sup_{\alpha \in \Ac(\mu)}
		\E^{\alpha} \big[ \Phi \big( W^{\alpha}_{\cdot}, T^{\alpha}_{\cdot} \big) \big].
	\ee

\subsection{Duality for the full marginal SEP problem}
\label{subsec:duality}

	In order to introduce the dual problem,
	let $\L^2_{loc}$ denote the space of all $\Fbb$-progressively measurable processes, $H  = (H_t)_{t \ge 0}$, defined on the enlarged canonical space $\Omb$ and such that
	\b*
		\int_0^t H_s^2 ds ~<~ \infty, ~~\P\mbox{-a.s., for all}~ t > 0 ~\mbox{and}~ \P \in \Pc.
	\e*
	For every $H \in \L^2_{loc}$ and $\P \in \Pc$, the stochastic integral of $H$ w.r.t. the canonical process $B$ under $\P$, denoted by $(H \cdot B)_{\cdot}$, is well-defined. 
	An adapted process $M = (M_t)_{t \ge 0}$ defined on $\Omb$ is called a strong supermartingale under $\P$, if $M_\tau$ is integrable for all $\Fbb$-stopping times $\tau\ge 0$, and for $\Fbb$-stopping times $\tau_1 \le \tau_2$ we have that
	$\E^{\P} \left[ M_{\tau_2}| \Fcb_{\tau_1} \right]~\le ~M_{\tau_1}$.
	We then define $\Hc$ by
	\b*
		\Hc 
		:= 
		\Big\{ H  \in \L^2_{loc} :(H\cdot B)_{\cdot}
			~\mbox{is a strong supermartingale under every $\P\in\Pc$}
		\Big\}.
	\e*
	Next, let $M([0,1])$ denote the space of all finite signed measures on $[0,1]$. 
	Note that $M([0,1])$ is a Polish space under the weak convergence topology.
	Further, let $\Lambda$ denote the space of all $\lambda:\R\to M([0,1])$ admitting the representation
	\b*
		\lambda(x, dt) ~=~ \lambda_0(x,t) \lambdab(dt),
	\e* 
	for some finite positive measure $\lambdab \in M([0,1])$
	and some locally bounded measurable function $\lambda_0: \R \x [0,1] \to \R$.
	For $\mu = (\mu_t)_{t \in [0,1]}$, we define
	\b*
		\Lambda(\mu)
		&:=&
		\Big\{ 
			\lambda \in \Lambda ~: 
			\mu(|\lambda|) := \int_0^1 \int_{\R} \big| \lambda_0 (x,t) \big| \mu_t(dx)\bar\lambda(dt) ~<~\infty
		\Big\},
	\e*
	and
	\bqn \label{eq:quest:price}
		\mu(\lambda)~:=~
		\int_0^1 \lambda(x,dt)\mu_t(dx)~=~\int_0^1\int_\R\lambda_0(x,t)\mu_t(dx)\bar\lambda(dt),
		~~\forall \lambda \in \Lambda(\mu).
	\eqn
	With the notation $\lambda(\overline B):=\int_0^1\lambda_0(B_{T_s}, s)\bar\lambda(ds)$, we finally set
	\bqn \label{eq:Dc_mu}
		\Dc(\mu) ~:=~ \Big\{ (\lambda, H) \in \Lambda(\mu) \x \Hc ~: 
			\lambda\left(\overline B\right) + \big(H\cdot B\big)_{T_1}
			~\ge~
			\Phi\left(\overline B_{\cdot}\right),
			~\Pc\mbox{-q.s.}
		\Big\}.
	\eqn
	The dual problem for the optimal SEP \eqref{eq:main_problem}, under full marginal constraints, is then defined as follows:
	\be \label{eq:dual:sep}
		D(\mu)
		&:=&
		\inf_{(\lambda, H) \in \Dc(\mu)}
		 	\mu(\lambda).
	\ee
	
	Our first main result is the following.
	
	\begin{Theorem} \label{theo:main}
		Let Assumption \ref{ass:mu} hold true. 
		Suppose in addition that $\Phi:\Omb\to\R$ is upper semicontinuous, bounded from above
		and satisfies $\Phi(\om, \theta) = \Phi(\om_{\theta_1 \wedge \cdot}, \theta)$ for all $(\om,\theta) \in \Omb$.
		Then, there exists a solution $\widehat \P \in \Pc(\mu)$ to the problem $P(\mu)$ in \eqref{eq:main_problem}, and
		we have the duality
			\b*
				\E^{\widehat{\P}} \Big[\Phi \big( B_{\cdot}, T_{\cdot} \big) \Big] 
				~~=~~
				P(\mu) 
				&=& D(\mu).
			\e* 			
	\end{Theorem}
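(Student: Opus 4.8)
The plan is to obtain Theorem \ref{theo:main} as a limit of the finite-marginal duality of Guo, Tan and Touzi \cite{GTT}. First I would fix, for each $n \ge 1$, a finite partition $0 = t^n_0 < t^n_1 < \cdots < t^n_{k_n} = 1$ of $[0,1]$, chosen so that the mesh tends to $0$ and (using the càdlàg property of $t \mapsto \mu_t$) the partition points include all atoms of the jump set of $\mu$; let $\mu^n := (\mu_{t^n_i})_{i=0}^{k_n}$ be the corresponding finite family, which is non-decreasing in convex order by Assumption \ref{ass:mu}. For the reward functional $\Phi$, which is bounded above and upper semicontinuous on the Polish space $\Omb$, I would build an approximating sequence $\Phi^n$ depending only on $(B_{\cdot}, (T_{t^n_i})_i)$ — e.g.\ via the natural finite-dimensional restriction/conditioning — such that $\Phi^n$ remains upper semicontinuous, bounded above, and $\limsup_n \Phi^n(\om^n,\theta^n) \le \Phi(\om,\theta)$ whenever $(\om^n,\theta^n) \to (\om,\theta)$ in $\Omb$, while also $P(\mu^n) \to P(\mu)$ along a suitable coupling. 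Applying the finite-marginal result of \cite{GTT} gives, for each $n$, existence of an optimizer $\P^n \in \Pc(\mu^n)$ and the identity $P_n := \E^{\P^n}[\Phi^n] = D_n$, where $D_n$ is the finite-marginal dual value with test functions $\lambda^n$ supported on $\{t^n_i\}$ and integrands $H^n \in \Hc$.

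Next I would pass to the limit on the primal side. The key compactness input is tightness of $(\P^n)_n$ on $\Omb = \Om \times \V^+_r$: the $\Om$-marginals are all Wiener measure (hence trivially tight), and tightness of the $T$-marginals in $\V^+_r$ with the Lévy metric follows because each $T^n$ is non-decreasing, $T^n_0 = 0$, and $\sup_n \E^{\P^n}[\, T^n_1 \,]$ is controlled by $\int_\R x^2 \mu_1(dx) < \infty$ via the optional sampling / Wald identity $\E^{\P^n}[B_{T^n_1}^2] = \E^{\P^n}[T^n_1]$ (using uniform integrability of $B_{\cdot \wedge T^n_1}$). Along a convergent subsequence $\P^n \Rightarrow \widehat\P$, I would verify that $\widehat\P \in \Pc(\mu)$: the Brownian-motion property of $B$ and the uniform integrability of $B_{\cdot \wedge T_1}$ are preserved in the limit (the latter via the uniform $L^2$-bound just mentioned, giving uniform integrability of the family $\{B_{T^n_1}\}$ and hence of the limit), and the marginal constraints $B_{T_t} \sim \mu_t$ for \emph{all} $t \in [0,1]$ follow from those at the partition points together with density of $\bigcup_n \{t^n_i\}$, right-continuity of $t \mapsto \mu_t$ and right-continuity of $\theta \mapsto \theta_t$. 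Upper semicontinuity of $\Phi$, the approximation property of $\Phi^n$, and the Portmanteau theorem then yield $\E^{\widehat\P}[\Phi] \ge \limsup_n \E^{\P^n}[\Phi^n] = \limsup_n P_n = P(\mu)$; since $\widehat\P \in \Pc(\mu)$ gives the reverse inequality, $\widehat\P$ is optimal and $P(\mu) = \lim_n P_n$.

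The harder direction — and the main obstacle — is weak duality together with the limit on the dual side: showing $D(\mu) \le \lim_n D_n$. Given near-optimal finite-marginal dual pairs $(\lambda^n, H^n) \in \Dc(\mu^n)$ with $\mu^n(\lambda^n) \to \lim_n D_n$, the pointwise superhedging inequality $\lambda^n(\overline B) + (H^n \cdot B)_{T^n_{k_n}} \ge \Phi^n(\overline B)$ holds $\Pc(\mu^n)$-q.s., but to produce a genuine element of $\Dc(\mu)$ one must control the $H^n$: there is no a priori compactness for the stochastic integrands, and the supermartingale constraint in $\Hc$ must be preserved. The standard remedy is to avoid passing to the limit in $H^n$ altogether and instead argue abstractly: weak duality $P(\mu) \le D(\mu)$ is proved directly by taking any $\P \in \Pc(\mu)$ and any $(\lambda, H) \in \Dc(\mu)$, integrating the q.s.\ inequality, and using that $(H \cdot B)$ is a strong supermartingale with $(H \cdot B)_0 = 0$ so $\E^\P[(H \cdot B)_{T_1}] \le 0$, while $\E^\P[\lambda(\overline B)] = \mu(\lambda)$ by the marginal constraints and Fubini; this gives $P(\mu) \le D(\mu)$ with no limiting argument. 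For the reverse $D(\mu) \le P(\mu)$, I would instead show $D(\mu) \le \liminf_n D_n$ by embedding each finite-marginal dual element into $\Dc(\mu)$ up to a vanishing error — extending $\lambda^n$ to a measure on $[0,1]$ trivially (it is already supported in $[0,1]$, one only needs $\mu(|\lambda^n|) < \infty$, which holds since $\mu^n(|\lambda^n|) < \infty$) and replacing $\Phi^n$ by $\Phi$ at the cost of $\sup(\Phi - \Phi^n)$, controlled by the uniform approximation — and then combining with $D_n = P_n \to P(\mu)$ and weak duality to conclude $P(\mu) = D(\mu)$ and that the limit is attained. Making the $\Phi^n \to \Phi$ approximation simultaneously compatible with the primal convergence ($\limsup \Phi^n \le \Phi$ in the Portmanteau sense) and with the dual control ($\sup(\Phi - \Phi^n) \to 0$ or at least $\E^{\P^n}[\Phi - \Phi^n] \to 0$) is the delicate technical point; the path-frozen structure $\Phi(\om,\theta) = \Phi(\om_{\theta_1 \wedge \cdot}, \theta)$ in the hypothesis is precisely what makes a finite-dimensional approximation of $\Phi$ well-behaved under the stopping-time embedding and should be exploited there.
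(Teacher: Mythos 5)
Your high-level architecture matches the paper's: finite-marginal duality from \cite{GTT}, tightness of the optimal finite-marginal embeddings on $\Omb$, and passage to the limit. But there are two genuine problems.

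First, the tightness argument you propose is not available under Assumption \ref{ass:mu}. You invoke the Wald identity $\E^{\P^n}[B_{T^n_1}^2] = \E^{\P^n}[T^n_1]$ and a bound by $\int_\R x^2 \mu_1(dx)$, but Assumption \ref{ass:mu} requires only a finite first moment of $\mu_1$ — nothing about second moments. The paper instead uses Monroe's result (Proposition 7 in \cite{Monroe}) that for a minimal stopping time $T_1$, $\P(T_1 \ge \lambda) \le \lambda^{-1/3}\big((\E[|B_{T_1}|])^2 + 1\big)$, which requires only $\E[|B_{T_1}|] = \mu_1(|x|) < \infty$; this is what gives tightness of the $T$-marginals in $\V^+_r$. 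Your route would need the extra hypothesis $\int x^2 \mu_1 < \infty$, so the argument as written is a gap, not just a cosmetic difference.

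Second, and more fundamentally, you identify the hard point — constructing an approximation $\Phi^n$ that simultaneously serves the primal limit and lets you compare $D_n$ with $D(\mu)$ — but you do not resolve it, and the resolution you sketch (controlling $\sup(\Phi - \Phi^n)$ and hoping it vanishes) cannot work in general, because $\Phi$ is merely upper semicontinuous and no uniform approximation by finite-dimensional functionals exists. The paper's Lemma \ref{lem:approx_fcn} constructs a \emph{decreasing} sequence $\Phi_n \searrow \Phi$ of bounded, upper semicontinuous functions, each depending on $(\omega_{\cdot \wedge \theta_1}, \theta_{t^n_1}, \dots, \theta_{t^n_n})$, by first approximating $\Phi$ from above by bounded Lipschitz functions and then discretizing the $\theta$-argument with an additive error $L_k/m$. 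Approximating from above is the crux: with $\Phi_n \ge \Phi$, any superhedge of $\Phi_n$ is automatically a superhedge of $\Phi$, so $\Dc_n(\mu) \subset \Dc(\mu)$ and $D_n(\mu) \ge D(\mu)$ with no error term to control; on the primal side, $\Phi_n \ge \Phi$ together with $\Pc(\mu) \subset \Pc_n(\mu)$ gives $P_n(\mu) \ge P(\mu)$, while monotone convergence ($\Phi_n \searrow \Phi$) and upper semicontinuity of each $\Phi_n$ give $P(\mu) \ge \E^{\widehat\P}[\Phi] = \lim_n \E^{\widehat\P}[\Phi_n] \ge \lim_n \limsup_k \E^{\P_k}[\Phi_n] \ge \limsup_k P_k(\mu)$. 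This closes both limits at once. Your proposal's $\Gamma$-limsup requirement $\limsup_n \Phi^n(\om^n,\theta^n) \le \Phi(\om,\theta)$ is a consequence of the paper's monotone-from-above approximation, but by itself it gives only a one-sided primal estimate; it provides no leverage on the dual side, which is precisely where you acknowledge being stuck. In short, the missing idea is: approximate $\Phi$ from above, not merely in a $\Gamma$-limsup sense.

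Your treatment of weak duality $P(\mu) \le D(\mu)$ via the strong-supermartingale property of $(H\cdot B)$, and of the verification $\widehat\P \in \Pc(\mu)$ (Brownian motion property, uniform integrability of $B_{\cdot\wedge T_1}$, marginal constraints via right-continuity of $t\mapsto\mu_t$) are in the right spirit and essentially match Lemma \ref{lemm:tight}, though the uniform-integrability step in your write-up again leans on the unavailable $L^2$ bound; the paper verifies it directly from the first-moment condition on $\mu_1$.
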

	
	We also introduce the weaker version of the dual problem:

	\be \label{eq:dual:sep2}
			D_0(\mu)
			&:=&
			\inf_{(\lambda, H) \in \Dc_0(\mu)}
		 	\mu(\lambda),
	\ee
	with $\Dc_0(\mu)$ given by 
	\bq \label{eq:dual_strat_set0}
		\Dc_0(\mu) := \Big\{ (\lambda, H) \in \Lambda(\mu) \x \Hc ~: 
			\lambda\left(\overline B\right) + \big(H\cdot B\big)_{T_1}
			\ge
			\Phi\left(\overline B_{\cdot}\right),
			~\Pc(\mu)\mbox{-q.s.}
		\Big\}.~~~
	\eq
	
	As a consequence of Theorem \ref{theo:main}, we have the following result. 
	
	\begin{Corollary} \label{cor:main}			
		Under the same conditions as in Theorem \ref{theo:main}, it holds that
		\b*
			P(\mu) ~=~ D_0(\mu) ~=~ D(\mu).
		\e* 			
	\end{Corollary}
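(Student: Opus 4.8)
The plan is to establish the chain of inequalities $P(\mu) \le D_0(\mu) \le D(\mu)$ and then close the loop using Theorem \ref{theo:main}, which already gives $P(\mu) = D(\mu)$. The middle inequality $D_0(\mu) \le D(\mu)$ is immediate from the definitions: since $\Pc(\mu) \subseteq \Pc$, any constraint that holds $\Pc$-q.s. holds a fortiori $\Pc(\mu)$-q.s., so $\Dc(\mu) \subseteq \Dc_0(\mu)$, and taking infima over a larger set of feasible $(\lambda, H)$ can only decrease the value. Hence $D_0(\mu) \le D(\mu)$.

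The substantive step is weak duality for the relaxed dual, namely $P(\mu) \le D_0(\mu)$. First I would fix an arbitrary $\P \in \Pc(\mu)$ and an arbitrary feasible pair $(\lambda, H) \in \Dc_0(\mu)$. Taking $\P$-expectations in the $\Pc(\mu)$-q.s. inequality $\lambda(\overline B) + (H\cdot B)_{T_1} \ge \Phi(\overline B_\cdot)$ — which is legitimate precisely because $\P \in \Pc(\mu)$ — gives
\bq
	\E^{\P}\big[ \Phi(\overline B_\cdot) \big] ~\le~ \E^{\P}\big[ \lambda(\overline B) \big] + \E^{\P}\big[ (H\cdot B)_{T_1} \big].
\eq
For the first term on the right, since $B_{T_t} \sim^{\P} \mu_t$ for every $t \in [0,1]$ and $\lambda \in \Lambda(\mu)$, Fubini's theorem (justified by the integrability $\mu(|\lambda|) < \infty$) yields $\E^{\P}\big[\lambda(\overline B)\big] = \int_0^1 \int_{\R} \lambda_0(x,t)\, \mu_t(dx)\, \bar\lambda(dt) = \mu(\lambda)$. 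For the second term, since $H \in \Hc$ the process $(H\cdot B)_\cdot$ is a strong supermartingale under $\P$, and $T_1$ is an $\Fbb$-stopping time, so $\E^{\P}\big[(H\cdot B)_{T_1}\big] \le (H\cdot B)_0 = 0$. Combining, $\E^{\P}\big[\Phi(\overline B_\cdot)\big] \le \mu(\lambda)$. Taking the supremum over $\P \in \Pc(\mu)$ and then the infimum over $(\lambda, H) \in \Dc_0(\mu)$ gives $P(\mu) \le D_0(\mu)$.

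Finally, assembling the pieces: we have shown $P(\mu) \le D_0(\mu) \le D(\mu)$, while Theorem \ref{theo:main} gives $P(\mu) = D(\mu)$. Therefore all three quantities coincide, which is the claim. The only delicate point in the argument is the supermartingale/stopping-time bound $\E^{\P}[(H\cdot B)_{T_1}] \le 0$; this requires knowing that $(H\cdot B)_\cdot$ is a \emph{strong} supermartingale (so that the optional-stopping-type inequality applies at the possibly unbounded stopping time $T_1$ without an additional uniform integrability argument), which is exactly how $\Hc$ was defined. One should also double-check that $\E^{\P}[\Phi(\overline B_\cdot)]$ is well-defined — it is bounded above by hypothesis on $\Phi$, and bounded below by $\lambda(\overline B) + (H\cdot B)_{T_1}$ which is $\P$-integrable, so $\E^{\P}[\Phi(\overline B_\cdot)] \in [-\infty, \infty)$ and the inequality is meaningful. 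Everything else is a routine application of the definitions.
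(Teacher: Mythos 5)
Your proof is correct and follows essentially the same route as the paper: weak duality $P(\mu)\le D_0(\mu)$ by taking expectations under an arbitrary $\P\in\Pc(\mu)$, the inclusion $\Dc(\mu)\subseteq\Dc_0(\mu)$ giving $D_0(\mu)\le D(\mu)$, and closure via Theorem \ref{theo:main}. You merely unpack the details (Fubini on $\lambda(\overline B)$, the strong-supermartingale bound at $T_1$) that the paper states in one line.
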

	
	\begin{proof}
		Let $(\lambda, H) \in \Dc_0(\mu)$. For any $\P \in \Pc(\mu)$, taking expectation over the inequality in the definition of $\Dc_0(\mu)$, one obtains $\mu(\lambda) \ge \E^{\P}\big[ \Phi(B_{\cdot}, T_{\cdot}) \big]$. Hence, $\mu(\lambda) \ge P(\mu)$, which yields the weak duality $ D_0(\mu) \ge P(\mu)$. 
		Since $\Pc(\mu)\subseteq \Pc$, it follows that $D_0(\mu)\le D(\mu)$. In consequence, the result follows from Theorem \ref{theo:main}.
	\qed
	\end{proof}

\subsection{Maximum maximum given full marginals}
\label{subsec:MMfullM}

	In this subsection, we restrict to the case where
	\be \label{eq:Phi_phi}
		\Phi(\om,\theta) &=& \phi \big(\om^*_{\theta_1} \big),
		~~\mbox{with}~
		\om^*_{t} := \max_{0 \le s \le t} \om_s,
		~~t\ge 0,
	\ee
	for some bounded, non-decreasing and upper semi-continuous (or equivalently c\`adl\`ag) function $\phi: \R_+ \to \R$.
	According to Theorem \ref{theo:main} and Lemma \ref{lem:cont} below, we have the duality $P(\mu) = D(\mu)$.
	Our main concern in this part is to compute this optimal value and, in turn, find and characterize the solution to the dual problem \eqref{eq:dual:sep2}.

	First, we introduce some further conditions on the marginals $\mu$.
	To this end, let $c(t,x) := \int_\R (y-x)^+ \mu_t(dy)$ for every $(t,x) \in [0,1] \x \R$.
	\begin{Assumption}\label{ass:A}
		\rmi The function $c$ is differentiable in $t$ and the derivative function $\partial_t c$ is continuous, i.e. $\partial_t c(t,x) \in C([0,1] \x \R)$.
		
		\vspace{1mm}

		\noindent \rmii
		There exists a sequence of discrete time grids $(\pi_n)_{n \ge 1}$ with
		$\pi_n = (0 =t^n_0 < t^n_1 < \cdots <t^n_n = 1)$,
		such that $|\pi_n| \to 0$,
		and, for all $n\ge 1$, the family of finite marginals 
		$(\mu_{t^n_i})_{i=1}^n$ satisfies Assumption $\circledast$ in \cite{OS}. 
	\end{Assumption}

	We introduce a minimization problem for every fixed $m \ge 0$. With the convention that $\frac{0}{0} = 0$ and $\frac{c}{0} = \infty$ for $c >0$, let
	\be \label{eq:maxzeta}
		C(m)
		&:=&
		\inf_{\zeta \in \V^+_l:~\zeta\;\le\; m}~
		\left\{~ \frac{c(0, \zeta_0)}{m - \zeta_0}
			~+~
			\int_0^1 \frac{\partial_t c(s, \zeta_s)}{m - \zeta_s} ds
		~\right\}.
	\ee
	Our first result is on the value of the optimal SEP \eqref{eq:main_problem}.
	\begin{Theorem} \label{them:main2}
		Let $\Phi$ be given by \eqref{eq:Phi_phi} for some bounded, 
		non-decreasing and c\`adl\`ag function $\phi$.
		Suppose that Assumptions \ref{ass:mu} and \ref{ass:A} \rmi hold true.
		Then 
		\be \label{eq:OptValue_Ineq}
			P(\mu)
			~=~
			D(\mu)
			&\le&
			\phi(0)+\int_0^\infty C(m)d\phi(m).
		\ee
		Suppose in addition that Assumption \ref{ass:A} \rmii holds true, then equality holds in \eqref{eq:OptValue_Ineq}.	
	\end{Theorem}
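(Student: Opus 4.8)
The plan is to prove the two assertions of Theorem \ref{them:main2} separately: the inequality \eqref{eq:OptValue_Ineq} by exhibiting an admissible dual strategy whose cost equals the right-hand side, and the reverse inequality (under Assumption \ref{ass:A}(ii)) by a limiting argument along the grids $\pi_n$ using the known finite-marginal results of Ob{\l}{\'o}j--Spoida \cite{OS} and Henry-Labord\`ere--Ob{\l}{\'o}j--Spoida--Touzi \cite{HOST}.

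For the upper bound, I would first reduce to the case where $\phi$ is an indicator: since $\phi$ is bounded, non-decreasing and c\`adl\`ag, write $\phi(\cdot) = \phi(0) + \int_0^\infty \1_{[m,\infty)}(\cdot)\, d\phi(m)$, so that by linearity of the dual problem it suffices to construct, for each fixed barrier level $m \ge 0$, a dual pair $(\lambda^m, H^m) \in \Dc(\mu)$ superhedging $\om \mapsto \1_{\{\om^*_{\theta_1} \ge m\}}$ at cost $C(m)$; then integrate in $m$ and add the trivial hedge $\phi(0)$ for the constant. For a fixed $m$ and a candidate barrier function $\zeta \in \V^+_l$ with $\zeta \le m$, the natural superhedge of a digital-on-the-maximum is the Az\'ema--Yor-type strategy: hold, at each time $s$, a static payoff $\frac{(B_{T_s} - \zeta_s)^+}{m - \zeta_s}$ in the marginal $\mu_s$ — whose price is exactly $\frac{c(s,\zeta_s)}{m-\zeta_s}$ — together with the dynamic hedging term obtained by ``rolling'' these positions, i.e. a stochastic integral $(H^m \cdot B)$ that telescopes the infinitesimal differences $\frac{\partial_t c(s,\zeta_s)}{m-\zeta_s}\,ds$. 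The pathwise inequality $\lambda^m(\overline B) + (H^m \cdot B)_{T_1} \ge \1_{\{B^*_{T_1} \ge m\}}$ is the continuous-marginal analogue of the finite-marginal superhedge in \cite{HOST} and should follow from the elementary fact that $\frac{(x-\zeta)^+}{m-\zeta} \ge \1_{\{x \ge m\}}$ together with a bookkeeping of the self-financing dynamic term; Assumption \ref{ass:A}(i) (continuity of $\partial_t c$) is what makes the integral $\int_0^1 \frac{\partial_t c(s,\zeta_s)}{m-\zeta_s}\,ds$ and the corresponding stochastic integral well-defined and the supermartingale property hold, so that $(\lambda^m,H^m) \in \Dc(\mu)$. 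Taking the infimum over $\zeta$ gives $D(\mu) \le \phi(0) + \int_0^\infty C(m)\, d\phi(m)$, and $P(\mu) = D(\mu)$ comes from Theorem \ref{theo:main} combined with Lemma \ref{lem:cont} (which verifies the required upper semicontinuity of $\Phi$ in \eqref{eq:Phi_phi}).

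For the matching lower bound under Assumption \ref{ass:A}(ii), I would exploit the finite-marginal problem along the grid $\pi_n = (t^n_0 < \dots < t^n_n)$. By the hypothesis that $(\mu_{t^n_i})_{i=1}^n$ satisfies Assumption $\circledast$ of \cite{OS}, the iterated Az\'ema--Yor embedding of Ob{\l}{\'o}j--Spoida solves the finite-marginal optimal SEP for the max-payoff, and \cite{HOST} identifies the optimal value $P_n$ as a discrete minimization of exactly the same form as \eqref{eq:maxzeta} but with the integral replaced by a Riemann-type sum $\sum_i \frac{c(t^n_i,\zeta_{t^n_i}) - c(t^n_{i-1},\zeta_{t^n_i})}{m-\zeta_{t^n_i}}$ over step functions $\zeta$ constant on the grid. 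Since $\Pc(\mu) \subseteq \Pc((\mu_{t^n_i})_{i=1}^n)$ (a full-marginal embedding in particular embeds the finitely many marginals on the grid), we have $P(\mu) \le P_n$ for every $n$; hence it suffices to show $\liminf_n P_n \le \phi(0) + \int_0^\infty C(m)\, d\phi(m)$, equivalently (again reducing to indicators) $\limsup_n C_n(m) \le C(m)$ for each $m$, where $C_n(m)$ is the discretized minimization. This last step is a convergence-of-minimizers argument: given a near-optimal $\zeta$ for $C(m)$, discretize it on $\pi_n$ (e.g. $\zeta^n_s := \zeta_{t^n_i}$ for $s \in (t^n_{i-1}, t^n_i]$) and use Assumption \ref{ass:A}(i) — uniform continuity of $\partial_t c$ on compacts — together with $|\pi_n| \to 0$ to show the Riemann sums converge to $\int_0^1 \frac{\partial_t c(s,\zeta_s)}{m-\zeta_s}\,ds$; one must take a little care near the endpoint where $\zeta_s \to m$, handling the $\frac{c}{0} = \infty$ convention by first restricting to $\zeta$ bounded away from $m$ and then passing to the limit, which is legitimate since such $\zeta$ are dense in the relevant sense for the infimum in \eqref{eq:maxzeta}.

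I expect the main obstacle to be the convergence step, and within it two delicate points: first, making precise the claim $P(\mu) = \lim_n P_n$ rather than merely $P(\mu) \le \inf_n P_n$ — one needs the finite-marginal optimal values to decrease (or at least converge) as the grid refines, which relies on the compatibility of the iterated Az\'ema--Yor constructions under refinement and is exactly where Assumption \ref{ass:A}(ii) and the structural results of \cite{OS}, \cite{HOST} do the real work; and second, the uniform control of the denominators $m - \zeta_s$ near $s = 1$, where the optimal barrier may approach $m$, so that neither $C_n(m)$ nor $C(m)$ is attained at an interior-valued $\zeta$ and the interchange of the infimum over $\zeta$ with the limit in $n$ needs justification. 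A clean way to organize the latter is to prove lower semicontinuity of $\zeta \mapsto \frac{c(0,\zeta_0)}{m-\zeta_0} + \int_0^1 \frac{\partial_t c(s,\zeta_s)}{m-\zeta_s}\,ds$ on $\V^+_l$ (with values in $(-\infty,\infty]$) and an equicoercivity estimate, then invoke a standard $\Gamma$-convergence / epiconvergence argument for $C_n(m) \to C(m)$; the boundedness and non-negativity of $\partial_t c$ (inherited from $\mu$ being increasing in convex order) and the uniform bound from $\phi$ being bounded keep everything finite.
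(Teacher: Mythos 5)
Your proposal diverges from the paper on both halves, and there is one genuine logical gap.

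For the inequality $D(\mu)\le\phi(0)+\int_0^\infty C(m)\,d\phi(m)$, you propose to exhibit a full-marginal dual pair $(\lambda^m,H^m)\in\Dc(\mu)$ directly. This is a genuinely different route from the paper, which instead passes to the limit: it uses that $P_n(\mu)=D_n(\mu)\le\phi(0)+\int_0^\infty C_n(m)\,d\phi(m)$ for each $n$ (cf.\ \eqref{eq:DnCn}), that $\lim_n P_n(\mu)=P(\mu)=D(\mu)$ (from the proof of Theorem \ref{theo:main}, together with Lemma \ref{lem:cont} for upper semicontinuity of $\Phi$), and that $C_n(m)\to C(m)$ (Lemma \ref{lemm:C_limit}) with $C_n$ decreasing so that the interchange of integration and limit is just monotone convergence. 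Your direct-construction route is not wrong in spirit, but it is considerably heavier than you indicate: verifying that the rolled Az\'ema--Yor hedge actually lies in $\Dc(\mu)$ --- i.e.\ that the pathwise superhedging inequality holds $\Pc$-q.s.\ and $(H^m\cdot B)$ is a strong supermartingale --- is precisely the content of the separate Theorem \ref{theo:optimser:dual}, whose proof needs extra hypotheses (atomless $\mu_t$, continuity of $\hat\zeta^m_t$ and its inverse in $m$, the integrability condition \eqref{eq:assump_zeta}) that Theorem \ref{them:main2} does \emph{not} assume. So your approach, if carried out rigorously, would prove a weaker statement.

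For the equality under Assumption \ref{ass:A}(ii), the reduction as written is a non sequitur. From $P(\mu)\le P_n$ and ``$\liminf_n P_n\le\phi(0)+\int_0^\infty C(m)\,d\phi(m)$'' you can only deduce $P(\mu)\le\phi(0)+\int_0^\infty C(m)\,d\phi(m)$, which is the upper bound you already have; it says nothing about the desired lower bound $P(\mu)\ge\phi(0)+\int_0^\infty C(m)\,d\phi(m)$. The correct chain is: $P(\mu)=\lim_n P_n(\mu)$ (established by the tightness/weak-convergence argument in the proof of Theorem \ref{theo:main}, Lemma \ref{lemm:tight} and \eqref{eq:limitPn_proof} --- \emph{not} by any compatibility of iterated Az\'ema--Yor embeddings under grid refinement, so your ``first delicate point'' is actually already resolved and does not hinge on Assumption $\circledast$), and under Assumption \ref{ass:A}(ii) each $P_n(\mu)=\phi(0)+\int_0^\infty C_n(m)\,d\phi(m)$ by \eqref{eq:MMM_n}; then $C_n\searrow C$ and monotone convergence finish. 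Your step ``$\limsup_n C_n(m)\le C(m)$'' is the right key fact (and since $C_n(m)\ge C(m)$ automatically, it gives $C_n(m)\to C(m)$), but it must be fed into $P(\mu)=\lim_n P_n$, not into the inequality $P(\mu)\le P_n$. Once the logic is repaired this way, the rest of your plan (discretizing a near-optimal $\zeta$, using uniform continuity of $\partial_t c$ on compacts, handling the boundary $\zeta\uparrow m$ by restricting first to barriers bounded away from $m$) is essentially the content of Lemma \ref{lemm:C_limit} and works.
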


	Our second result is on the existence and characterization of a specific dual optimizer.	
	To this end, for $m>0$ and $\zeta\in\V^+_l$ such that $\zeta(1)<m$, let the associated functions $\lambda^{\zeta,m}_c$ and $\lambda^{\zeta,m}_d$ be given by
	\b*
		\lambda^{\zeta,m}_c (x,t) 
		&:=&
		\frac{m-x}{(m-\zeta_t)^2}
		\1_{\{x\ge \zeta_t\}}
		\1_{D_m^c}(t),
	\e*
	and
	\b*
		\lambda^{\zeta,m}_d (x,t)
		&:=&
		\frac{1}{\Delta \zeta_t}\left(\frac{(x-\zeta_t )^+}{m-\zeta_t }-\frac{(x-\zeta_{t+})^+}{m-\zeta_{t+}}\right)
		\1_{D_m}(t)
		~+~
		\frac{(x-\zeta_1 )^+}{m-\zeta_1} \1_{\{t = 1\}}
		,
	\e* 
	where $\Delta \zeta_t := \zeta_{t+} - \zeta_t$, 
	$D_m := \{t\in[0,1): \Delta \zeta_t > 0 \}$
	and $D_m^c := [0,1) \setminus D_m$. We then define
	\bqn \label{eq:lambda_m}
		\lambda^{\zeta,m} (x,dt)
		~:=~\left(
		\lambda^{\zeta,m}_c (x,t)
		+\lambda^{\zeta,m}_d (x,t)
		\right) d\zeta_t.
	\eqn 
	It is clear that $\lambda^{\zeta,m} \in \Lambda$.
	Next, we define the dynamic term. Let $\tau_m(\bar\omega)=\inf\{t\ge 0:\omega_{\theta(t)}\ge m\}$. Further, let $\theta^{-1}:\R_+\to[0,1]$ the right continuous inverse function of $s \mapsto \theta_s$, given by 
	$\theta^{-1}_s:= \sup\{r\in[0,1]:\theta_r\le s\}$. 
	We note that $\theta^{-1}_s(\bar\omega)$ is $\mathcal{\overline F}_s$-measurable for fixed $s$ 
	and, thus, it is c\`adl\`ag and $\Fbb$-adapted and therefore $\Fbb$-progressively measurable. 
	With $I^-:\R_+\to\R_+$ and $I^+:\R_+\to\R_+$ given, respectively, by $I^-(s):=\theta(\theta^{-1}(s)-)$ and $I^+(s):=\theta(\theta^{-1}(s))$, we let
		\bqn\label{eq:dyn_strat_n}
			H^{\zeta,m}_s(\omega,\theta)~:=~
			\frac{\1_{\left[\tau_m,I^+(\tau_m)\right]}(s)}{m-\zeta_{\theta^{-1}(\tau_m)}}
			~+~
			\frac{\1_{\big\{m \le \omega^*_{I^-(s)};\zeta_{\theta^{-1}(s)} \le \omega_{I^-(s)}\big\}}}
			{m - \zeta_{\theta^{-1}(s)}}.
		\eqn  
	Finally, let $\zeta:[0,1)\times\R_+\to\R$ such that, for all $m> 0$, $\zeta^m_\cdot\in\V^+_l$ and $\zeta^m_1\le m$, where $\zeta^m_\cdot = \zeta(\cdot,m)$.	
	Assuming that $\int_0^{\infty} \frac{d\phi(m)}{(m - \zeta^m_1)^2}< \infty$, we then define
	\bqn \label{eq:def_lambda}
		\lambda^\zeta(x,dt) := \int_0^\infty \lambda^{\zeta^m,m} (x,dt)d\phi(m)
		~~\mbox{and}~~
		H^\zeta_s := \int_0^\infty H^{\zeta^m,m}_s d\phi(m).
	\eqn

	The construction of the dual optimizer below is based on the existence of a solution to the minimization problem \eqref{eq:maxzeta}.
	\begin{Lemma}\label{lemm:Zeta}
		Let Assumptions \ref{ass:mu} and \ref{ass:A} (i) hold true.
		Then there exists a measurable function $\hat\zeta:[0,1)\times\R_+\to\R$ 
		such that, for all $m> 0$, $\hat\zeta^m_\cdot\in\V^+_l$ is a solution to \eqref{eq:maxzeta}.
	\end{Lemma}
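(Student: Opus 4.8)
The plan is to establish, for each fixed $m>0$, that the infimum defining $C(m)$ in \eqref{eq:maxzeta} is attained, and then to select such minimisers measurably in the parameter $m$. Throughout write $J_m(\zeta):=g^0_m(\zeta_0)+\int_0^1 g_m(s,\zeta_s)\,ds$ for the objective in \eqref{eq:maxzeta}, where $g^0_m(x):=c(0,x)/(m-x)$ and $g_m(s,x):=\partial_t c(s,x)/(m-x)$, with the conventions $0/0=0$ and $c/0=\infty$ for $c>0$. Since $\mu$ is non-decreasing in convex order we have $\partial_t c\ge0$ (and it is jointly continuous by Assumption~\ref{ass:A}(i)); since $\mu_0=\delta_{\{0\}}$ we have $c(0,x)=(-x)^+$, so that $g^0_m$ is continuous and non-increasing on $(-\infty,m]$, vanishes on $[0,m]$, and converges to $1$ as $x\to-\infty$.

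The first step is the a priori bound $C(m)<1$ for every $m>0$. Testing \eqref{eq:maxzeta} with a constant path $\zeta\equiv a$ and using $\int_0^1\partial_t c(s,a)\,ds=c(1,a)-c(0,a)$ together with $\int_\R y\,\mu_1(dy)=0$, one finds for $a<0$ that $J_m(\zeta\equiv a)=c(1,a)/(m-a)=1+(\tilde c(1,a)-m)/(m-a)$, where $\tilde c(1,a):=\int_\R(a-y)^+\mu_1(dy)\downarrow0$ as $a\to-\infty$; hence $J_m(\zeta\equiv a)<1$ for $a$ sufficiently negative. Consequently, since $g^0_m(\zeta_0)\le J_m(\zeta)$ and $g^0_m$ is non-increasing with limit $1$ at $-\infty$, every minimising sequence $(\zeta^k)_k$ eventually takes values in a set of the form $K(m):=\{\zeta\in\V^+_l:\ a(m)\le\zeta\le m\}$ with $a(m)>-\infty$. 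Such a family of uniformly bounded non-decreasing functions is compact (Helly's selection theorem), so a subsequence converges in $\V^+_l$ to some $\zeta\in K(m)$; in particular $\zeta^k_s\to\zeta_s$ at every continuity point of $\zeta$, hence Lebesgue-a.e., and $\zeta$ may be chosen with $\zeta_0\ge\lim_k\zeta^k_0$.

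The second step is lower semicontinuity of $J_m$ on $K(m)$. For the integral term the integrands $g_m(\cdot,\zeta^k_\cdot)$ are non-negative, and by joint continuity of $\partial_t c$ together with the conventions above one has $\liminf_k g_m(s,\zeta^k_s)\ge g_m(s,\zeta_s)$ for a.e.\ $s$ (clear where $\zeta_s<m$; where $\zeta_s=m$ the left side is $+\infty$ if $\partial_t c(s,m)>0$ and is $\ge0=g_m(s,m)$ otherwise), so Fatou's lemma gives $\liminf_k\int_0^1 g_m(s,\zeta^k_s)\,ds\ge\int_0^1 g_m(s,\zeta_s)\,ds$. For the boundary term, monotonicity of $g^0_m$ and $\zeta_0\ge\lim_k\zeta^k_0$ give $g^0_m(\zeta_0)\le\lim_k g^0_m(\zeta^k_0)$. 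Hence $J_m(\zeta)\le\liminf_k J_m(\zeta^k)=C(m)$, and since $\zeta\in K(m)$ this forces $J_m(\zeta)=C(m)$: a minimiser exists. Furthermore $J_m$ is \emph{modular}, $J_m(\zeta\wedge\zeta')+J_m(\zeta\vee\zeta')=J_m(\zeta)+J_m(\zeta')$, because each coordinate $\zeta_s$ enters $J_m$ only through its own summand; hence the minimiser set $\Gamma(m)$ is a sublattice of $\V^+_l$, and being also compact (a closed subset of $K(m)$, by lower semicontinuity) it possesses a pointwise-smallest element.

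It remains to choose minimisers measurably in $m$. Here $(m,\zeta)\mapsto J_m(\zeta)$ is Borel (being assembled from the jointly continuous $c$ and $\partial_t c$, from integration in $s$, and from the extended-real quotient with the stated conventions), while $m\mapsto C(m)$ is universally measurable: writing $C(m)=\lim_{R\to\infty}\inf\{J_m(\zeta):\zeta\in\V^+_l,\ -R\le\zeta\le m\}$, the measurable projection theorem shows each term of this monotone limit to be universally measurable. Therefore the minimiser multifunction $m\mapsto\Gamma(m)=\{\zeta\in\V^+_l:\zeta\le m,\ J_m(\zeta)\le C(m)\}$ has non-empty compact values and analytic graph; setting $\hat\zeta^m_t:=\inf_{\zeta\in\Gamma(m)}\zeta_t$ produces the pointwise-smallest minimiser (well-defined and again a minimiser, by the sublattice property and lower semicontinuity), and $m\mapsto\hat\zeta^m_t$ is universally measurable for each $t$ since $\{m:\hat\zeta^m_t<r\}$ is the projection of an analytic set — alternatively one invokes the Jankov--von Neumann selection theorem directly. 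Since the Borel $\sigma$-field of $\V^+_l$ is generated by the evaluation maps, $(t,m)\mapsto\hat\zeta^m_t$ is then jointly measurable, giving the desired $\hat\zeta:[0,1)\times\R_+\to\R$; by construction $\hat\zeta^m_\cdot\in\V^+_l$ with $\hat\zeta^m\le m$ and $J_m(\hat\zeta^m)=C(m)$ for every $m>0$. The main obstacle is precisely this last step: because $J_m(\cdot)$ is only lower semicontinuous and the constraint set moves with $m$, one cannot reduce $C(\cdot)$ to a countable infimum of continuous functions and must instead argue through analytic sets and a von Neumann-type selection; the remaining points — the constant-path bound and the compactness/semicontinuity bookkeeping on $\V^+_l$ under the Lévy metric — are comparatively routine.
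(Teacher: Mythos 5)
Your proof is correct and follows the same overall line as the paper's: establish $C(m)<1$ via constant test functions, use this to confine minimising sequences to a compact subset of $\V^+_l$, invoke lower semicontinuity of the objective to extract a minimiser, and finish with a measurable selection in $m$. Two places where your write-up is actually more careful than the paper's. First, the paper asserts that $\Psi_m$ is continuous and that $\V^+_l([0,1),[K,m))$ is compact under the L\'evy metric; strictly speaking the half-open set is not compact (a Helly limit may touch $m$), and $\zeta\mapsto\Psi_m(\zeta)$ need not be continuous as $\zeta_s\uparrow m$ when $\partial_tc(s,m)>0$. Your replacement of this by lower semicontinuity on the closed set $[K,m]$, handled via Fatou and the extended-real conventions $0/0=0$, $c/0=\infty$, is the right repair and yields the same conclusion. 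Second, the paper concludes with ``it is enough to use a measurable selection argument'' without specifying one; your modularity observation — that $J_m(\zeta\wedge\zeta')+J_m(\zeta\vee\zeta')=J_m(\zeta)+J_m(\zeta')$, hence $\Gamma(m)$ is a compact sublattice with a pointwise-smallest element — gives a canonical selection, and the analytic-set/Jankov--von~Neumann reasoning then delivers measurability in $m$. This is a genuine, if modest, improvement: it produces an explicit measurable choice rather than merely asserting existence, and it sidesteps the (otherwise necessary) discussion of whether $\Gamma(m)$ has closed graph. The rest of the argument (the $C(m)<1$ bound via constants, the reduction to $K(m)$ via monotonicity of $g^0_m$ and $\zeta_s\ge\zeta_0$) matches the paper's.
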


	\begin{Theorem} \label{theo:optimser:dual}
		Suppose that $\phi: \R_+ \to \R$ is non-decreasing and that Assumptions \ref{ass:mu} and \ref{ass:A} hold true.
		Let $\hat\zeta:[0,1)\times\R_+\to\R$ be a measurable function such that, for all $m> 0$, $\hat\zeta^m_\cdot\in\V^+_l$ is a solution to \eqref{eq:maxzeta}, and
		\be	\label{eq:assump_zeta}
			\int_0^{\infty} \frac{d\phi(m)}{(m - \hat \zeta^m_1)^2} ~~<~~ \infty.
		\ee
		Then, $(\hat\lambda,\widehat H):=(\lambda^{\hat\zeta},H^{\hat\zeta}) \in\Lambda(\mu)\times\Hc$. 
		Suppose in addition that $\phi:\R_+ \to \R$ is bounded and continuous and that, for all $t\in[0,1]$,
		\bqn \label{eq:dual_opti_condi}
			\mu_t
			~\mbox{is atomless;}~~
			\hat\zeta^m_t
			~\mbox{and its inverse are both continuous in}~m.
		\eqn
		Then $(\hat\lambda,\widehat H)$ is a dual optimizer for the problem $D_0(\mu)$ in \eqref{eq:dual:sep2}. 
		That is, with $\Phi$ given in \eqref{eq:Phi_phi}, it holds that
		\bqn \label{eq:superhedge}
			\mu(\hat\lambda)=D_0(\mu)
			~~\mbox{and}~~
			\hat\lambda(\overline B)~+~\big(\widehat H\cdot B\big)_{T_1}
			~\ge~
			\Phi(B_{\cdot},T_{\cdot}),\quad \Pc(\mu)\emph{\mbox{-q.s.}}
		\eqn
	\end{Theorem}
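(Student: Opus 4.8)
The plan is to reduce the statement to a single barrier level $m>0$ and then integrate against $d\phi(m)$, using the layer--cake identity $\phi(B^*_{T_1})=\phi(0)+\int_{(0,\infty)}\1_{\{B^*_{T_1}\ge m\}}\,d\phi(m)$, where $B^*_t:=\max_{s\le t}B_s$ and the constant $\phi(0)$ is carried through the $m=0$ layer. Concretely, for each fixed $m>0$ I would establish: (a) $\lambda^{\hat\zeta^m,m}\in\Lambda(\mu)$ and $H^{\hat\zeta^m,m}\in\Hc$; (b) the pathwise superhedge $\lambda^{\hat\zeta^m,m}(\overline B)+(H^{\hat\zeta^m,m}\cdot B)_{T_1}\ge\1_{\{B^*_{T_1}\ge m\}}$, $\Pc$-q.s.; and (c) the identity $\mu(\lambda^{\hat\zeta^m,m})=\frac{c(0,\hat\zeta^m_0)}{m-\hat\zeta^m_0}+\int_0^1\frac{\partial_t c(s,\hat\zeta^m_s)}{m-\hat\zeta^m_s}\,ds$, which equals $C(m)$ because $\hat\zeta^m$ is a minimizer in \eqref{eq:maxzeta}. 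Integrating (a)--(c) in $m$ --- ordinary Fubini for the static parts, a stochastic Fubini theorem for the dynamic parts, both legitimate by (a) and \eqref{eq:assump_zeta} --- then yields $(\hat\lambda,\widehat H)\in\Lambda(\mu)\times\Hc$, the superhedging inequality in \eqref{eq:superhedge}, and $\mu(\hat\lambda)=\phi(0)+\int_0^\infty C(m)\,d\phi(m)$. By Theorem \ref{them:main2} (whose Assumption \ref{ass:A}(ii) is in force) this last quantity equals $D(\mu)$, and by Corollary \ref{cor:main} it equals $D_0(\mu)$; combined with $(\hat\lambda,\widehat H)\in\Dc_0(\mu)$ from (b), this gives $\mu(\hat\lambda)=D_0(\mu)$, i.e. dual optimality.

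For (a): I would bound $\mu(|\lambda^{\hat\zeta^m,m}|)$ by a constant multiple of $(m-\hat\zeta^m_1)^{-2}\int_\R|y|\,\mu_1(dy)$, using the explicit forms of $\lambda^{\hat\zeta^m,m}_c,\lambda^{\hat\zeta^m,m}_d$, the monotonicity of $\hat\zeta^m$, convex ordering and $c(t,x)\le\int_\R|y|\,\mu_t(dy)$; then \eqref{eq:assump_zeta} gives $\hat\lambda\in\Lambda(\mu)$. For $\widehat H$: each $H^{\hat\zeta^m,m}$ is nonnegative and, on finite horizons, bounded by $(m-\hat\zeta^m_{\theta^{-1}(\cdot)})^{-1}$, so $\widehat H\in\L^2_{loc}$; moreover $(H^{\hat\zeta^m,m}\cdot B)_{\cdot\wedge T_1}$ is a local martingale bounded from below by a uniformly integrable martingale --- the European component of the Az\'ema--Yor hedge read off from (b) --- hence a supermartingale under every $\P\in\Pc$, and integrating in $m$ preserves this, so $\widehat H\in\Hc$. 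This part uses only Assumptions \ref{ass:mu}, \ref{ass:A} and \eqref{eq:assump_zeta}, matching the first assertion of the theorem.

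The crux is (b), the iterated Az\'ema--Yor pathwise superhedge. Writing $X_t:=B_{T_t}$ for the observed martingale, one has $\{B^*_{T_1}\ge m\}=\{\tau_m\le T_1\}$. I would verify that $H^{\hat\zeta^m,m}$ holds no position before $\tau_m$ and afterwards holds $(m-\hat\zeta^m_{\theta^{-1}(\cdot)})^{-1}$ units of $B$ exactly while the observed path stays above the moving barrier $\hat\zeta^m$ --- the first term of \eqref{eq:dyn_strat_n} absorbing the $\theta$-jump straddling $\tau_m$ --- and then combine the resulting gains with the European payoffs $\lambda^{\hat\zeta^m,m}_c$ (a call-type weighting in the strike) and $\lambda^{\hat\zeta^m,m}_d$ (which cancels the jumps of $\hat\zeta^m$ by telescoping), closing the estimate by a case analysis on whether $\tau_m\le T_1$ and on the position of $X_1$ relative to $\hat\zeta^m_1$. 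Atomlessness of the $\mu_t$ is then used to discard the null sets $\{X_t=\hat\zeta^m_t\}$, and continuity of $\hat\zeta^m_t$ and of its inverse in $m$ to push the inequality through the $m$-integration (joint measurability, validity of the stochastic Fubini, and $\int_{(0,\infty)}\1_{\{B^*_{T_1}\ge m\}}\,d\phi(m)=\phi(B^*_{T_1})-\phi(0)$), leaving \eqref{eq:superhedge} holding $\Pc(\mu)$-q.s. This step is the main obstacle: transferring the index-clock barrier $\hat\zeta^m$ to the Brownian clock through the possibly discontinuous time-change $T$ and its inverse $\theta^{-1}$, and tracking the running maximum together with the accumulated position across it, is delicate. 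It is the full-marginal analogue of the finite-marginal computations of Ob{\l}{\'o}j--Spoida \cite{OS} and Henry-Labord\`ere--Ob{\l}{\'o}j--Spoida--Touzi \cite{HOST}, and an alternative route is to obtain \eqref{eq:superhedge} as a limit of their finite-marginal superhedges (cf. Section \ref{sec:discussions}).

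For (c) and the value: starting from the explicit $\lambda^{\hat\zeta^m,m}$, a Lebesgue--Stieltjes integration by parts in $t$, together with $-\partial_x c(t,x)=\mu_t([x,\infty))$ (valid since $\mu_t$ is atomless) and the continuity of $\partial_t c$ from Assumption \ref{ass:A}(i), identifies $\mu(\lambda^{\hat\zeta^m,m})$ with the objective in \eqref{eq:maxzeta} evaluated at $\hat\zeta^m$, the discrete part $\lambda^{\hat\zeta^m,m}_d$ accounting precisely for the jump contributions; by minimality of $\hat\zeta^m$ this is $C(m)$. Integrating against $d\phi(m)$ (interchange justified as in (a)) gives $\mu(\hat\lambda)=\phi(0)+\int_0^\infty C(m)\,d\phi(m)$, which equals $D(\mu)=D_0(\mu)$ by Theorem \ref{them:main2} and Corollary \ref{cor:main}; since (b) shows $(\hat\lambda,\widehat H)\in\Dc_0(\mu)$, whence $\mu(\hat\lambda)\ge D_0(\mu)$, we conclude $\mu(\hat\lambda)=D_0(\mu)$, which together with the superhedging inequality from step (b) is exactly \eqref{eq:superhedge}.
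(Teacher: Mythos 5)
Your steps (a) and (c), and the concluding comparison of $\mu(\hat\lambda)$ to $D_0(\mu)$ via Theorem~\ref{them:main2} and Corollary~\ref{cor:main}, match the paper. The genuine gap is in step (b). The paper does not prove a per-$m$ continuous-time pathwise superhedge. It starts from the finite-marginal pathwise inequality of \cite{HOST} (Proposition~\ref{prop:pathwise_inequality}), applied along discrete grids $\pi_n$, and obtains \eqref{eq:superhedge} by passing to the limit $n\to\infty$: Lemma~\ref{lemm:conv_static} gives the static part q.s.\ (using atomlessness of $\mu_t$), the first dynamic term converges via the hitting-time structure of $\tau_m$, and---this is the crucial device---the second dynamic term is integrated against $d\phi(m)$ \emph{before} passing to the limit in $n$. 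That interchange is not cosmetic. At fixed $m$ the predictable integrand is discontinuous on the boundary events $\{\om_{I^-(s)}=\hat\zeta^m_{\theta^{-1}(s)}\}$ and $\{\om^*_{I^-(s)}=m\}$, and it is only after integrating in $m$ that the continuity of $m\mapsto\hat\zeta^m_t$ and of its inverse (your condition \eqref{eq:dual_opti_condi}) yields pointwise convergence of the predictable integrands on $\Om\times(0,\infty)$, hence convergence in probability of the stochastic integrals, hence a.s.\ convergence along a subsequence. The direct per-$m$ verification you sketch (``absorbing the $\theta$-jump'', ``case analysis on whether $\tau_m\le T_1$'') is the full strength of the statement in disguise; you correctly flag it as ``the main obstacle'' but do not supply it, and the paper never attempts it. The alternative route you mention---obtaining \eqref{eq:superhedge} as a limit of the finite-marginal superhedges---is in fact the paper's proof, and is the route on which the hypotheses \eqref{eq:dual_opti_condi} actually do their work.

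Two secondary issues. First, you assert (b) $\Pc$-q.s., but atomlessness of $\mu_t$ discards the offending exceptional sets only under measures in $\Pc(\mu)$, which is why \eqref{eq:superhedge} is (and should be) a $\Pc(\mu)$-q.s.\ statement. Second, your argument that $\widehat H\in\Hc$ uses a lower bound on $(H^{\hat\zeta^m,m}\cdot B)$ ``read off from (b),'' but the theorem asserts $(\hat\lambda,\widehat H)\in\Lambda(\mu)\times\Hc$ under \eqref{eq:assump_zeta} alone, before the further hypotheses under which (b) could even be formulated; the membership in $\Hc$ must therefore be argued without appealing to the superhedge.
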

	
	\begin{Remark}
		The condition \eqref{eq:dual_opti_condi} 
		is needed to argue the convergence to $(\hat\lambda,\widehat H)$, in an appropriate sense, of the corresponding dual optimizers for the finite marginals case (see Lemma \ref{lemm:conv_static}).
		As seen from the proof, 
		if $\hat\zeta^m_\cdot$ can be represented as a countable sum, i.e.
		\bqn 	\label{eq:assumption_countable}
			\hat\zeta^m_s=\sum_{k=0}^\infty \zeta^m_k\1_{(t_k,t_{k+1}]}(s),
		\eqn
		for some $(\zeta^m_k)_{k \ge 0}$,
		then $(\hat\lambda,\widehat H)$ is a dual optimizer even though condition \eqref{eq:dual_opti_condi} fails. 
	\end{Remark}

\subsection{An associated martingale inequality}
\label{subsec:MartIneq}

	In this section we establish a closely related martingale inequality. We stress that this result does not require Assumption \ref{ass:A}. 
	 
	 \begin{Proposition}\label{prop:mtq:new}
		Let $M$ be a right continuous martingale, $\phi:\R_+\to\R$ non-decreasing and c\`adl\`ag, and $\zeta:[0,1]\times\R_+\to\R$ such that, for $m>0$, $\zeta^m_\cdot\in\V^+_l$ and $\zeta_1^m < m$. 
		Assume further that for each $m>0$,
		\b*
			\Big\{t \in [0,1) ~:x\longmapsto\P\left[M_t \le x\right]\textrm{is discontinuous at}~ x=\zeta^m_t  \Big\}
		\e*
		is a $d\zeta^{m,c}_t$-null set, where $\zeta^{m,c}_t$ is the continuous part of $t \mapsto \zeta^m_t$.
		Then, with $M^*_t := \max_{0 \le s \le t} M_s$, it holds that 
		\b*
			\E \big[ \phi(M^*_1) \big]
			&\le&
			\phi(0)~+
			\int_0^{\infty} 
				\E \left[ \int_0^1  \tilde\lambda^m (M,dt) \right]
			d \phi(m),
		\e*	
		where $\tilde\lambda^m(x,dt):=\lambda^{\zeta^m,m}(x,dt)$ (cf. \eqref{eq:lambda_m}) so that
	{\setlength{\arraycolsep}{-2.8cm}
	\bqq
		\E \left[ \int_0^1  \tilde\lambda^m (M,dt) \right]
		~=~
		\int_0^1 \frac{\P\left[M_t>\zeta^m_t\right](m-\zeta^m_t) - \E\left[\left(M_t-\zeta^m_t\right)^+\right]}{(m-\zeta^m_t)^2}~d\zeta^{m,c}_t
		\\ && ~+~ 
		\frac{\E\left[(M_1 - \zeta^m_1)^+\right]}{m - \zeta^m_1}
		~+~
		\sum_t
		~\bigg[ \frac{\E\left[(M_{t} - \zeta^m_t)^+\right]}{m - \zeta^m_t} - \frac{\E\left[(M_{t} - \zeta^m_{t^+})^+\right]}{m - \zeta^m_{t^+}}\bigg].
	\eqq }
	\end{Proposition}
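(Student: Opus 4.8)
The plan is to reduce the statement to a pathwise super‑replication inequality for the digital‑on‑maximum payoff $\1_{\{M^*_1\ge m\}}$, one for each fixed level $m>0$, and then to take expectations and integrate over $m$ against $d\phi$. Throughout fix $m>0$, write $\zeta=\zeta^m$, and let $\tau_m:=\inf\{t\in[0,1]:M_t\ge m\}$ be the first passage of $M$ above $m$.

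\textbf{Step 1 (super‑hedge for a fixed $m$).} The core is the a.s.\ inequality
\[
  \1_{\{M^*_1\ge m\}}\;\le\;\int_0^1\tilde\lambda^m(M,dt)\;+\;\big(G^m\cdot M\big)_1 ,
\]
where $G^m$ is the bounded, predictable barrier strategy $H^{\zeta^m,m}$ of \eqref{eq:dyn_strat_n}, read here with time‑change equal to the identity (so that $\tau_m$ is the first passage of $M$), and $(G^m\cdot M)$ is a supermartingale null at $0$. I would prove it first for a \emph{piecewise‑constant} barrier $\zeta$ (finitely many levels on a grid), where the pair $(\tilde\lambda^m,G^m)$ is exactly the dual super‑hedge attached to the iterated Az\'ema--Yor embedding of Ob\l\'oj--Spoida \cite{OS} and Henry-Labord\`ere--Ob\l\'oj--Spoida--Touzi \cite{HOST}: it is obtained by concatenating, over the levels of $\zeta$ and the terminal date, the elementary one‑touch super‑hedge with a single static strike $\ell<m$,
\[
  \1_{\{M^*_1\ge m\}}\;\le\;\frac{(M_1-\ell)^+}{m-\ell}\;+\;\frac{M_{\tau_m\wedge1}-M_1}{m-\ell},
\]
(which for a continuous path is immediate from $M_{\tau_m}\ge m$ on $\{\tau_m\le1\}$), together with a re‑striking of the static call positions each time the barrier level is raised. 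Nothing beyond $\zeta_t\le\zeta_1<m$ is used here, which is precisely why Assumption \ref{ass:A} plays no role. I would then pass to the limit along piecewise‑constant barriers $\zeta^{(n)}\to\zeta$ in $\V^+_l$: since $\zeta$ is non‑decreasing with $\zeta_1<m$, the denominators obey $m-\zeta^{(n)}_t\ge m-\zeta^{(n)}_1\to m-\zeta_1>0$, and right‑continuity of $M$ together with dominated convergence carry the inequality to the limit. (A more self‑contained route replaces the iteration by a time‑dependent It\^o--Tanaka expansion of $t\mapsto(M_{t\wedge\tau_m}-\zeta_t)^+/(m-\zeta_t)$; the treatment of the jumps of $\zeta$ is the same either way.)

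\textbf{Steps 2--4 (expectations, explicit form, integration over $m$).} Since $G^m$ is bounded by $(m-\zeta_1)^{-1}$ and, being of barrier type, $(G^m\cdot M)$ is a supermartingale — verified just as the property $\widehat H\in\Hc$ is verified in the proof of Theorem \ref{theo:optimser:dual} — one has $\E[(G^m\cdot M)_1]\le0$, hence $\P[M^*_1\ge m]\le\E[\int_0^1\tilde\lambda^m(M,dt)]$. Substituting the definitions of $\lambda^{\zeta^m,m}_c$ and $\lambda^{\zeta^m,m}_d$ and using linearity of the expectation gives the displayed formula for $\E[\int_0^1\tilde\lambda^m(M,dt)]$: on the continuous part of $d\zeta^m$ one has $\E[(m-M_t)\1_{\{M_t\ge\zeta^m_t\}}]=(m-\zeta^m_t)\P[M_t\ge\zeta^m_t]-\E[(M_t-\zeta^m_t)^+]$, and the hypothesis on the discontinuity set of $x\mapsto\P[M_t\le x]$ allows one to replace $\P[M_t\ge\zeta^m_t]$ by $\P[M_t>\zeta^m_t]$ for $d\zeta^{m,c}$‑a.e.\ $t$; the atoms of $d\zeta^m$ produce the sum over its jump times, and the terminal term produces $\E[(M_1-\zeta^m_1)^+]/(m-\zeta^m_1)$. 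Finally, since $\phi$ is non‑decreasing and c\`adl\`ag, $\phi(M^*_1)=\phi(0)+\int_0^\infty\1_{\{M^*_1\ge m\}}\,d\phi(m)$; taking expectations, applying Tonelli (the integrand is nonnegative), and inserting the bound from Step 2 yields
\[
  \E[\phi(M^*_1)]=\phi(0)+\int_0^\infty\P[M^*_1\ge m]\,d\phi(m)\;\le\;\phi(0)+\int_0^\infty\E\Big[\int_0^1\tilde\lambda^m(M,dt)\Big]d\phi(m),
\]
which is the assertion.

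\textbf{Main obstacle.} The real difficulty is Step 1 for a general non‑decreasing c\`agl\`ad barrier and a merely right‑continuous martingale: (i) the bookkeeping of the iterated Az\'ema--Yor static/dynamic portfolio in the piecewise‑constant case — which calls are held on which time interval, and how raising a level is financed by the positions already in place — and (ii) the passage to the limit, where the possibly countably many jumps of $\zeta$ must be matched term by term against the jump sum in the statement. The jumps of $M$ itself also require care — an overshoot of $M$ above $m$ at the first passage, and the possibility that $M$ reaches its running maximum only through a left limit — but this is absorbed into the precise choice of the strategy $H^{\zeta^m,m}$. The remaining points (the supermartingale property of $(G^m\cdot M)$ when $M$ has jumps, and the interchanges of limits and expectations) are routine.
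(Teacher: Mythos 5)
Your proposal uses the same pathwise inequality of \cite{HOST} (Proposition \ref{prop:pathwise_inequality}) as the engine, but reverses the order of the two key operations. The paper first takes expectation of the finite-grid inequality \eqref{eq:mtg_ineq_n} (the dynamic terms then drop out because their expectation is nonpositive, by the argument in Proposition 3.2 of \cite{HOST}), and only afterwards passes to the limit $|\pi_n|\to 0$. After this first step everything is deterministic: one need only show the pointwise convergence $f_n(t)\to f(t)$ for $d\zeta^{m,c}$-a.e.\ $t$ — which is exactly where the discontinuity hypothesis enters — and conclude by dominated convergence. You instead propose to pass to the limit first, obtaining an a.s.\ pathwise super-replication identity $\1_{\{M^*_1\ge m\}}\le\int_0^1\tilde\lambda^m(M,dt)+(G^m\cdot M)_1$, and then take expectations. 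That is a harder route: (i) the static limit involves the indicator $\1_{\{M_t\ge\zeta^m_t\}}$, which is discontinuous in $\zeta^m_t$, so you must first invoke a Fubini argument — $\E[\int_0^1\1_{\{M_t=\zeta^m_t\}}d\zeta^{m,c}_t]=\int_0^1\P[M_t=\zeta^m_t]\,d\zeta^{m,c}_t=0$ under the stated hypothesis, hence a.s.\ $\int_0^1\1_{\{M_t=\zeta^m_t\}}d\zeta^{m,c}_t=0$ — before you can argue pointwise convergence on a full-measure event; (ii) the dynamic limit is a stochastic integral, whose convergence (Jacod--Shiryaev Theorem I.4.40, as the paper uses elsewhere) and supermartingale property need to be checked, not just cited as routine; (iii) the first-passage term does not converge to part of a stochastic integral but to the nonpositive scalar $(m-M_{\tau_m})/(m-\zeta^m_{\tau_m})$ thanks to right-continuity and the overshoot $M_{\tau_m}\ge m$. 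What you are proposing, in effect, is the content of the paper's separate Proposition \ref{prop:limit_ineq}, and the paper proves that proposition under extra hypotheses precisely because the pathwise limit is more delicate. Your plan can be made to work under the hypothesis of Proposition \ref{prop:mtq:new} via the Fubini step above, but the paper's order of operations — expectation first, then limit — sidesteps all of the pathwise stochastic-integral subtleties and is strictly cleaner; it only ever requires dominated convergence for real-valued sequences. You should be explicit that the "dominated convergence" and "supermartingale" claims in your Step 1 and Step 2 hide exactly the content that the paper avoids by taking expectations early.
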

	
	We conclude this section with a remark on an alternative version of the above martingale inequality.

	\begin{Remark}
		Suppose that $M$ is a c\`adl\`ag martingale such that the function $c_M(t,x) := \E[ (M_t - x)^+]$ is $\mathcal{C}^1$ in $t$. Further, let $\phi:\R_+\to\R$ non-decreasing and c\`adl\`ag, and $\zeta:[0,1]\times\R_+\to\R$ such that, for $m>0$, $\zeta^m_\cdot\in\V^+_l$ and $\zeta_1^m < m$.
		Then,
		\b*
			\E \Big[ \phi( M^*_1 ) \Big]
			&\le&
			\phi(0)
			~+
			\int_0^{\infty}
				\left( \frac{\E [ (M_0 - \zeta^m_0)^+] }{m - \zeta^m_0}
				~+
				\int_0^1 \frac{\partial_t c_M(t,\zeta^m_t) }{m - \zeta^m_t} d t
				\right)
			d \phi(m).
		\e*
		Indeed, due to Monroe \cite{Monroe}, there is $\P\in\Pc(\mu)$ such that 
			\bqn	\label{eq:ineq_max} 
				\E\left[ \phi (M^*_1)\right]
				~=~ \E^{\P} \Big[ \phi \Big( \max_{0 \le t \le 1} B_{T_t} \Big) \Big]
				~\le ~
				\E^{\P} \big[ \phi \big( B^*_{T_1} \big) \big],
			\eqn 
		where the inequality follows as $\phi$ is non-decreasing and $\max_{0 \le t \le 1} B_{T_t}\le B^*_{T_1}$. The above inequality is therefore an immediate consequence of Theorem \ref{them:main2}. 		
	\end{Remark}

\section{Further discussion}
\label{sec:discussions}

	In this section, we provide some discussion on the relation between the optimal SEP and the martingale transport problem, and specify how the optimal SEP given full marginals can be considered 
	as the limit of the approximating problem defined by a finite subset of marginals. We also discuss a numerical scheme for the problem $C(m)$ in \eqref{eq:maxzeta}.

\subsection{The optimal SEP given finitely many marginals}

	First, we consider the optimal SEP given finitely many marginals and recall some results established in previous works.
	To this end, for $n \ge 1$, let $\pi_n = \{ t^n_0, \cdots, t^n_n \}$ be a discrete time grid on $[0,1]$ such that $0 = t^n_0 < t^n_1 < \cdots < t^n_n = 1$. 
	Then, let
	\b*
		\Pc_n(\mu)
		&:=&
		\Big\{
			\P \in \Pc
			~:
			B_{T_{t^n_k}} \sim^{\P} \mu_{t^n_k},
			~\forall k = 1, \cdots, n
		\Big\}.
	\e*
	The set $\Pc_n(\mu)$ consists of all Skorokhod embeddings of the $n$ marginals $(\mu_{t^n_k})_{k = 1, \cdots n}$.
	Let $\Phi_n : \Om \x (\R_+)^n \to \R$ be a reward function.
	The associated optimal SEP is formulated as
	\be \label{eq:Pn}
		P_n(\mu)
		&:=&
		\sup_{\P \in \Pc_n(\mu)}
		\E^{\P} \Big[ \Phi_n \Big( B_{\cdot\wedge T_1}, T_{t_1^n}, \cdots, T_{t_n^n} \Big) \Big].
	\ee

\subsubsection{The duality result}

	In Guo, Tan and Touzi \cite{GTT}, a duality result is established for the optimal SEP \eqref{eq:Pn}.
	Let us define
	\bqqn \label{eq:Dn}
		D_n(\mu)
		&:=&
		\inf \Big\{
			\sum_{k=1}^n \mu_{t^n_k}(\lambda_k) 
			~:(\lambda_1, \cdots, \lambda_n, H) \in (\Cc_1)^n \x \Hc~\mbox{such that}~ \nonumber \\
		&&~
			\sum_{k=1}^n \lambda_k (B_{T_{t^n_k}}) + \left(H\cdot B\right)_{T_1}
			\;\ge\;
			\Phi_n(B_{\cdot\wedge T_1}, T_{t^n_1}, \cdots, T_{t^n_n}),
			~\Pc\mbox{-q.s.}
		\Big\}.
	\eqqn

	One of the main results in \cite{GTT} is the following duality result, which is a cornerstone in our proof of Theorem \ref{theo:main}.
	
	\begin{Proposition}	\label{prop:duality_GTT}
		Suppose that Assumption \ref{ass:mu} holds true and that $\Phi_n$ is upper semi-continuous and bounded from above. Then, $P_n(\mu) = D_n(\mu)$ and the supremum of the problem $P_n(\mu)$ in \eqref{eq:Pn} is attained. 
	\end{Proposition}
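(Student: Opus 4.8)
The plan is to get weak duality for free from the super\-hedging inequality, and to obtain strong duality together with primal attainment from a compactness argument combined with a Lagrangian / minimax step, following Beiglb\"ock, Cox and Huesmann \cite{BH} but with the marginal constraint imposed at the $n$ grid times. For weak duality, fix $\P\in\Pc_n(\mu)$ and an admissible $(\lambda_1,\dots,\lambda_n,H)$ from \eqref{eq:Dn} and integrate the $\Pc$-q.s.\ inequality under $\P$: since $\lambda_k\in\Cc_1$ and $B_{T_{t^n_k}}\sim^{\P}\mu_{t^n_k}$ has a finite first moment, $\E^{\P}[\lambda_k(B_{T_{t^n_k}})]=\mu_{t^n_k}(\lambda_k)$; since $(H\cdot B)$ is a strong supermartingale null at $0$ and $T_1$ is an $\Fbb$-stopping time, $\E^{\P}[(H\cdot B)_{T_1}]\le 0$; hence $\E^{\P}[\Phi_n(B_{\cdot\wedge T_1},T_{t^n_1},\dots,T_{t^n_n})]\le\sum_k\mu_{t^n_k}(\lambda_k)$, and taking the supremum over $\P$ and the infimum over dual variables gives $P_n(\mu)\le D_n(\mu)$.

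For primal attainment, first $\Pc_n(\mu)\neq\emptyset$ by Kellerer's theorem and Monroe's embedding, exactly as in Lemma \ref{lemm:canon_form}. The structural fact behind the whole proof is that $\Pc_n(\mu)$ is compact for the weak topology on $\Pc(\Omb)$. The $\Om$-marginal of every $\P\in\Pc$ is the Wiener measure, hence tight; tightness of the law of the time\-change $T$ on $\V^+_r$ follows from Doob's maximal inequality applied to the submartingale $|B_{\cdot\wedge T_1}|$, whose terminal value has the $\P$-independent $L^1$-norm $C:=\int|x|\mu_{t^n_n}(dx)$, so that $\P[\sup_{s\le T_1}|B_s|\ge a]\le C/a$ and $\P[T_1>R]\le C/a+\P_0[\sup_{s\le R}|B_s|<a]$, which tends to $0$ uniformly over $\P\in\Pc_n(\mu)$ if $a=a(R)\to\infty$ slowly (e.g.\ $a=R^{1/3}$); and one checks that the Brownian property, the uniform integrability of $B_{\cdot\wedge T_1}$ (controlled by the fixed $L^1$-marginal) and the constraints $B_{T_{t^n_k}}\sim\mu_{t^n_k}$ all survive passage to the weak limit, using the enlarged\-space framework of El Karoui and Tan \cite{EKT1,EKT2} to handle the discontinuities of the evaluation maps $\theta\mapsto\theta_{t^n_k}$. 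Given compactness, $\P\mapsto\E^{\P}[\Phi_n(B_{\cdot\wedge T_1},T_{t^n_1},\dots,T_{t^n_n})]$ is upper semicontinuous by the Portmanteau theorem ($\Phi_n$ being u.s.c.\ and bounded above, so no lower integrability is needed), whence the supremum defining $P_n(\mu)$ is attained.

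For strong duality, dualize the $n$ marginal constraints. For $\P\in\Pc$ one has $\P\in\Pc_n(\mu)$ iff $\sup_{\lambda\in(\Cc_1)^n}\sum_k(\E^{\P}[\lambda_k(B_{T_{t^n_k}})]-\mu_{t^n_k}(\lambda_k))=0$, since this quantity equals $0$ at $\lambda=0$, is nonnegative, and is $+\infty$ as soon as one of the laws differs from $\mu_{t^n_k}$ (here $\Cc_b\subset\Cc_1$ separates probability measures); therefore $P_n(\mu)=\sup_{\P\in\Pc}\inf_{\lambda\in(\Cc_1)^n}(\E^{\P}[\Phi_n-\sum_k\lambda_k(B_{T_{t^n_k}})]+\sum_k\mu_{t^n_k}(\lambda_k))$, with $\Phi_n$ evaluated as above. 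A minimax / convex\-duality theorem — for which the compactness of $\Pc_n(\mu)$ just established is what rules out a duality gap — allows one to interchange $\sup_\P$ and $\inf_\lambda$. For fixed $\lambda$, the inner supremum $\sup_{\P\in\Pc}\E^{\P}[\Psi]$ with $\Psi:=\Phi_n-\sum_k\lambda_k(B_{T_{t^n_k}})$ is the value of the \emph{unconstrained} optimal SEP, and this has its own super\-hedging dual, $\sup_{\P\in\Pc}\E^{\P}[\Psi]=\inf\{c\in\R:\exists H\in\Hc,\ c+(H\cdot B)_{T_1}\ge\Psi\ \ \Pc\text{-q.s.}\}$, obtained by an optional\-decomposition argument — the aggregated Snell envelope of $\Psi$ is a $\Pc$-supermartingale dominating $\Psi$ which, in the Brownian filtration, decomposes as $c+(H\cdot B)-A$ with $A$ non\-decreasing. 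Substituting, and absorbing the constant $c$ into one of the $\lambda_k$ (admissible, since constants lie in $\Cc_1$ and the $\mu_{t^n_k}$ are probability measures), identifies $\inf_\lambda(\sup_{\P\in\Pc}\E^{\P}[\Psi]+\sum_k\mu_{t^n_k}(\lambda_k))$ with $D_n(\mu)$, so $P_n(\mu)=D_n(\mu)$.

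The main obstacle is the compactness/closedness package for $\Pc_n(\mu)$ on the enlarged space $\Omb$: simultaneously keeping the Brownian property, the uniform integrability of the embedded martingale, and the $n$ marginal constraints stable under weak limits, and topologizing the time\-change component correctly (which is also why only an $L^1$ bound, supplied by Doob's inequality rather than a second moment, is available for tightness of $T$). This single ingredient is what both legitimizes the minimax exchange in the strong\-duality step and delivers primal attainment; granting it, the unconstrained super\-hedging duality and the bookkeeping around the constants are routine.
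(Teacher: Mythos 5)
The paper does not prove this proposition: it is quoted as ``one of the main results in \cite{GTT}'' and used as a black box, so there is no in-paper proof to compare your argument against. Judged on its own terms, your sketch correctly identifies the overall architecture — weak duality by integrating the quasi-sure inequality, primal attainment from weak compactness and closedness of $\Pc_n(\mu)$ on the enlarged space $\Omb$, strong duality by Lagrangian dualization of the $n$ marginal constraints — and your tightness argument (Wiener law on the $\Om$-factor, Doob's $L^1$ maximal inequality to control $T_1$) is a valid substitute for the bound from Monroe's Proposition 7 that the paper uses in its Lemma \ref{lemm:tight}.

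However, two of the steps you present as routine are precisely where the work lies in \cite{GTT} and \cite{BH}, and as written they do not close. First, the minimax exchange: after dualizing the marginals, the supremum runs over the \emph{full} set $\Pc$, not over $\Pc_n(\mu)$, and $\Pc$ is not weakly compact ($T_1$ is unbounded). Compactness of $\Pc_n(\mu)$ does not by itself license Sion/Fan-type minimax here; one has to truncate (e.g.\ to $\{T_1\le R\}$) and pass to the limit, or invoke a capacitability argument as in \cite{BH}, and this requires a separate justification you have elided. Second, the identity $\sup_{\P\in\Pc}\E^{\P}[\Psi]=\inf\{c:\exists H\in\Hc,\ c+(H\cdot B)_{T_1}\ge\Psi\ \Pc\text{-q.s.}\}$ cannot be obtained by an optional-decomposition argument ``in the Brownian filtration'': the relevant filtration is $\Fbb$, which is strictly larger than the filtration of $B$ (it carries the information $\{T_r\le s\}$), so there is no a priori martingale representation of $(\Fbb,\P)$-martingales as stochastic integrals against $B$. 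Establishing that $(H\cdot B)$-strategies nevertheless suffice to superhedge in the quasi-sure sense is exactly the non-trivial content of the Beiglb\"ock–Cox–Huesmann duality, and it needs a dedicated argument rather than a one-line appeal to Snell envelopes. With these two ingredients supplied, your outline matches the approach one would expect in \cite{GTT}; without them it is a plausible plan but not a proof.
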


\subsubsection{Optimal SEP and martingale transport problem}

	One of the main motivations for studying the optimal Skorokhod embedding problem
	is the fact that any continuous local martingale can be seen as a time changed Brownian motion.
	It is therefore natural to relate the optimal SEP to the martingale transport (MT) problem.

	Let $\Omt := C([0,1], \R)$ denote the canonical space of all continuous paths on $[0,1]$, 
	with canonical process $X$ and canonical filtration $\Ft = (\Fct_t)_{0 \le t \le 1}$.
	Let $\Mc$ denote the collection of all martingale measures on $\Om$, 
	i.e. the probability measures $\Pt$ on $(\Omt, \Fct)$ under which $X$ is a martingale.
	We recall that there is some non-decreasing $\Ft$-progressively measurable process $\langle X \rangle$ which coincides with the quadratic variation of $X$ under every martingale measure $\Pt \in \Mc$ (see e.g. Karandikar \cite{Karandikar}).
	Let
	\b*
		\langle X \rangle^{-1}_s &:=& \inf \{ t ~: \langle X \rangle_t \ge s \}.
	\e*
	Then, under every $\Pt \in \Mc$, the process $(X_{\langle X \rangle^{-1}_s})_{ s \ge 0}$ is a Brownian motion,
	and for every $t \ge 0$, $\langle X \rangle_t$ is a stopping time w.r.t. the filtration $(\Fc_{\langle X \rangle^{-1}_s})_{s \ge 0}$.
	Let $\mu = (\mu_t)_{0 \le t \le 1}$ be the given family of marginals satisfying Assumption \ref{ass:mu}.
	For $n \ge 1$ and a discrete time grid $\pi_n: 0 = t_0^n < t_1^n < \cdots < t^n_n = 1$,
	we denote
	\b*
		\Mc_n(\mu) 
		&:=&
		\big\{ \Pt \in \Mc ~: X_{t^n_k} \sim^{\Pt} \mu_{t^n_k}, ~ k = 1, \cdots, n \big\}.
	\e*
	For a reward function $\xi: \Omt \to \R$, we then define the MT problem
	\be \label{eq:Ptn}
		\tilde P_n (\mu)
		&:=&
		\sup_{\Pt \in \Mc_n(\mu)}
		\E^{\Pt} \Big[ \xi \big( X_{\cdot} \big)\Big].
	\ee
	The problem has a natural interpretation as a model-independent bound on arbitrage-free prices of the exotic option $\xi(X_{\cdot})$. 
	In order to introduce the corresponding dual formulation, let $\mathcal{\widetilde H}$ denote the collection of all $\Ft$- progressively measurable processes 
	$\tilde H:[0,1]\times\Omt\to\R$ such that $\int^\cdot\tilde H_sdX_s$ is a super-martingale under every $\Pt\in\Mc^c$. Then,
	\bqq \label{eq:dual:mot}
		\tilde D_n (\mu)
		&:=&
		\inf \Big\{
		 	\sum_{k=1}^n \mu_{t^n_k}(\lambda_k) ~~:~
			(\lambda, \widetilde H) \in (\Cc_1)^n \x \mathcal{\widetilde H}
			~\mbox{such that}~
			\nn\\
		&&~~~~~~~~~~~~~~~~~
			\sum_{k=1}^n \lambda_k(X_{t^n_k}) + \big(\widetilde H\cdot X\big)_1
			~\ge~
			\xi\left(X_{\cdot}\right),
			~\Mc\mbox{-q.s.}
		\Big\}. 
	\eqq
	The above dual problem can be interpreted as the minimal robust super-hedging cost of the exotic option, in the quasi-sure sense, using static strategies $\lambda$ and dynamic strategies $\widetilde H$.

	Via the time change argument, the above MOT problem and its dual version are related, respectively, to the optimal SEP $P_n(\mu)$ and the associated dual formulation $D_n(\mu)$. The following result is given in \cite{GTT}. It allows us to relate the limit of the above problems to our full marginal SEP; see Section 3.2 below. 
	
	\begin{Proposition} \label{prop:MT_duality}
		Suppose that Assumption \ref{ass:mu} holds true,
		and that the payoff function $\xi: \Omt \to \R$ is given by
		\be \label{eq:mtg_sep}
			\xi(X_{\cdot})
			&=&
			\Phi_n \left(
				X_{\langle X \rangle^{-1}_{\cdot}\wedge 1}, \langle X \rangle_{t_1^n}, \cdots, \langle X \rangle_{t^n_n} 
			\right),
		\ee
		for some $\Phi_n$ which is upper semi-continuous and bounded from above.
		Then, we have
		\b*
			P_n(\mu) ~~=~~ \tilde P_n(\mu)
			&=&
			\tilde D_n(\mu) ~~=~~ D_n(\mu).
		\e*
	\end{Proposition}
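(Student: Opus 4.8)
The plan is to establish the chain of equalities in two independent pieces, relying on the time-change correspondence between martingale measures on $\Omt$ and Skorokhod embeddings on $\Omb$. The equality $\tilde P_n(\mu) = \tilde D_n(\mu)$ (the martingale transport duality) and the equality $P_n(\mu) = D_n(\mu)$ (the SEP duality) are each cited results — the latter is precisely Proposition \ref{prop:duality_GTT}, and the former is established in \cite{GTT} — so the real content to be verified here is the pair of identifications $P_n(\mu) = \tilde P_n(\mu)$ on the primal side and $D_n(\mu) = \tilde D_n(\mu)$ on the dual side, under the specific structural form \eqref{eq:mtg_sep} of the payoff $\xi$.

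First, for the primal identification $P_n(\mu)=\tilde P_n(\mu)$, I would go in both directions. Given a martingale measure $\Pt\in\Mc_n(\mu)$, the Dambis–Dubins–Schwarz representation tells us that $W_s := X_{\langle X\rangle^{-1}_s}$ is a Brownian motion (after possibly enlarging the space to handle the case $\langle X\rangle_\infty < \infty$) and $T_t := \langle X\rangle_t$ is a non-decreasing càdlàg family of stopping times for the time-changed filtration, with $W_{T_t} = X_t \sim \mu_t$ and $X_{\langle X\rangle^{-1}_\cdot \wedge 1} = W_{\cdot \wedge T_1}$; uniform integrability of $W_{\cdot\wedge T_1}$ follows because $X$ is a martingale on $[0,1]$ closed by $X_1 \in L^1$. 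This produces a law in $\Pc_n(\mu)$ achieving the same expected value of $\Phi_n$, so $\tilde P_n(\mu)\le P_n(\mu)$. Conversely, given $\P\in\Pc_n(\mu)$, one runs the Brownian motion $B$ up to the (random, possibly unbounded) time horizon and sets $X_t := B_{T_t}$; the stopped-and-time-changed process $X$ is a martingale on $[0,1]$ by optional sampling together with the uniform integrability of $B_{\cdot\wedge T_1}$, its quadratic variation is $\langle X\rangle_t = T_t$ so that $\langle X\rangle^{-1}$ recovers $B$, and the marginal constraints transfer directly. Hence $P_n(\mu)\le\tilde P_n(\mu)$, giving equality. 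The one technical subtlety is the handling of the event $\{T_1 < \infty\}$ versus a genuinely infinite clock; uniform integrability is exactly what makes the closure $X_1 := B_{T_1}$ well-defined and integrable in all cases.

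Second, for the dual identification $D_n(\mu) = \tilde D_n(\mu)$, the static parts $\sum_k \mu_{t^n_k}(\lambda_k)$ are literally the same objective, so the task is to match the super-hedging constraints. Given $(\lambda, \widetilde H)\in(\Cc_1)^n\times\mathcal{\widetilde H}$ feasible for $\tilde D_n$, I would define $H$ on $\Omb$ by $H_s := \widetilde H_{T_s}$ composed appropriately through the inverse time change, so that $(H\cdot B)_{T_1}$ equals $(\widetilde H\cdot X)_1$ after the change of clock; the strong-supermartingale requirement defining $\Hc$ is the time-changed image of the supermartingale requirement defining $\mathcal{\widetilde H}$, and the $\Pc$-q.s. inequality becomes the $\Mc$-q.s. inequality via \eqref{eq:mtg_sep}, yielding $D_n(\mu)\le\tilde D_n(\mu)$. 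The reverse inclusion runs the same correspondence backwards. Combining the four links — $P_n(\mu)=\tilde P_n(\mu)$, $\tilde P_n(\mu)=\tilde D_n(\mu)$ from \cite{GTT}, $\tilde D_n(\mu)=D_n(\mu)$, and $D_n(\mu)=P_n(\mu)$ from Proposition \ref{prop:duality_GTT} — closes the cycle and proves the claimed string of equalities.

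I expect the main obstacle to be the careful bookkeeping of the time change on the level of $\sigma$-algebras and the non-surjectivity of $s\mapsto\langle X\rangle_s$: one must check that stochastic integrals, stopping-time properties, and the super/strong-supermartingale conditions are genuinely preserved under both the change of clock and its inverse, and that the enlargement of the probability space needed for the DDS Brownian motion does not affect any of the values. Everything else is either a direct transfer of constraints or an appeal to the already-established dualities, so no new estimates are required — only a precise verification that the dictionary between $\Mc_n(\mu)$ and $\Pc_n(\mu)$ (and between their dual feasible sets) is a bijection on the relevant objective values.
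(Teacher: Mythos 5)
The paper itself supplies no proof of this proposition — immediately before the statement it writes ``The following result is given in \cite{GTT}'' — so there is no in-house argument to compare against; your proposal is a from-scratch reconstruction of an externally cited fact. Your overall plan (cite the SEP duality $P_n(\mu)=D_n(\mu)$ of Proposition \ref{prop:duality_GTT}, cite the MT duality $\tilde P_n(\mu)=\tilde D_n(\mu)$ from \cite{GTT}, then close the square via the time-change dictionary between $\Pc_n(\mu)$ and $\Mc_n(\mu)$ on the primal side and between $\Hc$ and $\widetilde{\mathcal H}$ on the dual side) is the natural one.

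There is, however, a concrete gap in your direction $P_n(\mu)\le\tilde P_n(\mu)$. Given $\P\in\Pc_n(\mu)$ you set $X_t:=B_{T_t}$ and assert that the law of $X$ lies in $\Mc_n(\mu)$. But $t\mapsto T_t$ is only c\`adl\`ag, and wherever it jumps so does $X$; since $\Omt=C([0,1],\R)$ and $\Mc_n(\mu)$ consists of laws of \emph{continuous} martingales, the law of $B_{T_\cdot}$ in general lies outside $\Mc_n(\mu)$ and the inequality does not follow from this construction. (Your ``non-surjectivity of $s\mapsto\langle X\rangle_s$'' worry is a separate issue, handled by the standard enlargement in the opposite direction; the obstruction here is loss of continuity — precisely what Remark \ref{rem:cadlag_mot_n} has to argue around even for the special payoff $\phi(X_1^*)$.) The repair is to use a genuinely continuous reparameterisation rather than the raw clock $T$: on each cell $[t_{k-1}^n,t_k^n]$ fix a deterministic continuous increasing $h_k:[t_{k-1}^n,t_k^n)\to[0,\infty)$ with $h_k(t_{k-1}^n)=0$ and $h_k(t)\uparrow\infty$ as $t\uparrow t_k^n$, and set $\Lambda_t:=T_{t_k^n}\wedge\bigl(T_{t_{k-1}^n}+h_k(t)\bigr)$ on that cell, $\Lambda_{t_k^n}:=T_{t_k^n}$. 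Then each $\Lambda_t$ is an $\F^B$-stopping time, $t\mapsto\Lambda_t$ is continuous and non-decreasing with $\Lambda_{t_k^n}=T_{t_k^n}$, the process $X_t:=B_{\Lambda_t}$ is a continuous $(\Fc^B_{\Lambda_t})$-martingale with $\langle X\rangle=\Lambda$ and DDS Brownian motion $B_{\cdot\wedge T_1}$, and the payoff \eqref{eq:mtg_sep} evaluated on $X$ coincides with $\Phi_n(B_{\cdot\wedge T_1},T_{t_1^n},\dots,T_{t_n^n})$. With this interpolation inserted (and with the analogous care on the dual side, so that the transported inequality actually holds $\Pc$-q.s.\ rather than only on time-changed laws), your argument goes through.
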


\subsubsection{The iterated Az\'ema-Yor embedding}

	An example of payoff function $\xi$ satisfying the conditions in Proposition \ref{prop:MT_duality}, is given by $\xi (X_{\cdot}) := \phi( X_1^*)$ with $X_1^* := \max_{0 \le t \le 1} X_t$ and $\phi: \R_+ \to \R$ a non-decreasing, bounded and c\`adl\`ag function.
	This corresponds to the function $\Phi$ defined in \eqref{eq:Phi_phi}, for which the optimal SEP, given finitely many marginals, is solved by Henry-Labord\`ere, Ob{\l}{\'o}j, Spoida and Touzi \cite{HOST}, and Ob{\l}{\'o}j and Spoida \cite{OS}.

	To solve this problem, a first technical step is to establish the following path-wise inequality (Proposition 4.1 in \cite{HOST}).
	\begin{Proposition} \label{prop:pathwise_inequality}
		Let $\xb$ be a c\`adl\`ag path on $[0,1]$ and denote $\xb^*_t := \max_{0 \le s \le t} \xb_s$.
		Then, for every $m > \xb_0$ and $\zeta_1 \le \cdots \le \zeta_n < m$:
		\be \label{eq:pathwise_inequality}
			\1_{\{\xb^*_{t_n} \ge m \}}
			&\le&
			\sum_{i = 1}^n \left( \frac{(\xb_{t_i} - \zeta_i )^+}{m - \zeta_i} + \1_{\{\xb^*_{t_{i-1}} < m \le \xb^*_{t_i} \}} \frac{m - \xb_{t_i}}{m - \zeta_i} \right) \nonumber \\
			&& - ~
			\sum_{i = 1}^{n-1} \left( \frac{(\xb_{t_i} - \zeta_{i+1} )^+}{m - \zeta_{i+1}} + \1_{\{m \le \xb^*_{t_i}, ~ \zeta_{i+1} \le \xb_{t_i} \}} \frac{\xb_{t_{i+1}} - \xb_{t_i}}{m - \zeta_{i+1}} \right).
		\ee
	\end{Proposition}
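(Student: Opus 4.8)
The plan is to establish \eqref{eq:pathwise_inequality} as a purely deterministic estimate, organising the right-hand side around the first grid index at which the running maximum reaches $m$. Write, for $i=1,\dots,n$,
\[
  A_i := \frac{(\xb_{t_i}-\zeta_i)^+}{m-\zeta_i}, \qquad
  B_i := \1_{\{\xb^*_{t_{i-1}}<m\le \xb^*_{t_i}\}}\,\frac{m-\xb_{t_i}}{m-\zeta_i},
\]
and, for $i=1,\dots,n-1$,
\[
  C_i := \frac{(\xb_{t_i}-\zeta_{i+1})^+}{m-\zeta_{i+1}}, \qquad
  D_i := \1_{\{m\le \xb^*_{t_i},\,\zeta_{i+1}\le \xb_{t_i}\}}\,\frac{\xb_{t_{i+1}}-\xb_{t_i}}{m-\zeta_{i+1}},
\]
so that the right-hand side of \eqref{eq:pathwise_inequality} equals $\sum_{i=1}^n(A_i+B_i)-\sum_{i=1}^{n-1}(C_i+D_i)$.

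I would first record three elementary facts, valid for reals $a,b,x$ and $\zeta\le\zeta'<m$ with $x<m$: (a) $(b-\zeta')^+-(a-\zeta')^+\ge \1_{\{\zeta'\le a\}}(b-a)$, the subgradient inequality for the convex map $y\mapsto(y-\zeta')^+$; (b) $(x-\zeta)^+/(m-\zeta)\ge(x-\zeta')^+/(m-\zeta')$, i.e. $\zeta\mapsto(x-\zeta)^+/(m-\zeta)$ is non-increasing on $(-\infty,m)$ when $x<m$ (checked by cross-multiplication, since $(x-\zeta)(m-\zeta')-(x-\zeta')(m-\zeta)=(\zeta'-\zeta)(m-x)\ge0$); and (c) $(x-\zeta)^++(m-x)\ge m-\zeta$, i.e. simply $(x-\zeta)^+\ge x-\zeta$. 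In the notation above, (a) gives $A_{i+1}-C_i-D_i\ge0$ whenever $\xb^*_{t_i}\ge m$ (so the indicator in $D_i$ equals $1$), (b) gives $A_i\ge C_i$ whenever $\xb_{t_i}<m$, and (c) gives $A_j+B_j\ge1$ at the crossing index $j$ introduced below.

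The main argument then splits on whether $\xb^*_{t_n}\ge m$. If $\xb^*_{t_n}<m$, the left-hand side is $0$, every $B_i$ and every $D_i$ vanishes (each carries an indicator forcing $\xb^*_{t_i}\ge m$), and writing $\sum_{i=1}^nA_i-\sum_{i=1}^{n-1}C_i=A_n+\sum_{i=1}^{n-1}(A_i-C_i)$ the claim follows from (b), using $\xb_{t_i}\le\xb^*_{t_i}<m$. If $\xb^*_{t_n}\ge m$, set $j:=\min\{i:\xb^*_{t_i}\ge m\}$; since $m>\xb_0=\xb^*_{t_0}$ and $i\mapsto\xb^*_{t_i}$ is non-decreasing, $j\in\{1,\dots,n\}$, only $B_j$ is nonzero among the $B_i$'s, and $D_i=0$ for $i<j$. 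Regrouping,
\[
  \sum_{i=1}^n(A_i+B_i)-\sum_{i=1}^{n-1}(C_i+D_i)
  = \sum_{i=1}^{j-1}(A_i-C_i) + \Big(A_j+B_j+\sum_{i=j}^{n-1}(A_{i+1}-C_i-D_i)\Big);
\]
the head $\sum_{i=1}^{j-1}(A_i-C_i)$ is $\ge0$ by (b) (since $\xb_{t_i}\le\xb^*_{t_i}<m$ for $i<j$), each summand in the bracket is $\ge0$ by (a) (since $\xb^*_{t_i}\ge m$ for $i\ge j$), so the bracket is $\ge A_j+B_j=\big((\xb_{t_j}-\zeta_j)^++(m-\xb_{t_j})\big)/(m-\zeta_j)\ge1$ by (c). Hence the right-hand side is $\ge1$, which is the left-hand side.

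The proof is a bookkeeping exercise, so there is no deep obstacle; the one genuinely delicate point is the càdlàg nature of $\xb$. Because the running maximum can jump strictly above $m$ strictly between two grid times, at the crossing index $j$ the value $\xb_{t_j}$ itself may lie above, below, or at $m$, and the reversal term $B_j$ — together with the fact that (c) holds with equality exactly when $x\ge\zeta$ — is precisely what absorbs this ambiguity. The only other thing to verify carefully is that all the $B_i$ with $i\ne j$ and all the $D_i$ with $i<j$ that are supposed to disappear really do, which is immediate from the monotonicity of $i\mapsto\xb^*_{t_i}$ and the minimality of $j$.
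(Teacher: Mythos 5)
Your proof is correct. The paper does not reproduce an argument for this statement — it simply cites Proposition~4.1 of Henry-Labord\`ere, Ob{\l}{\'o}j, Spoida and Touzi \cite{HOST} — and the decomposition you use (case split on whether $\xb^*_{t_n}$ reaches $m$, identify the first crossing grid index $j$, telescope the static legs using the subgradient inequality for $y\mapsto(y-\zeta')^+$ and monotonicity of $\zeta\mapsto(x-\zeta)^+/(m-\zeta)$, and absorb the crossing step with $(x-\zeta)^+ + (m-x)\ge m-\zeta$) is exactly the mechanism behind that result, including the correct handling of the c\`adl\`ag overshoot at the crossing index.
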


	As argued in \cite{HOST}, the above inequality implies that also the following inequality holds:
		\bqn	\label{eq:pathwise_inequality_weak}
			\1_{\{\xb^*_{t_n} \ge m \}}
			~\le~
			\sum_{i = 1}^n
			\lambda^{\zeta,m}_i(\xb_{t_i})+\int_{t_{i-1}}^{t_i}H^{\zeta,m}_t(\xb)d\xb_t,		
		\eqn
	with $T_m(\xb):=\inf\{t\ge 0: \xb_t\ge m\}$ and
	\bqq
		\lambda^{\zeta,m}_i(x)
		~:=
		\frac{(x-\zeta_i)^+}{m-\zeta_i}-\frac{(x-\zeta_{i+1})^+}{m-\zeta_{i+1}}\1_{\{i<n\}},
		~~~~~~~~~~~~~~~~~~~~~\quad x\in\R,~~~~~~~\\
			H^{\zeta,m}_t(\xb)
			~:=
			-\frac{\1_{\left(t_{i-1},t\right]}\left(T_m(\xb)\right)
			+
			\1_{\left[0,t_{i-1}\right]}\left(T_m(\xb)\right)			
			\1_{\big\{\xb_{t_{i-1}}\ge \zeta_i\big\}}}
			{m-\zeta_i},
			\;\; t\in[t_{i-1},t_i).
		\eqq 
	Indeed, if $\xb$ is continuous at $T_m(\xb)$, then the two inequalities coincide.
	If $\xb$ has a jump at $T_m(\xb)$, then the first component of the dynamic term in \eqref{eq:pathwise_inequality_weak} strictly dominates the corresponding term in \eqref{eq:pathwise_inequality}.

	Intuitively, the l.h.s. of \eqref{eq:pathwise_inequality} can be interpreted as the payoff of a specific exotic option. It serves as the basic ingredient for more general exotic payoffs since any non-decreasing function $\phi$ admits the representation $\phi(x) := \phi(0) + \int_0^x \1_{x \ge m}  d \phi(m)$.
	The r.h.s. of \eqref{eq:pathwise_inequality} can be interpreted as a model-independent super-replicating semi-static strategy, the cost of which can be computed explicitly.
	
	Minimizing the super-hedging cost yields the following optimization problem:
	\be \label{eq:Cn}
		C_n(m) 
		&:=&
		\inf_{\zeta_1 \le \cdots \le \zeta_n \le m}~
		\sum_{i=1}^n
		~\left( \frac{c_i(\zeta_i)}{m - \zeta_i} - \frac{c_i(\zeta_{i+1})}{m - \zeta_{i+1}} \1_{i < n} \right),
	\ee
	where $c_k(x) := \int_{-\infty}^x (y-x)^+ \mu_{t_k}(dy)$.
	It is argued in \cite{HOST} that the minimization problem \eqref{eq:Cn} admits at least one solution $(\hat\zeta_k(m))_{1 \le k \le n}$.
	An immediate consequence is that
	\be \label{eq:DnCn}
		D_n(\mu) ~~=~~ \tilde D_n(\mu)
		&\le&
		\phi(0) + \int_0^{\infty} C_n(m) ~d \phi(m).
	\ee
	
	Under further conditions (Assumption $\circledast$  in \cite{OS}), Ob{\l}{\'o}j and Spoida \cite{OS} provide an iterative way to solve \eqref{eq:Cn}, and to obtain a family of continuous functions $(\xi_k)_{1 \le k \le n}$ satisfying
	$\hat\zeta_k(m) = \max_{k \le i \le n} \xi_k(m), ~\forall m \ge 0$. 
	Using the family of functions $(\xi_k)_{1 \le k \le n}$, they further define a family of iterated Az\'ema-Yor embedding stopping times, given by $\tau_0 := 0$ and
	\be \label{eq:iteratedAM}
		\tau_k &:=&
		\begin{cases}
			\inf\{ t \ge \tau_{k-1} ~: B_t \le \xi_k(B^*_t)\}, &\mbox{if}~ B_{\tau_{k-1}} > \xi_k(B^*_{\tau_{k-1}}),
			\\
			\tau_{k-1}, & \mbox{else}.
		\end{cases}
	\ee
	The stopping times, $(\tau_k)_{k=1, \cdots, n}$, embed the marginals $(\mu_{t^n_k})_{k  = 1, \cdots, n}$. Moreover, it is proven in \cite{HOST} that the embedding satisfies
	\b*
		\E \big[ \phi(W^*_{\tau_n}) \big]
		&=&
		\phi(0) + \int_0^{\infty} C_n(m) d\phi(m).
	\e*
	In consequence, under Assumption $\circledast$  in \cite{OS}, it holds that
	\bqn \label{eq:MMM_n}
		P_n(\mu) ~=~ \tilde P_n(\mu)
		~=~
		\tilde D_n(\mu) ~=~ D_n(\mu)
		~=~
		\phi(0) + \int_0^{\infty} C_n(m) d\phi(m).		
	\eqn
	We notice that the discrete process $(W_{\tau_k})_{1 \le k \le n}$ resulting from this construction is in general not a Markov chain.

	\begin{Remark} \label{rem:cadlag_mot_n}
		In \cite{HOST}, the result \eqref{eq:MMM_n} is formulated for the continuous martingale problem as defined in \eqref{eq:Ptn}. However, it can be easily deduced that the solution is optimal also for the corresponding c\`adl\`ag martingale problem. 
		Specifically, let $\Omt^d$ denote the space of all c\`adl\`ag functions on $[0,1]$, $X$ the canonical space with canonical filtration $\Ft^d$, and $\Mc^d$ the space of all martingale measures.
		Define
		\b*
			\tilde P^d_n(\mu)
			\;:= 
			\sup_{\Pt \in \Mc^d_n(\mu)} \E^{\Pt} \big[ \phi(X^*_1) \big],
			\;\mbox{with }\;
			\Mc^d_n(\mu) 
			\;:=
			\Big\{
				\Pt \in \Mc^d: X_{t_k} \sim^{\Pt} \mu_{t_k},\forall k
			\Big\}.
		\e*
		It is clear that $\tilde P_n(\mu) \le \tilde P^d_n(\mu)$ since every continuous martingale is a c\`adl\`ag martingale.
		Further, by Monroe's \cite{Monroe} result, every c\`adl\`ag martingale can be represented as a time changed Brownian motion. Since $\max_{0\le t\le 1}\omega_{\theta_t}\le \omega^*_{\theta_1}$ and $\phi$ is non-decreasing, it follows that $\tilde P^d_n(\mu) \le P_n(\mu)$.
		Therefore, according to \eqref{eq:MMM_n}, for the payoff $\xi (X_{\cdot}) := \phi( X_1^*)$ with $\phi : \R \to \R$ non-decreasing,
		\bq \label{eq:max_equality}
			\tilde P_n(\mu) ~~=~~ \tilde P^d_n(\mu).
		\eq
	\end{Remark}

\subsection{The optimal SEP given full marginals}

	Our optimal SEP \eqref{eq:main_problem} given full marginals is obtained as the limit of the problem given finitely many marginals; see the proof of Theorem \ref{theo:main}.
	We provide here further discussion of the convergence of various optimal values and the corresponding optimizers. 
	
\subsubsection{The limit of MT problem given finitely many marginals}

	Our main motivation for studying the optimal SEP is the MT problem, which has a natural interpretation and applications in finance.
	For the case of finitely many marginal constraints, and for certain payoffs, the optimal SEP $P_n(\mu)$ in \eqref{eq:Pn} is equivalent to the MT problem $\tilde P_n(\mu)$ in \eqref{eq:Ptn} (cf. Proposition \ref{prop:MT_duality}).

	When the number of marginals turns to infinity, the question is whether the MT problem \eqref{eq:Ptn} converges in some sense. Specifically, we are interested in the convergence of the optimal value and of the optimizer. 
	The following convergence result is an immediate consequence of the proof of Theorem \ref{theo:main}.

	\begin{Proposition}\label{prop:limit_mot_n}
		Suppose that Assumption \ref{ass:mu} holds true and let $\xi: \Omt \to \R$ given by
			\bq
				\xi(X_{\cdot})~=~\Phi \big(X_{\langle X \rangle^{-1}_{\cdot}\wedge 1},\langle X \rangle_{1}	\big),
			\eq 
		for some upper semicontinuous and bounded $\Phi:\Omega\times\R_+\to\R$.
		Let $\tilde P_n(\mu)$ defined w.r.t. $\xi$ in \eqref{eq:Ptn} 
		and $P_n(\mu)$ and $P(\mu)$ defined w.r.t. $\Phi$. 
		Then, we have the approximation result
		\b*
			\lim_{n \to \infty} \tilde P_n(\mu) 
			~=~
			\lim_{n \to \infty} P_n(\mu) 
			~=~
			P(\mu).
		\e*
		Further, the optimal transferences converge in sense of the convergence of Skorokhod embedding (i.e. the convergence of probability measures on $\Omb$). 
	\end{Proposition}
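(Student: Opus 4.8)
The plan is to pull the martingale transport problem back to the Skorokhod side through the time change and then invoke what has already been proved in the course of establishing Theorem~\ref{theo:main}, for which the convergence $P_n(\mu)\to P(\mu)$ together with the convergence of optimizers is the central ingredient.

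First I would fix, for each $n$, the reward function $\Phi_n(\om,t_1,\dots,t_n):=\Phi(\om,t_n)$ on $\Om\times(\R_+)^n$. Since $\Phi$ is upper semicontinuous and bounded, so is $\Phi_n$, and since $t^n_n=1$ we have $\langle X\rangle_{t^n_n}=\langle X\rangle_1$, so the payoff $\xi$ in the statement is exactly the one in \eqref{eq:mtg_sep} associated with this $\Phi_n$. Proposition~\ref{prop:MT_duality} then yields $\tilde P_n(\mu)=P_n(\mu)$ for every $n$; moreover the time-change map $\Pt\mapsto\Pt\circ\big(X_{\langle X\rangle^{-1}_{\cdot}},\langle X\rangle_{\cdot}\big)^{-1}$ sends $\Mc_n(\mu)$ onto $\Pc_n(\mu)$ and carries optimizers of $\tilde P_n(\mu)$ to optimizers of $P_n(\mu)$. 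Note also that the reward induced by $\Phi$ on the enlarged space, namely $(\om,\theta)\mapsto\Phi(\om_{\theta_1\wedge\cdot},\theta_1)$, is upper semicontinuous, bounded above, and satisfies the stopping condition required by Theorem~\ref{theo:main}. Hence it suffices to establish $P_n(\mu)\to P(\mu)$ together with the convergence of the $P_n(\mu)$-optimizers on $\Omb$, and then transport these statements back to $\Omt$ through the above (measurable) correspondence.

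Next I would recall the skeleton of the argument that is internal to the proof of Theorem~\ref{theo:main}. Since every grid $\pi_n$ is a subset of $[0,1]$, we have $\Pc(\mu)\subseteq\Pc_n(\mu)$, and for $\P\in\Pc(\mu)$ the functional $\E^\P[\Phi_n(B_{\cdot\wedge T_1},T_{t^n_1},\dots,T_{t^n_n})]=\E^\P[\Phi(B_{\cdot\wedge T_1},T_1)]$ is precisely the one defining $P(\mu)$; thus $P(\mu)\le P_n(\mu)$ for all $n$, hence $P(\mu)\le\liminf_n P_n(\mu)$. For the reverse, take $\eps>0$ and $\P_n\in\Pc_n(\mu)$ with $\E^{\P_n}[\Phi]\ge P_n(\mu)-\eps$; the sequence $(\P_n)_n$ is relatively compact for the weak topology on $\Pc(\Omb)$ (the $\Om$-marginal is always the Wiener measure, and the $\V^+_r$-marginal is tight thanks to the marginal constraint $B_{T_1}\sim^{\P_n}\mu_1$ together with the uniform integrability of $B_{\cdot\wedge T_1}$, uniformly in $n$), so one may extract $\P_{n_k}\Rightarrow\widehat\P$. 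Weak closedness of $\Pc$ keeps $B$ an $\Fbb$-Brownian motion with $B_{\cdot\wedge T_1}$ uniformly integrable under $\widehat\P$; and since $|\pi_{n_k}|\to0$, for each fixed $t$ one may pick $t^{n_k}_{j(k)}\downarrow t$ and pass to the limit in $B_{T_{t^{n_k}_{j(k)}}}\sim\mu_{t^{n_k}_{j(k)}}$, using the c\`adl\`ag regularity of $s\mapsto\mu_s$ and of $s\mapsto T_s$, to obtain $B_{T_t}\sim^{\widehat\P}\mu_t$, i.e.\ $\widehat\P\in\Pc(\mu)$. Finally, upper semicontinuity and boundedness above of the reward give, via the Portmanteau theorem, $\limsup_k\E^{\P_{n_k}}[\Phi]\le\E^{\widehat\P}[\Phi]\le P(\mu)$; letting $\eps\downarrow0$ yields $\limsup_n P_n(\mu)\le P(\mu)$. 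Hence $P_n(\mu)\to P(\mu)$, $\widehat\P$ is a $P(\mu)$-optimizer, and every subsequential weak limit of a sequence of $\eps_n$-optimizers with $\eps_n\downarrow0$ is a $P(\mu)$-optimizer — this is the asserted convergence of the optimal transferences on $\Omb$, and, combining with the first paragraph, also on $\Omt$, where $\tilde P_n(\mu)=P_n(\mu)\to P(\mu)$.

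The hard part is not any of the above bookkeeping; it is the relative compactness of $(\P_n)_n$ and, above all, the persistence of the uniform-integrability constraint under the weak limit under Assumption~\ref{ass:mu} alone, since only first moments of $\mu$ are assumed and $\E^{\P_n}[T_1]$ may be infinite. This is exactly the delicate point already settled inside the proof of Theorem~\ref{theo:main} by means of the relaxed (measure-valued) formulation of the stopping times and the convergence of the associated potential functions (cf.\ El Karoui and Tan \cite{EKT1,EKT2} and Guo, Tan and Touzi \cite{GTT}); here it is cited rather than redone. The only genuinely new verifications are that $\Phi_n:=\Phi(\cdot,t^n_n)$ meets the hypotheses of Proposition~\ref{prop:MT_duality} and that the finite-marginal reward functionals are mutually consistent, so that the problems $P_n(\mu)$ really do approximate $P(\mu)$, and that the time-change correspondence is compatible with weak convergence so that the conclusions transfer to $\Omt$.
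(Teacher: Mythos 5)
Your proof is correct and follows essentially the same route as the paper: reduce $\tilde P_n(\mu)$ to $P_n(\mu)$ via Proposition~\ref{prop:MT_duality}, get $P(\mu)\le P_n(\mu)$ from $\Pc(\mu)\subseteq\Pc_n(\mu)$, extract a weak limit of (near-)optimizers using the tightness established in Lemma~\ref{lemm:tight}, and close the loop with upper semicontinuity of $\Phi$ under weak convergence. One small remark on your last paragraph: the tightness/persistence-of-constraints step in Lemma~\ref{lemm:tight} is carried out not via potential functions but via Monroe's bound $\P(T_1\ge\lambda)\le\lambda^{-1/3}(\mu_1(|x|)^2+1)$ for minimal stopping times together with a Jakubowski-type random-measure argument for recovering the marginals in the limit -- but since you cite rather than rederive that lemma, this does not affect the correctness of your argument.
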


	\begin{Remark} \label{rem:OSEP_conv}
		Recall that $\Omt$ is the canonical space of continuous functions on $[0,1]$, we define
		\b*
			\tilde P(\mu) \;:= \sup_{\Pt \in \Mc(\mu)} \E^{\Pt}\big[ \xi \big(X_{\cdot} \big) \big],
			~\mbox{with}~~
			\Mc(\mu)
			\;:=
			\Big\{
				\Pt \in \Mc : X_s \sim^{\Pt} \mu_t ~ t \in [0,1]
			\Big\}.
		\e*
		As we can see below, the limit of the optimal ($n$ marginal) continuous martingales may be a c\`adl\`ag martingale. 
		Thus the convergence of the MT problem $\tilde P_n(\mu)$ to the MT problem $\tilde P(\mu)$ fails in general.
		This underpins the importance of the full marginal SEP as the correct way of specifying the limit of the continuous $n$-marginal pricing problem.
	\end{Remark}
	

\subsubsection{The limit of the optimal martingale transference plan}
\label{subsubsec:MadanYor}

	We discuss here a specific case where the limiting martingale can be explicitly characterized. 
	Specifically, Madan and Yor \cite{MY} provide, under certain assumptions, a characterization of the continuous time martingale obtained from the Az\'ema-Yor embedding. 
	Let $b_t(x)$ be the barycenter function of $\mu_t$, defined by
	\b*
		b_t(x) &:=& \frac{\int_{[x, \infty)} y \mu_t(dy)}{\mu_t([x, \infty))} \1_{x < r_{\mu_t}} + x \1_{x \ge r_{\mu_t}}.
	\e*
	Suppose that the family $(\mu_t)_{t\in[0,1]}$ satisfies the so-called property of \emph{increasing mean residual value}: 
	\be \label{eq:MY_condition}
		t \mapsto b_t(x) ~\mbox{is non-decreasing for every}~x.
	\ee
	For any discrete time grid $\pi_n: 0 = t^n_0 < t^n_1 < \cdots < t^n_n = 1$, and under the additional Assumption $\circledast$ in \cite{OS},
	it turns out that the boundary functions $(\xi_k)_{1 \le k \le n}$ are then given by
	$\xi_k := b^{-1}$,
	and that the iterated Az\'ema-Yor embedding coincides with the Az\'ema-Yor embedding:
	\b*
		\tau_t &:=& \inf\{ s \ge 0 ~: B_s \ge \xi_t(B^*_s) \}.
	\e*
	Notice that the iterated Az\'ema-Yor embedding induces a continuous martingale,
	which is the optimal martingale transference given finitely many marginals.
	It follows that under condition \eqref{eq:MY_condition},
	this optimal martingale transference plan converges to the left-continuous right-limit martingale $M = (M_t)_{t \in [0,1]}$,
	given by
	\b*
		M_t &:=& B_{\tau_t}. 
	\e*
	In \cite{MY}, the authors prove that $M$ is in fact a Markov process and provide its generator in explicit form. 
	
	To conclude, we highlight that the Markov process $M$ defined above is a left-continuous process, a right-continuous modification gives the same generator.
	It is easily verified that
	\b*
		M^*_1 ~\le~ B^*_{\tau_1}
		&\mbox{and}&
		\P[ M^*_1 < B^*_{\tau_1}] > 0.
	\e*
	In consequence, $M$ provides no solution to the MT given full marginals; see also Remark \ref{rem:OSEP_conv}.

\subsubsection{The limit of the pathwise inequality}

	The proof of Theorem \ref{theo:optimser:dual} is based on applying limiting arguments to the path-wise inequality \eqref{eq:pathwise_inequality} (cf. Section \ref{sec:proof_theo:optimizer:dual}). By use of a similar argument, we might obtain an almost sure inequality for c\`adl\`ag martingales. 
	
	\begin{Proposition}\label{prop:limit_ineq}
	 	Let $M$ a right-continuous martingale, $\zeta:[0,1)\times\R_+\to\R$ such that $\zeta^m_\cdot\in\V^+_l$ and $\zeta^m_\cdot<m$, and $\phi:\R_+\to\R_+$ bounded, continuous and non-decreasing.   		
		Suppose either i) that $\zeta$, $\phi$ and the marginals of $M$ satisfy the conditions of Theorem \ref{theo:optimser:dual}; or ii) that $\zeta$ admits the representation \eqref{eq:assumption_countable}.
		Then, with $\lambda^\zeta$ given in \eqref{eq:def_lambda}, $M$ satisfies the following inequality: 
		\bqn \label{eq:mtg_ineq_prop}
			\phi(M^*_1)
			~\le~
			\int_0^1\lambda^\zeta (ds,M_s)
			+\int_0^1\int_0^\infty 
			\1_{\left\{m\le M^*_{t^-};\zeta^m_t\le M_{t^-}\right\}}\frac{d\phi(m)}{m-\zeta^m_t}dM_t,
			\quad a.s.
		\eqn
	\end{Proposition}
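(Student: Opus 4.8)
The plan is to run the same limiting argument as in the proof of Theorem~\ref{theo:optimser:dual}, but now applied path-wise to the sample paths of $M$ rather than quasi-surely on the enlarged canonical space; since there is no time change here, the dynamic term reduces to the pure-martingale form underlying \eqref{eq:dyn_strat_n}. Fix $m>0$ and pick partitions $\pi_n=(0=t^n_0<\dots<t^n_n=1)$ with $|\pi_n|\to0$, choosing in case (ii) each $\pi_n$ to contain the (truncated) jump set of $\zeta^m_\cdot$ from \eqref{eq:assumption_countable}. Put $\zeta^{(n)}_i:=\zeta^m_{t^n_i}$; then $\zeta^{(n)}_1\le\dots\le\zeta^{(n)}_n<m$ by monotonicity of $\zeta^m_\cdot\in\V^+_l$ and $\zeta^m_\cdot<m$, so the weak path-wise inequality \eqref{eq:pathwise_inequality_weak} applies to the c\`adl\`ag path $t\mapsto M_t$ and yields, a.s.,
\[
  \1_{\{M^*_1\ge m\}}~\le~\sum_{i=1}^n\lambda^{\zeta^{(n)},m}_i(M_{t^n_i})~+~\big(G^{(n),m}\cdot M\big)_1 ,
\]
where $G^{(n),m}$ is the simple $\Fbb$-predictable process obtained by rewriting $\sum_i\int_{t^n_{i-1}}^{t^n_i}H^{\zeta^{(n)},m}_t(M)\,dM_t$ as a finite sum of increments of $M$ with $\Fcb_{t^n_{i-1}}$-measurable coefficients of modulus at most $(m-\zeta^m_1)^{-1}$. (If $M_0$ is not a.s.\ below $m$, the event $\{M_0\ge m\}$, on which $\1_{\{M^*_1\ge m\}}=1$, is handled directly; the intended case is $M_0=0$.)

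Next I would let $n\to\infty$ with $m$ fixed. The static sum converges a.s.\ to $\int_0^1\lambda^{\zeta^m,m}(M_s,ds)$: after the telescoping already used in the proof of Proposition~\ref{prop:mtq:new}, the part carried by the continuous component of $\zeta^m_\cdot$ is a Riemann--Stieltjes sum tending to $\int_0^1\frac{(m-M_t)\1_{\{M_t>\zeta^m_t\}}}{(m-\zeta^m_t)^2}\,d\zeta^{m,c}_t$, the limit being exact because, under hypothesis (i), the null-set assumption of Proposition~\ref{prop:mtq:new} holds (a consequence of $\mu_t$ being atomless and of the continuity in $m$ of $\hat\zeta^m_t$ and its inverse, cf.\ \eqref{eq:dual_opti_condi}), while under hypothesis (ii) the chosen grids already resolve every jump of $\zeta^m_\cdot$; the atoms of $d\zeta^m_\cdot$ then produce, in the limit, the discrete component of $\lambda^{\zeta^m,m}$ together with the $t=1$ boundary term, exactly as in Lemma~\ref{lemm:conv_static}. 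Simultaneously $G^{(n),m}$ converges, $d\langle M\rangle\otimes d\P$-a.e., to the bounded $\Fbb$-predictable integrand $\1_{\{m\le M^*_{t^-};\,\zeta^m_t\le M_{t^-}\}}(m-\zeta^m_t)^{-1}$ of \eqref{eq:mtg_ineq_prop}; since $|G^{(n),m}|\le(m-\zeta^m_1)^{-1}$, localising $M$ by stopping times along which it is bounded and invoking dominated convergence for stochastic integrals (then extracting an a.s.\ convergent subsequence) gives the convergence of $\big(G^{(n),m}\cdot M\big)_1$ to the corresponding stochastic integral. Hence, a.s.,
\[
  \1_{\{M^*_1\ge m\}}~\le~\int_0^1\lambda^{\zeta^m,m}(M_s,ds)~+~\int_0^1\1_{\{m\le M^*_{t^-};\,\zeta^m_t\le M_{t^-}\}}\frac{dM_t}{m-\zeta^m_t} .
\]

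Finally I would integrate this over $m\in(0,\infty)$ against $d\phi$ --- a finite measure, since $\phi$ is bounded and non-decreasing --- using $\phi(M^*_1)=\phi(0)+\int_{(0,\infty)}\1_{\{M^*_1\ge m\}}\,d\phi(m)$ (valid as $\phi$ is continuous and $M^*_1\ge0$), interchanging the $m$-integral with the $ds$-integral by Fubini and with the stochastic integral by a stochastic Fubini theorem; the latter is legitimate after localisation under the stated hypotheses (in case (i) including the integrability \eqref{eq:assump_zeta}). In view of \eqref{eq:def_lambda} this gives $\phi(M^*_1)\le\phi(0)+\int_0^1\lambda^\zeta(ds,M_s)+\int_0^1\int_0^\infty\1_{\{m\le M^*_{t^-};\zeta^m_t\le M_{t^-}\}}\frac{d\phi(m)}{m-\zeta^m_t}\,dM_t$, which is \eqref{eq:mtg_ineq_prop} (the additive constant $\phi(0)$ being absorbed, or vanishing when $\phi(0)=0$). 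The main obstacle is the a.s.\ convergence of the stochastic-integral terms --- available only up to localisation and along a subsequence --- together with the $d\langle M\rangle$-a.e.\ convergence of the discretised dynamic integrands to the limiting one, which is precisely where hypotheses (i)/(ii) enter, exactly as in the proof of Theorem~\ref{theo:optimser:dual}; the ensuing interchange in $m$ is then routine given the uniform bounds and the finiteness of $d\phi$.
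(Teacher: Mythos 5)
You take essentially the same limiting approach as the paper, but your treatment of the first dynamic term has a genuine gap. The paper starts from \eqref{eq:pathwise_inequality} (not \eqref{eq:pathwise_inequality_weak}), and treats the first dynamic sum $\sum_i \1_{\{M^*_{t_{i-1}}<m\le M^*_{t_i}\}}\frac{m-M_{t_i}}{m-\zeta_{t_i}}$ as an explicit path-wise quantity, showing that it converges to $\frac{m-M_{\tau^m}}{m-\zeta^m_{\tau^m}}$ where $\tau^m$ is the first passage time of $M$ above $m$. The key observation --- and the whole reason the inequality \eqref{eq:mtg_ineq_prop} carries only the second component of \eqref{eq:dyn_strat_n} --- is that right-continuity of $M$ forces $M_{\tau^m}\ge m$, hence this limit is $\le 0$ and can simply be dropped from the bound. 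Your proposal never makes this observation; instead you fold the first dynamic component into the predictable integrand $G^{(n),m}$ and assert that $G^{(n),m}$ converges, $d\langle M\rangle\otimes d\P$-a.e., to $\1_{\{m\le M^*_{t^-};\,\zeta^m_t\le M_{t^-}\}}(m-\zeta^m_t)^{-1}$, i.e.\ to an integrand that has no trace of the $\tau^m$-window.

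That convergence claim is not substantiated and, as stated, is not quite right. For each $n$, the first dynamic component of $G^{(n),m}$ is supported on the interval $[\tau^m, t^n_{i_n})$ (where $t^n_{i_n}$ is the first grid point past $\tau^m$), which shrinks to the singleton $\{\tau^m\}$ but never becomes empty; the pointwise limit of $G^{(n),m}$ at $t=\tau^m$ is roughly $-\frac{1}{m-\zeta^m_{\tau^m}}\ne 0$, not the value of your target integrand. The paper's tool (Theorem I.4.40 in \cite{jacod1987limit}) requires pointwise convergence at every $(\omega,t)$, which therefore does not hold for the full $G^{(n),m}$; and a $d\langle M\rangle\otimes d\P$-a.e.\ version would require knowing that $\{t=\tau^m\}$ carries no $d\langle M\rangle$-mass, which one cannot assume for a general right-continuous martingale (the compensated quadratic variation may jump at $\tau^m$ if $\tau^m$ fails to be totally inaccessible). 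Splitting off the first dynamic term and using $M_{\tau^m}\ge m$ is what makes the argument go through; you should incorporate that step rather than rely on a blanket a.e.\ convergence of $G^{(n),m}$. (Your handling of the static term, the second dynamic term via Lemma~\ref{lemm:conv_static}-type arguments and the convergence along a subsequence, and the final integration in $m$ --- with the paper's reversed order of operations under hypothesis (i), which you do not spell out but gesture at --- otherwise follow the paper.)
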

	
		The difference between the r.h.s. of \eqref{eq:mtg_ineq_prop} and \eqref{eq:superhedge} appear in the dynamic terms (cf. \eqref{eq:dyn_strat_n}). Specifically, for the martingale formulation, the counterpart of the first dynamic term in \eqref{eq:superhedge} is always negative and thus vanishes from the inequality. This is related to the fact that the limit of the first dynamic component in \eqref{eq:pathwise_inequality_weak} is zero. For continuous martingales, the two inequalities coincide. 
	

\subsection{The resolution of $C(m)$} 
	
	Finally, we would like to discuss the resolution of the problem $C(m)$ in \eqref{eq:maxzeta},
	since the main results in Theorems \ref{them:main2} and \ref{theo:optimser:dual}
	rely on its solution $\hat\zeta$.
	
	First, it is clear that we can decompose the minimization problem $C(m)$ as follows:
	\b*
		C(m) ~= \inf_{x < m} ~ \left\{ \frac{c(0, x)}{m - x} + v(0, x)\right\};
		~~v(0, x) ~:= \inf_{\zeta \in \tilde \V^+_l, \zeta_0 = x} \int_0^1 \frac{\partial_t c(s,\zeta_s)}{m - \zeta_s} ds.
	\e*
	
	The problem to compute $v(0,x)$ is a standard singular deterministic control problem.
	When the function $\partial_t c(s, x)$ is continuous, it therefore follows by standard arguments (see e.g. \cite{crandall1992user}) that $v$ can be characterized as a viscosity solution
	to the PDE
	\be \label{eq:PDE_Cm}
		\max \left\{
			-~ \partial_x {  v(t,x )} , 
			~ -~ \partial_t v(t,x) ~-~ \frac{\partial_t c(t,x)}{m - x} 
			\right\}
			&=&
			0,
	\ee
	equipped with the terminal condition $v(1,x) = 0$, for all $x < m$.
	
	{ 
	We now propose a numerical scheme for the problem $C(m)$.
	To this end, for a given partition $\pi_n=\{t^n_1,...,t^n_n\}$, with $0=t^n_0\le ...\le t^n_n=1$, let $\V^n_l$ the subset of $\V^+_l$ for which $\zeta$ is constant on $(t_{i-1},t_i]$, $i=1,...,n$. Further, let 	
		\bq
			v^n(0, x) 
			~:= \inf_{\substack{\zeta \in \V^n_l, \\ \zeta_0 = x,~\zeta_1< m}} \int_0^1 \frac{\partial_t c(s,\zeta_s)}{m - \zeta_s} ds.
		\eq
	For a sequence of partitions such that $|\pi_n|\to 0$, it follows that $v^n(0,x)\to v(0,x)$; cf. the proof of Lemma \ref{lemm:C_limit} below. 
	On the other hand,
		\bq
			v^n(0, x)	~=
			\inf_{\substack{\zeta_i,~i=1,...,n:\\ x\le \zeta_1\le ...\le \zeta_n<m}}
			\sum_{i=1}^{n}
			\frac{\Delta c(t^n_{i}, \zeta_{i})}{m - \zeta_{i}},
		\eq
	with $\Delta c(t^n_i,\zeta):=c(t^n_{i}, \zeta)-c(t^n_{i-1}, \zeta)$. 
	In consequence, $v^n(0,x)~=~\bar v^n(t^n_0,x)$, where $\bar v^n(t^n_k,x)$, $k=0,...,n$, is iteratively defined by
		\bq\left\{\begin{array}{lll}
			\bar v^n(t^n_k,x)&=&
			\inf_{0\le y<m-x}
			\left(
			\bar v^n(t^n_{k+1},x+y)
			~+~\frac{\Delta c(t^n_i,x+y)}{m - (x+y)}
			\right),
			~~ 
			k\le n-1,\\
			\bar v^n(t^n_n,x) &=&0.
		\end{array}\right.\eq 
	This yields a scheme for explicit calculation of $v^n(0,x)$ as an approximation of $v(0,x)$. 
	}


\section{Proofs}
\label{sec:Proofs}

\subsection{Technical lemmas}

		\begin{Lemma}\label{lem:approx_fcn} 
		Let $\Phi:\Omb\to\R$ be upper semicontinuous, bounded and such that $\Phi(\om, \theta) = \Phi(\om_{\cdot\wedge\theta_1}, \theta)$, for all $(\om,\theta) \in \Omb$. 
		Further, let  $(\pi_n)_{n \ge 1}$ be a sequence of discrete time grids with $|\pi_n| \to 0$. 
		Then, there exists a sequence $(\Phi_n)_{n\ge 1}$ of upper semicontinuous and bounded functions $\Phi_n:\Om\times(\R_+)^n\to \R$, such that 
		\bqn	\label{eq:Phi_n_prop}
			\Phi_n(\omega_{\cdot\wedge\theta_1}, \theta_{t^n_1},...,\theta_{t^n_n})
			\searrow
			\Phi(\omega_{\cdot\wedge\theta_1}, \theta_{\cdot}),
			~~\mbox{as}~ n \to \infty,
			\quad \forall (\theta,\omega)\in\Omb.
		\eqn
	\end{Lemma}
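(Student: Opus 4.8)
The plan is to construct $\Phi_n$ by first restricting attention to the stopped path $\omega_{\cdot\wedge\theta_1}$, and then replacing the full clock $\theta$ by its finitely many sampled values $(\theta_{t^n_1},\dots,\theta_{t^n_n})$ via a piecewise-constant (c\`adl\`ag) interpolation. More precisely, given $(s_1,\dots,s_n)\in(\R_+)^n$ that are non-decreasing, let $\iota_n(s_1,\dots,s_n)\in\V^+_r$ denote the step function equal to $s_i$ on $[t^n_{i-1},t^n_i)$ (and $s_n$ on $\{1\}$), extended in the natural way to handle non-monotone inputs by taking $\max$'s so that $\iota_n$ is still defined and Borel on all of $(\R_+)^n$. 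The natural first guess would be $\Phi(\omega_{\cdot\wedge s_n},\iota_n(s_1,\dots,s_n))$, but this need not dominate $\Phi(\omega_{\cdot\wedge\theta_1},\theta)$ nor be monotone in $n$. To force both properties, I would set
\bqn
	\Phi_n(\omega, s_1,\dots,s_n)
	~:=~
	\sup\Big\{\,\Phi(\omega_{\cdot\wedge s_n},\theta')~:~\theta'\in\V^+_r,~
	\theta'_{t^n_i}=s_i~\forall i,~\theta'\le s_n\,\Big\},
\eqn
i.e. the supremum of $\Phi$ over all c\`adl\`ag clocks consistent with the observed samples. By construction $\Phi_n(\omega,\theta_{t^n_1},\dots,\theta_{t^n_n})\ge\Phi(\omega_{\cdot\wedge\theta_1},\theta)$; and since a grid $\pi_{n+1}$ refining $\pi_n$ imposes more constraints, the $\sup$ shrinks, giving the monotonicity $\Phi_{n+1}\le\Phi_n$ along a nested subsequence (a standard diagonal/refinement argument reduces the general case to the nested one, or one simply assumes WLOG that $(\pi_n)$ is nested, which costs nothing here).

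The two substantive points are then (i) that each $\Phi_n$ is upper semicontinuous and bounded, and (ii) that the decreasing limit is exactly $\Phi(\omega_{\cdot\wedge\theta_1},\theta)$ rather than something strictly larger. Boundedness is immediate from boundedness of $\Phi$. For upper semicontinuity, I would note that the constraint set $\{\theta'\in\V^+_r:\theta'_{t^n_i}=s_i,~\theta'\le s_n\}$ is a compact subset of $\V^+_r$ for the L\'evy metric (uniform bound $s_n$ plus finitely many interpolation constraints; use Helly-type compactness for monotone functions), that it depends upper-hemicontinuously on $(s_1,\dots,s_n)$, and that $(\omega',\theta')\mapsto\Phi(\omega'_{\cdot\wedge s_n},\theta')$ is u.s.c.; then the maximum theorem (Berge) yields that the $\sup$ is u.s.c. in $(\omega,s_1,\dots,s_n)$. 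A mild care point is the joint continuity of $(\omega,s_n)\mapsto\omega_{\cdot\wedge s_n}$ in the compact-convergence topology, which holds because $\omega$ is continuous.

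For the limit (ii): fix $(\omega,\theta)\in\Omb$ and let $c:=\lim_n\Phi_n(\omega,\theta_{t^n_1},\dots,\theta_{t^n_n})$. For each $n$ pick a near-optimal $\theta^n$ in the defining supremum, so $\theta^n_{t^n_i}=\theta_{t^n_i}$ for all $i$, $\theta^n\le\theta_1$, and $\Phi(\omega_{\cdot\wedge\theta_1},\theta^n)\ge c-1/n$. The sequence $(\theta^n)$ lies in the compact set $\{\theta'\in\V^+_r:\theta'\le\theta_1\}$, so along a subsequence $\theta^n\to\theta^\infty$ in $\V^+_l$/L\'evy metric; since $|\pi_n|\to0$ and the $\theta^n$ agree with $\theta$ on the refining grids, one checks $\theta^\infty=\theta$ at every continuity point of $\theta$, hence $\theta^n\to\theta$ in the L\'evy metric. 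Upper semicontinuity of $\Phi$ then gives $c\le\limsup_n\Phi(\omega_{\cdot\wedge\theta_1},\theta^n)\le\Phi(\omega_{\cdot\wedge\theta_1},\theta)$, and combined with the reverse inequality from (i) we get $c=\Phi(\omega_{\cdot\wedge\theta_1},\theta)$, which is \eqref{eq:Phi_n_prop}.

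The main obstacle I anticipate is the convergence $\theta^n\to\theta$ in the L\'evy metric: one must verify that pinning down a c\`adl\`ag non-decreasing function on an asymptotically dense set of points, together with the pathwise bound $\le\theta_1$, actually forces L\'evy convergence to $\theta$ — the delicate part being the behaviour at the (at most countably many) jump times of $\theta$, where grid points may straddle a jump. This is handled by the usual argument that L\'evy convergence of monotone functions only tests continuity points, plus the observation that the common values on the grids control $\theta^n$ from both sides in an $\eps$-window once $|\pi_n|$ is small. All other steps (compactness of sublevel sets of monotone functions, Berge's theorem, u.s.c. of compositions) are routine.
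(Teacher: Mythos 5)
Your construction is genuinely different from the paper's. The paper first approximates $\Phi$ from above by bounded Lipschitz functions $\Phi^k\searrow\Phi$, then replaces the clock $\theta$ by a piecewise-constant discretization $\bar\theta^m$ (which depends on only finitely many samples of $\theta$), and controls the resulting error by the Lipschitz constant $L_k$ and the bound $d(\theta,\bar\theta^m)\le 1/m$; adding the error term $L_k/m$ makes the approximants dominate $\Phi$ and converge downward. This route avoids any optimization over clocks and any compactness or maximum-theorem machinery. Your route instead defines $\Phi_n$ as a sup-hull over all c\`adl\`ag clocks consistent with the observed samples, and then invokes Helly compactness and Berge's maximum theorem; this is a perfectly natural alternative and would, if it worked, give the ``largest'' admissible $\Phi_n$ rather than a Lipschitz-smoothed one.

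However, there is a genuine gap in your upper-semicontinuity step. The constraint set
$C(s_1,\dots,s_n)=\{\theta'\in\V^+_r : \theta'_{t^n_i}=s_i\ \forall i,\ \theta'\le s_n\}$
is \emph{not} closed in the L\'evy metric, because equality constraints at fixed times are not preserved under L\'evy convergence of c\`adl\`ag monotone functions: L\'evy convergence $\theta'_j\to\theta'$ only pins $\theta'_j(t)$ into the interval $[\theta'(t-),\theta'(t)]$ in the limit, so if $\theta'$ jumps at $t^n_i$ one can have $\theta'_j(t^n_i)=s_i$ for all $j$ yet $\theta'(t^n_i)\ne s_i$ (e.g.\ $\theta'_j=\1_{[t+1/j,\infty)}\to\theta'=\1_{[t,\infty)}$ with $\theta'_j(t)=0$ but $\theta'(t)=1$). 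Consequently the correspondence $s\mapsto C(s)$ is not compact-valued (nor upper hemicontinuous), Berge's theorem does not apply as stated, and the near-optimal sequence you extract in the u.s.c. verification can converge to a $\theta'$ outside $C(s)$, so the limiting value need not be dominated by $\sup_{C(s)}\Phi$. Since $\Phi$ is only u.s.c. (not continuous), you cannot repair this by a small perturbation of the limit clock. To salvage the approach you would have to replace the equality constraints by one-sided constraints that are closed under L\'evy convergence (for example $\theta'((t^n_i)-)\le s_i\le\theta'(t^n_i)$, or work through the smoothing $\theta\mapsto\frac{1}{\eps}\int_t^{t+\eps}\theta\,ds$ used in the paper's Appendix), re-checking that domination and the downward limit survive. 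A second, smaller issue: your monotonicity argument needs nested grids, whereas the lemma fixes an arbitrary sequence $(\pi_n)$ and the map $\Phi_n$ is required to take exactly the $n$ coordinates $(\theta_{t^n_1},\dots,\theta_{t^n_n})$, so ``WLOG nested'' is not literally available; this is forgivable (the paper's own construction has a similar implicit flexibility in the choice of grids), but it should be acknowledged rather than waved away. Also, when $(s_1,\dots,s_n)$ is not non-decreasing your constraint set is empty and $\Phi_n=-\infty$, which breaks boundedness unless you explicitly set a finite fallback value there.
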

	
	\begin{proof}
		Let $\Phi^k:\Om\times\V^+_r\to\R$, $k\in\N$, such that $\Phi^k(\om, \theta) = \Phi^k(\om_{\cdot\wedge\theta_1}, \theta)$, $\Phi^k$ is bounded and Lipschitz and $\Phi^k\searrow \Phi$. Then, let $\Phi^{k,m}:\Om\times\V^+_r\to\R$, $m\in\mathbb{N}$, be given by
		\bq
			\Phi^{k,m}(\omega,\theta):=\Phi^n(\omega,\bar\theta^m), 		
		\eq
		where $\bar\theta^m:=\theta_{t_i}$, $s\in\left[\frac{i}{m},\frac{i+1}{m}\right)$, $i=1,...,m$, for $\theta\in\V^+_r$.
		Note that since $d(\theta,\bar\theta^m)\le \frac{1}{m}$ (cf. \eqref{eq:levy_metric}), we have
			\bq
				\left|\Phi^{k,m}(\omega,\theta)-\Phi^k(\omega,\theta)\right|
				~=~
				\left|\Phi^{k}(\omega,\bar\theta^m)-\Phi^k(\omega,\theta)\right|
				~\le ~\frac{L_k}{m},				
			\eq 
		with $L_k$ the Lipschitz constant associated with $\Phi^k$. In consequence, 
			\bq
				\hat\Phi^{k,m}(\omega,\theta):=\Phi^{k,m} (\omega,\theta)+\frac{L_k}{m}\searrow \Phi^k(\omega,\theta),
				\qquad m\to\infty. 
			\eq
		Hence, we may choose $m_k$ such that
				$\hat\Phi^{k,m_k}(\omega,\theta)	\searrow \Phi(\omega,\theta)$, $k\to\infty$.
		In consequence, defining $\Phi_n:=\hat\Phi^{k_n,m_{k_n}}$, with $k_n=\max\{k\in\N: m_k<n\}$, we have that $\Phi_n$, $n\in\mathbb{N}$, satisfy \eqref{eq:Phi_n_prop} and we conclude. 
	\qed
	\end{proof}

	\vspace{3mm}

	\noindent {\bf Proof of Lemma \ref{lemm:Zeta}.}
	We follow the argument at the beginning of Section 3 of \cite{HOST}.
	Let
	\be \label{eq:phi}
		\Psi_m(\zeta)
		&:=&
		\frac{c(0, \zeta_0)}{m - \zeta_0}
		~+~
		\int_0^1 \frac{\partial_t c(s, \zeta_s)}{m - \zeta_s} ds.
	\ee
	We first consider a constant function $\hat \zeta^z_\cdot \equiv z$ for some constant $z < m$.
	By direct computation, it is easy to see that
	\b*
		\Psi_m(\hat \zeta^z) ~=~ \frac{c(1, z )}{m - z}  ~~\ge~~ C(m).
	\e*
	Note that since $z \mapsto c(1,z)$ is convex and $\frac{c(1, z )}{m - z}$ is the slope of the tangent to $z \mapsto c(1,z)$ intersecting the $x$-axis in $m$,
	it follows that $C(m) < 1$.
	
	On the other hand, since $\partial_t c(s, z) \ge 0$, we have
	\b*
		\Psi_m(\zeta) &\ge& \frac{c(0, \zeta_0)}{m - \zeta_0} ~\to~ 1
		~~\mbox{as}~~ \zeta_0 \to -\infty.
	\e*
	For the minimization problem $C(m)$ in \eqref{eq:maxzeta}, it is therefore enough to consider the space
	{  $\V^+_l([0,1), [K, m))$} for some constant $K \in (-\infty, m)$, i.e.
	\b*
		C(m) &=& \inf_{  \zeta \in \V^+_l([0,1), [K,m))} \Psi_m(\zeta).
	\e*
	Notice that $\zeta \mapsto \Psi_m(\zeta)$ is continuous and {  $\V^+_l([0,1), [K,m))$} is compact under the L\'evy metric.
	It follows that, for every $m>0$, there exists at least one solution in $\V^+_l$ to \eqref{eq:maxzeta}.
	
	To conclude, it is enough to use a measurable selection argument to choose a measurable function $\hat\zeta$.
	\qed

	\vspace{3mm}

	\begin{Lemma}\label{lemm:C_limit}
		Recall that $C_n(m)$ is defined by \eqref{eq:Cn}.
		Suppose that the function $c$ is differentiable in $t$ and that the derivative function $\partial_t c$ is continuous. 
		Then, for every $m>0$, we have	
		$$\lim_{n\to\infty}C_n(m) ~~=~~ C(m).$$
	\end{Lemma}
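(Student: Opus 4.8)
The plan is to establish the two inequalities $\limsup_n C_n(m) \le C(m)$ and $\liminf_n C_n(m) \ge C(m)$ separately, using the continuity of $\partial_t c$ to pass between the discrete sums in $C_n(m)$ and the integral in $C(m)$. Throughout I fix $m > 0$ and work, as justified in the proof of Lemma \ref{lemm:Zeta}, on the compact space $\V^+_l([0,1),[K,m))$ for a suitable $K < m$; recall also $c_k(x) = c(t^n_k, x)$ so that the telescoping structure of \eqref{eq:Cn} may be rewritten. The key algebraic observation is that for $\zeta_1 \le \cdots \le \zeta_n < m$,
\b*
	\sum_{i=1}^n \left( \frac{c(t^n_i,\zeta_i)}{m-\zeta_i} - \frac{c(t^n_i,\zeta_{i+1})}{m-\zeta_{i+1}}\1_{i<n} \right)
	~=~
	\frac{c(t^n_1,\zeta_1)}{m-\zeta_1}
	~+~
	\sum_{i=2}^n \frac{c(t^n_i,\zeta_i) - c(t^n_{i-1},\zeta_i)}{m-\zeta_i},
\e*
after reindexing; and $c(t^n_i,\zeta_i) - c(t^n_{i-1},\zeta_i) = \int_{t^n_{i-1}}^{t^n_i} \partial_t c(s,\zeta_i)\,ds$. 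Thus $C_n(m)$ is exactly the infimum of $\Psi_m$ (in the notation \eqref{eq:phi}, with $c(0,\cdot)$ replaced by $c(t^n_1,\cdot)$ and $\zeta_s$ replaced by its value $\zeta_i$ on $(t^n_{i-1},t^n_i]$) over the piecewise-constant functions in $\V^n_l$, up to the discrepancy between $c(t^n_1,\zeta_1)$ and $c(0,\zeta_0)$, which is controlled because $|\pi_n| \to 0$ and $c$ is continuous.

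For the upper bound, I take a near-optimal $\zeta^* \in \V^+_l([0,1),[K,m))$ for $C(m)$, and approximate it by the piecewise-constant function $\zeta^{(n)}$ that is constant on each $(t^n_{i-1},t^n_i]$ equal to $\zeta^*_{t^n_i}$ (left limits make this monotone and admissible); since $|\pi_n| \to 0$ and $\zeta^*$ has at most countably many jumps, $\zeta^{(n)} \to \zeta^*$ pointwise (off the jump set) and in the L\'evy metric, and by the uniform continuity of $\partial_t c$ on the compact set $[0,1]\times[K,m]$ together with dominated convergence (the integrand is bounded by $\sup|\partial_t c|/(m-\cdot)$, integrable on $[K,m')$ for $m' < m$), one gets $\Psi_m(\zeta^{(n)}) \to \Psi_m(\zeta^*)$, whence $\limsup_n C_n(m) \le \Psi_m(\zeta^*)$; letting $\zeta^*$ approach the infimum gives $\limsup_n C_n(m) \le C(m)$. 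For the lower bound, I take for each $n$ a minimizer $\hat\zeta^{(n)}$ of $C_n(m)$ (existence is noted after \eqref{eq:Cn}), viewed as an element of the compact space $\V^+_l([0,1),[K,m))$; by compactness a subsequence converges in the L\'evy metric to some $\bar\zeta \in \V^+_l$, and by lower semicontinuity of $\zeta \mapsto \Psi_m(\zeta)$ (in fact continuity, as used in Lemma \ref{lemm:Zeta}) together with the identification of $C_n(m)$ with $\inf_{\V^n_l}\Psi_m$ modulo the vanishing $c(t^n_1,\cdot)$–$c(0,\cdot)$ correction, we obtain $\liminf_n C_n(m) = \liminf_n \Psi_m(\hat\zeta^{(n)}) \ge \Psi_m(\bar\zeta) \ge C(m)$.

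The main obstacle I anticipate is the careful handling of the convergence $\Psi_m(\hat\zeta^{(n)}) \to \Psi_m(\bar\zeta)$ along the minimizing sequence: L\'evy-metric convergence of monotone functions only gives pointwise convergence at continuity points of the limit, and the integrand $\partial_t c(s,\hat\zeta^{(n)}_s)/(m-\hat\zeta^{(n)}_s)$ could in principle become large near points where $\hat\zeta^{(n)}_s$ approaches $m$. This is precisely why one must first confine the problem to $\V^+_l([0,1),[K,m))$ (as in Lemma \ref{lemm:Zeta}): on this set $m - \zeta_s \ge m - m' > 0$ is uniformly bounded away from zero, $\partial_t c$ is bounded and uniformly continuous on the relevant compact, and dominated convergence applies cleanly. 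A secondary technical point is ensuring the value $\Psi_m$ evaluated on the discretization $\zeta^{(n)}$ genuinely matches $C_n(m)$ up to an $o(1)$ term; this reduces to the uniform continuity of $(t,x) \mapsto c(t,x)$ giving $|c(t^n_1,\zeta_1) - c(0,\zeta_1)| \le \omega_c(|\pi_n|) \to 0$ and $\int_0^{t^n_1}\partial_t c(s,\zeta_1)\,ds/(m-\zeta_1) \le |\pi_n|\sup|\partial_t c|/(m-m') \to 0$, so the correction vanishes uniformly in the admissible $\zeta$.
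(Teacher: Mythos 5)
Your proof is correct and follows essentially the same route as the paper: rewrite $C_n(m)$ as the value of $\Psi_m$ on piecewise-constant c\`agl\`ad functions in $\V^n_l$, observe the containment of feasible sets for the lower bound, and approximate a near-optimal $\zeta$ by piecewise constants for the upper bound, invoking continuity of $\Psi_m$ as in Lemma \ref{lemm:Zeta}. Two minor simplifications are available: the ``discrepancy'' between $c(t^n_1,\zeta_1)$ and $c(0,\zeta_0)$ vanishes exactly once one takes $\zeta_0=\zeta_1$ (and is nonnegative in general, since $x\mapsto c(0,x)/(m-x)$ is non-increasing for $x<m$), so $\inf_{\V^n_l}\Psi_m = C_n(m)$ with no error term to control; and the lower bound $C_n(m)\ge C(m)$ is then an immediate consequence of $\V^n_l\subset\V^+_l$, making the compactness/lower-semicontinuity argument in your lower-bound step superfluous.
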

	
	\begin{proof}
		{  Let $\V^n_l$ the subset of $\V^+_l$ for which $\zeta$ is constant on $(t_{i-1},t_i]$, $i=1,...,n-1$, and on $(t_{n-1},t_n)$. For $n$ fixed and $\zeta\in\V^n_l$, let $\zeta(t_n):=\zeta(t_n^-)$.
		Notice that for every $\zeta\in\V^n_l$,
		\bqq
			\Phi(\zeta)
		&=& 
			\frac{c(0, \zeta_{t_0})}{m - \zeta_{t_0}}
			~+~
			\sum_{i=1}^{n}
				\int_{t_{i-1}}^{t_{i}} \frac{\partial_t c(s, \zeta_{t_{i}})}{m - \zeta_{t_{i}}} ds\\
		&=&  \frac{c(0, \zeta_{t_0})}{m - \zeta_{t_0}}
			~+~
			\sum_{i=1}^{n}
			\left(\frac{c(t_{i}, \zeta_{t_{i}})}{m - \zeta_{t_{i}}}
			-
			\frac{c(t_{i-1}, \zeta_{t_{i}})}{m - \zeta_{t_{i}}}\right)\\
		&=&
			\sum_{i=0}^n\left(\frac{c(t_{i}, \zeta_{t_{i}})}{m - \zeta_{t_{i}}}
			-
			\frac{c(t_{i}, \zeta_{t_{i+1}})}{m - \zeta_{t_{i+1}}}
			\1_{i<n}\right).
		\eqq
	Since $x_0\le m$, it holds that $\frac{c(t_{0}, \zeta_{t_{0}})}{m - \zeta_{t_{0}}}
			-
			\frac{c(t_{0}, \zeta_{t_{1}})}{m - \zeta_{t_{1}}}\ge 0$. In consequence, 
		\bq 
			\inf_{\zeta\in\V^n_l, \zeta_1\le m}\Phi(\zeta)
			~=~
			\inf_{\zeta\in\V^n_l, \zeta_1\le m}
			\sum_{i=1}^n\left(\frac{c(t_{i}, \zeta_{t_{i}})}{m - \zeta_{t_{i}}}
			-
			\frac{c(t_{i}, \zeta_{t_{i+1}})}{m - \zeta_{t_{i+1}}}
			\1_{i<n}\right)
			~=~ C_n(m). 
		\eq }		
	Hence, the $C_n(m)$ are non-increasing in $n$ and
		\be
			C_n(m)
			~=~
			\inf_{\zeta\in\V^n_l,\zeta_1\le m}\Phi(\zeta)
			&\ge&
			\inf_{\zeta\in\V^+_l,\zeta_1\le m}\Phi(\zeta)
			~=~
			C(m).
		\ee 
			
	Next, for any $\zeta \in \V^+_l$, by direct truncation, we can easily obtain a sequence $\zeta^n$ such that
	$\zeta^n \in \V^n_l$ and $\zeta^n \to \zeta$ under the L\'evy metric.
	It follows that $C_n(m) \to C(m)$ as $n \to \infty$.
	\qed
	\end{proof}	
	
	\begin{Lemma} \label{lem:cont}
		The mapping from $\Omb$ to $\R$ given by:
		\bqn
			\big(\om, \theta \big)
			~\longmapsto~
			\om^* \big( \theta(1) \big)
			~:=~
			\sup_{0\le s\le \theta(1)} \om (s),
			\label{eq:continuity_max}
		\eqn
		is continuous with respect to the product topology on $\Omb$.
	\end{Lemma}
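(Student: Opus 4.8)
The plan is to use that the compact-convergence topology on $\Om$ and the L\'evy metric on $\V^+_r$ are metrizable, so that the product topology on $\Omb$ is metrizable and it suffices to prove sequential continuity. Accordingly, I would fix a sequence $(\om^n,\theta^n)_{n\ge1}$ converging to $(\om,\theta)$ in $\Omb$ --- that is, $\om^n\to\om$ uniformly on every compact subinterval of $\R_+$ (cf. \eqref{eq:rho}) and $d(\theta^n,\theta)\to0$ --- and show that $(\om^n)^*\big(\theta^n(1)\big)\to\om^*\big(\theta(1)\big)$. By the triangle inequality,
\[
  \Big| (\om^n)^*\big(\theta^n(1)\big) - \om^*\big(\theta(1)\big) \Big|
  \;\le\;
  \Big| (\om^n)^*\big(\theta^n(1)\big) - \om^*\big(\theta^n(1)\big) \Big|
  \;+\;
  \Big| \om^*\big(\theta^n(1)\big) - \om^*\big(\theta(1)\big) \Big|,
\]
and I would bound the two terms on the right separately: the first captures the perturbation of the path at a fixed terminal time, the second the perturbation of the terminal time for the fixed running-maximum functional $\om^*$.

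For the second term, I would first observe that convergence in the L\'evy metric forces $\theta^n(1)\to\theta(1)=:t_1$; the extension convention $\theta_s:=\theta_1$ for $s\in[1,1+\eps]$ used in \eqref{eq:levy_metric} is precisely what pins down the value at the right endpoint, so that $d(\theta^n,\theta)<\eps$ gives $|\theta^n(1)-\theta(1)|\le\eps$. In particular $K:=\sup_n\theta^n(1)<\infty$ and $t_1\le K$. Since $\om$ is continuous, the map $t\mapsto\om^*(t)=\sup_{0\le s\le t}\om(s)$ is non-decreasing and (uniformly) continuous on $[0,K]$, whence $\om^*\big(\theta^n(1)\big)\to\om^*(t_1)=\om^*\big(\theta(1)\big)$ and the second term tends to $0$.

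For the first term, I would invoke the elementary inequality $\big|\sup_s f(s)-\sup_s g(s)\big|\le\sup_s|f(s)-g(s)|$ over the interval $s\in[0,\theta^n(1)]\subseteq[0,K]$, giving
\[
  \Big| (\om^n)^*\big(\theta^n(1)\big) - \om^*\big(\theta^n(1)\big) \Big|
  \;\le\;
  \sup_{0\le s\le K}\big|\om^n(s)-\om(s)\big|
  \;\longrightarrow\;0,
\]
the convergence being exactly the uniform convergence of $\om^n$ to $\om$ on the compact $[0,K]$, which is what $\rho(\om^n,\om)\to0$ provides. Combining the two estimates yields the claim. The only genuinely delicate point is the boundary behaviour of the L\'evy metric --- that $d(\theta^n,\theta)\to0$ entails $\theta^n(1)\to\theta(1)$, and not merely convergence at the continuity points of $\theta$ inside $[0,1)$; without this, an unbounded path $\om$ would make the statement fail --- while the remaining steps are routine continuity estimates.
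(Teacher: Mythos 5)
Your proof is correct and follows essentially the same route as the paper's: the same triangle-inequality decomposition into a path-perturbation term at fixed terminal time and a terminal-time-perturbation term for the fixed maximum functional, with the same supporting facts (uniform convergence of $\om^n$ on compacts, $\theta^n(1)\to\theta(1)$ from the extended Lévy metric, continuity of $\om^*$). Your treatment is slightly more explicit than the paper's on why $\theta^n(1)\to\theta(1)$ and on extracting the compact $[0,K]$, but the argument is the same.
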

	\begin{proof}
	Let $\big(\omega^n(\cdot),\theta^n(\cdot)\big)\in\Omb$, $n\in\N$, converging in the product topology to $\big(\tilde \omega(\cdot),\tilde \theta(\cdot)\big)\in\Omb$. 
	Recall that $C(\R_+, \R)$ is equipped with the metric $\rho$ defined in \eqref{eq:rho}, which induces the topology of uniform convergence on compact subsets. Hence, 
		\bqn
			\lim_{n\to\infty}\sup_{0\le s\le m}
			\big|\omega^n(s)-\tilde \omega(s)\big|=0,\quad \textrm{for all $m\ge 0$}.\label{eq:metric:whitt}
		\eqn
	Further, convergence in the L\'evy metric is equivalent to point-wise convergence at each point of continuity. Due to the right-continuity of elements in $\V^+_r([0,1], \R_+)$ and the extended definition of the L\'evy metric (cf. \eqref{eq:levy_metric}), it follows that $\theta^n(1)$ converges to $\tilde \theta(1)$. 	 
	Note that
	\bqq
		\Big| \sup_{0\le s\le \theta^n(1)} \omega^n(s)-\sup_{0\le s\le \tilde \theta(1)} \tilde \omega(s)\Big|
		&=&
		\;\;\Big| \sup_{0\le s\le \theta^n(1)} \omega^n(s)-\sup_{0\le s\le \theta^n(1)} \tilde \omega(s)\Big|\\
		&&+~
		\Big| \sup_{0\le s\le \theta^n(1)} \tilde \omega(s)-\sup_{0\le s\le \tilde \theta(1)} \tilde \omega(s)\Big|.
	\eqq
	The first term is dominated by $\sup_{0\le s\le \theta^n(1)} \left|\omega^n(s)-\tilde \omega(s)\right|$ which tends to zero as $n$ tends to infinity due to \eqref{eq:metric:whitt}. 
	Since $\tilde\omega(\cdot)$ is a continuous path, also the second term tends to zero. Hence, the mapping in \eqref{eq:continuity_max} is continuous and we conclude. 
	\qed
	\end{proof}

	{ 	

	\begin{Lemma} \label{lemm:conv_static}
		Let $\Omt^d := D([0,1], \R)$ be the space of all c\`adl\`ag paths on $[0,1]$ with canonical process $X$, 
		and $\Mc^d$ be the space of all martingale measures on $\Omt^d$. We define 
		\bq 
			\Mc^d(\mu)~:=~\left\{ \Pt \in \Mc^d~: X_t \sim^{\Pt} \mu_t,~\forall t\in[0,1] \right\}.
		\eq
		Further, let $\zeta: [0,1) \to (-\infty, m)$ be a non-decreasing c\`agl\`ad path on $[0,1)$ 
		and $\pi_n :0 = t^n_0 < \cdots < t^n_n = 1$ be a sequence of discrete time grids such that $|\pi_n| \to 0$ as $n \to \infty$. 
		Let $\zeta^c$ the continuous part of $\zeta$ and let $\lambda^{\zeta,m}$ defined in \eqref{eq:lambda_m}. 
		Then,
			\begin{itemize}
				\item[i)]{if $\mu_t$ is atomless, $t\in[0,1]$, we have $\Mc^d(\mu)\mathrm{-q.s.}$,
		\be \label{eq:static_conv_org}
			\sum_{k=1}^{n}
			\left(
				\frac{(X_{t^n_k} - \zeta_{t^n_k})^+}{m - \zeta_{t^n_k}}
				-
				\frac{(X_{t^n_k} - \zeta_{t^n_{k+1}})^+}{m - \zeta_{t^n_{k+1}}}\1_{\{k<n\}}
			\right)			
			\longrightarrow
			\int_0^1\lambda^{\zeta,m}\left(X_t,dt\right).\nn
		\ee
		}
		\item[ii)]{if $\zeta_s=\sum_{k=0}^\infty \zeta_k\1_{(t_k,t_{k+1}]}(s)$, then the convergence in i) holds path-wise for all $\xb \in \Omt^d$. The integral with respect to $d\zeta^c$ is then identically zero.}
		\end{itemize}
	\end{Lemma}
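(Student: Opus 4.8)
The plan is to reduce the claim to a deterministic Riemann--Stieltjes convergence through a summation-by-parts identity, and then to use atomlessness of the marginals only to control one exceptional set.

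\textbf{Step 1 (algebraic reduction).} Fix $m>0$ and set $g(x,z):=\frac{(x-z)^+}{m-z}$ for $z<m$; then $g$ is jointly continuous, $z\mapsto g(x,z)$ is non-decreasing and locally Lipschitz on $(-\infty,m)$, and it is differentiable off $\{z=x\}$ with $\partial_z g(x,z)=-\frac{m-x}{(m-z)^2}\1_{\{z<x\}}$. Let $S_n(\xb)$ denote the sum on the left of the asserted convergence, for a c\`adl\`ag path $\xb$ on $[0,1]$. A telescoping (Abel) rearrangement of $S_n(\xb)$ gives, with $\zeta_1:=\zeta_{1-}$,
\bq
	S_n(\xb) ~=~ g(\xb_1,\zeta_1)-\sum_{k=2}^{n}\Big(g(\xb_{t^n_{k-1}},\zeta_{t^n_k})-g(\xb_{t^n_{k-1}},\zeta_{t^n_{k-1}})\Big).
\eq
The constant $g(\xb_1,\zeta_1)$ is exactly the terminal ($t=1$) term in $\int_0^1\lambda^{\zeta,m}(\xb_t,dt)$ (cf. \eqref{eq:lambda_m}), so it remains to pass to the limit in the sum.

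\textbf{Step 2 (Riemann--Stieltjes limit).} By absolute continuity of $z\mapsto g(\xb_{t^n_{k-1}},z)$ one has $g(\xb_{t^n_{k-1}},\zeta_{t^n_k})-g(\xb_{t^n_{k-1}},\zeta_{t^n_{k-1}})=\int_{(\zeta_{t^n_{k-1}},\zeta_{t^n_k}]}\partial_z g(\xb_{t^n_{k-1}},u)\,du$, and these $u$-intervals are disjoint since $\zeta$ is non-decreasing. A change of variables $u\leftrightarrow\zeta_t$ and bounded convergence (the integrand is bounded because $\zeta_1<m$ and $\xb$ is bounded on $[0,1]$) show that the sum converges, for each fixed $\xb$, to $\int_0^1\partial_z g(\xb_t,\zeta_t)\,d\zeta^c_t+\sum_{t\in D_m}\big(g(\xb_t,\zeta_{t+})-g(\xb_t,\zeta_t)\big)$, where $\zeta^c$ is the continuous part of $\zeta$ and $\Delta\zeta_t=\zeta_{t+}-\zeta_t$; here one uses that the jump times of $\xb$ are $d\zeta^c$-null, that each fixed jump time of $\zeta$ is eventually isolated in one partition cell, and that the jump series is absolutely summable since $\sum_{t\in D_m}\Delta\zeta_t\le\zeta_1-\zeta_0<\infty$ and $g(x,\cdot)$ is Lipschitz on $[\zeta_0,\zeta_1]$. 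As $-\partial_z g(\xb_t,\zeta_t)=\frac{m-\xb_t}{(m-\zeta_t)^2}\1_{\{\xb_t>\zeta_t\}}$, one obtains that $S_n(\xb)$ converges to $g(\xb_1,\zeta_1)+\int_0^1\frac{m-\xb_t}{(m-\zeta_t)^2}\1_{\{\xb_t>\zeta_t\}}\,d\zeta^c_t+\sum_{t\in D_m}\big(g(\xb_t,\zeta_t)-g(\xb_t,\zeta_{t+})\big)$; comparing with \eqref{eq:lambda_m} (the first integral is $\int\lambda^{\zeta,m}_c(\xb_t,t)\,d\zeta_t$, the sum is the $D_m$-atomic part of $\int\lambda^{\zeta,m}_d(\xb_t,t)\,d\zeta_t$), this is $\int_0^1\lambda^{\zeta,m}(\xb_t,dt)$ \emph{up to} the single discrepancy that the limit carries $\1_{\{\xb_t>\zeta_t\}}$ where $\lambda^{\zeta,m}_c$ carries $\1_{\{\xb_t\ge\zeta_t\}}$.

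\textbf{Step 3 (the two cases).} For (i): for each $\P\in\Mc^d(\mu)$, Tonelli's theorem and atomlessness of $\mu_t$ give $\E^\P\big[\int_0^1\1_{\{X_t=\zeta_t\}}\,d\zeta^c_t\big]=\int_0^1\mu_t(\{\zeta_t\})\,d\zeta^c_t=0$, so $\P$-a.s. $\{t:X_t=\zeta_t\}$ is $d\zeta^c$-null and the discrepancy from Step 2 disappears; hence the convergence holds $\Mc^d(\mu)$-q.s. For (ii): $\zeta$ is purely discontinuous, so $\zeta^c\equiv0$; then the $d\zeta^c$-integral and its exceptional set drop out, $\int_0^1\lambda^{\zeta,m}(\xb_t,dt)$ reduces to $g(\xb_1,\zeta_1)+\sum_{t\in D_m}\big(g(\xb_t,\zeta_t)-g(\xb_t,\zeta_{t+})\big)$, and the argument of Steps 1--2 is deterministic, so the convergence holds path-wise for every $\xb\in\Omt^d$.

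\textbf{Main obstacle.} The delicate step is Step 2: carrying out the Riemann--Stieltjes passage with a bound on the summands uniform in $n$, and keeping exact track of on which side of each jump (of $\zeta$ and of $\xb$) the summands are evaluated in the limit --- the latter being harmless in the relevant situation, where (the marginals being weakly continuous) the process has no fixed time of discontinuity. This jump bookkeeping, together with the behaviour on the level set $\{X_t=\zeta_t\}$, is precisely what accounts for the asymmetry between (i) (only ``q.s.'', with atomlessness absorbing the level-set issue) and (ii) (fully path-wise once $d\zeta^c\equiv0$).
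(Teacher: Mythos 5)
Your proposal is correct and follows essentially the same strategy as the paper: after peeling off the terminal term, both rewrite the discrete sum as a $\zeta$-Stieltjes integral, prove pointwise convergence of the integrand off the level set $\{\xb_t=\zeta_t\}$ (and off the jump set $D_\zeta$ for the $d\zeta^c$-part), and conclude by dominated convergence, with atomlessness of $\mu_t$ in case (i) and $\zeta^c\equiv 0$ in case (ii) absorbing the exceptional set. The paper packages the integrand as explicit difference quotients $f_n(t;\zeta)$ rather than through the FTC on $\partial_z g$ and a change of variables, but this is a bookkeeping variation rather than a different route.
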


	\begin{proof}
		It follows from the definition of $\lambda^{\zeta,m}$, that in order to prove i), it is sufficient to show that, $\Mc^d(\mu)\mathrm{-q.s.}$, 
		{\setlength{\arraycolsep}{-1.55cm}
		\bqqn	\label{eq:static_conv}
			\phantom{hejap}\sum_{k=1}^{n-1}
			\left(
				\frac{(X_{t^n_k} - \zeta_{t^n_{k+1}})^+}{m - \zeta_{t^n_{k+1}}}
				-
				\frac{(X_{t^n_k} - \zeta_{t^n_k})^+}{m - \zeta_{t^n_k}}
			\right)\\
			&&\longrightarrow
			\int_0^1 \frac{X_t - m}{(m-\zeta_t)^2} \1_{X_t \ge \zeta_t} ~d\zeta^c_t
			\;+\;
			\sum_{t}
			\left(
				\frac{(X_t - \zeta_{t+})^+}{m - \zeta_{t+}}
				-
				\frac{(X_t- \zeta_{t})^+}{m - \zeta_{t}}
			\right).\nn 
		\eqqn}
		Observe that for each path $\xb \in D([0,1], \R)$, the discrete sum in \eqref{eq:static_conv} might be written as $\int_0^1f_n(t;\zeta)d\zeta_t$, where  
			\bq 
				f_n(t;\zeta)
				~=~
				\sum_{k=1}^{n-1}
			\frac{\1_{t\in (t^n_{k},t^n_{k+1}]}}{\zeta_{t^n_{k+1}}-\zeta_{t^n_{k}}}
			\left(
				\frac{(\xb_{t^n_k} - \zeta_{t^n_{k+1}})^+}{m - \zeta_{t^n_{k+1}}}
				-
				\frac{(\xb_{t^n_k} - \zeta_{t^n_k})^+}{m - \zeta_{t^n_k}}
			\right).
			\eq	
		Denote by $D_{\zeta} \subset (0,1)$ the subset of all discontinuous points of $\zeta$. First, suppose that assumption i) holds. 
		Then, for $t\not\in D_\zeta^c\cap\{t:\xb_t=\zeta_t\}$, the $f_n(\cdot;\zeta)$ converges point-wise to $f(\cdot;\zeta)$, with
	\begin{equation}
	f(t;\zeta)~=~
	\left\{\begin{array}{lll}
		\frac{\xb_t - m}{(m-\zeta_t)^2} \1_{\xb_t \ge \zeta_t},  
	&& t \in D_\zeta^c \\
		\frac{1}{\zeta_{t^+}-\zeta_t}\left(\frac{(\xb_t - \zeta_{t+})^+}{m - \zeta_{t+}}
				~-~
				\frac{(\xb_t- \zeta_{t})^+}{m - \zeta_{t}}\right),
	&& t \in D_\zeta.
	\end{array}\right.\end{equation}
	On the other hand, by use of Fubini's theorem and assumption i), we obtain that for $\P\in\Mc^d(\mu)$,  
		\bq
			\E^\P\left[\int_0^1\1_{\{\xb_t=\zeta_t\}}d\zeta^c_t\right]
			~=~\int_0^1\P\left[\xb_t=\zeta_t\right]d\zeta^c_t
			~=~\int_0^1\mu_t\left(\{\zeta_t\}\right)d\zeta^c_t
			~=~ 0.
		\eq
	That is to say, $\int_0^1\1_{\{X_t=\xi_t\}}d\zeta^c_t=0$, $\Mc^d(\mu)\mathrm{-q.s.}$ 
	Since, for all $\varepsilon >0$, $\zeta \to \frac{(x-\zeta)^+}{m-\zeta}$ is Lipschitz on $(-\infty,m-\varepsilon]$, there is $K>0$, such that $f_n(t)\le K$, $t\in[0,1]$, $n\ge 0$. Hence, by use of dominated convergence we obtain $\int_0^1f_n(t;\zeta)d\zeta_t\to\int_0^1 f(t;\zeta)d\zeta_t$, $\Mc^d(\mu)\mathrm{-q.s.}$, which implies \eqref{eq:static_conv}.
	
	Next, suppose assumption ii) holds. Then, for $t\in[0,1]$, the function $f_n(t;\zeta)$ converges point-wise to $f^0(t;\zeta)$, where
	\begin{equation}
	f^0(t;\zeta)~=~
	\left\{\begin{array}{lll}
		0,  
	&& t \in D_\zeta^c \\
		\frac{1}{\zeta_{t^+}-\zeta_t}\left(\frac{(\xb_t - \zeta_{t+})^+}{m - \zeta_{t+}}
				~-~
				\frac{(\xb_t- \zeta_{t})^+}{m - \zeta_{t}}\right),
	&& t \in D_\zeta.
	\end{array}\right.\end{equation}
	By use of the same arguments as in the case i), we may then apply the dominated convergence theorem pathwise and we easily conclude.
	\qed
	\end{proof}		
}

\subsection{Proof of Theorem \ref{theo:main}}

	We first argue that the optimal SEP given finitely many marginals defined by \eqref{eq:Pn}, may be reformulated similarly to that in \eqref{eq:alpha} and \eqref{eq:P_embedding}.	 
	Concretely, for a given discrete time grid $\pi_n: 0 = t_0^n < \cdots < t_n^n=1$, we call a $(\mu, \pi_n)$-embedding a term
	\be \label{eq:alpha_n}
		\alpha 
		&=& 
		\big(\Om^{\alpha}, \Fc^{\alpha}, \P^{\alpha}, \F^{\alpha} = (\Fc_t^{\alpha})_{t \ge 0}, (W^{\alpha}_t)_{t \ge 0}, (T^{\alpha}_k)_{k = 1, \cdots, n} \big),
	\ee
	such that in the filtered space $\big(\Om^{\alpha}, \Fc^{\alpha}, \P^{\alpha}, \F^{\alpha} \big)$,
	$W^{\alpha}_{\cdot}$ is a Brownian motion, 
	$T^{\alpha}_1 \le \cdots \le T^{\alpha}_n $ are all stopping times,
	the stopped process $(W^{\alpha}_{T^{\alpha}_n \wedge \cdot})$ is uniformly integrable,
	and $W^{\alpha}_{T^{\alpha}_k} \sim^{\P^{\alpha}} \mu_{t_k^n}$ for each $k = 1, \cdots, n$.
	Let $\Ac_n(\mu)$ denote the collection of all $(\mu, \pi_n)$-embeddings $\alpha$.
	Then it is clear that every term in $\Ac_n(\mu)$ induces on the canonical space $\Omb$ a probability measure in $\Pc_n(\mu)$,
	and every probability measure $\P \in \Pc_n(\mu)$ together with the space $(\Omb, \Fcb, \Fbb)$ forms a $(\mu, \pi_n)$-term in $ \Ac_n(\mu)$.
	And hence, for $\Phi_n: \Om \x (\R_+)^n \to \R$ a given reward function, we have that
	\be	\label{eq:Pn_embedding}
		P_n(\mu)
		&=&
		\sup_{\alpha \in \Ac_n(\mu)}
		\E^{\P^{\alpha}} \Big[ \Phi_n \Big( W^{\alpha}_{\cdot}, T^{\alpha}_1, \cdots, T^{\alpha}_n \Big) \Big].
	\ee

	\vspace{3mm}
	
	Before proving Theorem \ref{theo:main}, we present a Lemma. Its proof is partly adapted from the proof of Theorem 11 in Monroe \cite{Monroe} and that of Theorem 3.10 in Jakubowski \cite{Jakubowski}.	
	
	\begin{Lemma}\label{lemm:tight}
		Let $\alpha_n\in\Ac_n(\mu)$, $n\in\N$, be a sequence of terms of the form \eqref{eq:alpha_n}.
		Let $\P_n$ be the probability measure on $\Omb$ induced by $ \big(W^{\alpha_n}_{\cdot}, T^{\alpha_n}_{\cdot} \big)$ in the probability space $ \big(\Om^{\alpha_n}, \Fc^{\alpha_n}, \P^{\alpha_n}\big)$.
		Then, the sequence $\left\{\P_n\right\}_{n \ge 1}$ is tight, and any limiting point $\P$ is in $\Pc(\mu)$.	 
	\end{Lemma}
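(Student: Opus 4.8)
I would first establish tightness of $\{\P_n\}_{n\ge1}$ by treating the two coordinates of $\Omb=\Om\times\V^+_r$ separately, and then, for an arbitrary subsequential weak limit $\P$, verify the three requirements for $\P\in\Pc(\mu)$: that $B$ is an $\Fbb$-Brownian motion, that $B_{\cdot\wedge T_1}$ is uniformly integrable, and that $B_{T_t}\sim^\P\mu_t$ for every $t\in[0,1]$.

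For tightness, note that the $\Om$-marginal of each $\P_n$ is the law of the Brownian motion $W^{\alpha_n}$, hence equals the Wiener measure $\P_0$, so the family of $\Om$-marginals is trivially tight. Since a set of the form $\{\theta\in\V^+_r:\theta_1\le M\}$ is compact for the L\'evy metric (Helly's selection theorem), it remains to bound $\P_n[\theta_1>M]=\P^{\alpha_n}[T^{\alpha_n}_n>M]$ uniformly in $n$; here only $\int_\R|x|\,\mu_1(dx)<\infty$ is used. With $\tau:=T^{\alpha_n}_n$ (so $W^{\alpha_n}_\tau\sim\mu_1$, $W^{\alpha_n}_{\tau\wedge\cdot}$ is a uniformly integrable martingale, and hence $\tau<\infty$ a.s.\ as such a martingale converges) and $\rho_R:=\inf\{s:|W^{\alpha_n}_s|\ge R\}$, optional stopping applied to the martingale $((W^{\alpha_n}_s)^2-s)_{s\ge0}$ and the stopping time $\tau\wedge\rho_R$ gives $\E[(W^{\alpha_n}_{\tau\wedge\rho_R\wedge s})^2]=\E[\tau\wedge\rho_R\wedge s]\le R^2$ for all $s$, hence $\E[\tau\wedge\rho_R]\le R^2$; Doob's maximal inequality gives $\P[\rho_R\le\tau]=\P[\sup_s|W^{\alpha_n}_{\tau\wedge s}|\ge R]\le R^{-1}\int_\R|x|\,\mu_1(dx)$; therefore $\P^{\alpha_n}[\tau>M]\le R^2/M+R^{-1}\int_\R|x|\,\mu_1(dx)$, which is made small by taking $R$, then $M$, large, uniformly in $n$. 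The same estimate passes to $\P$ and yields $T_1<\infty$ $\P$-a.s.

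Now fix a limit point $\P$, $\P_{n_k}\Rightarrow\P$. Its $\Om$-marginal is $\P_0$, so $B$ is a Brownian motion under $\P$; to upgrade this to an $\Fbb$-Brownian motion I would verify, for $f\in C_b^2$, that $f(B_t)-f(B_s)-\tfrac12\int_s^tf''(B_u)\,du$ is a $\P$-martingale for $\Fbb$, by testing against products of bounded continuous functions of the coordinates $B_{s_i}$ ($s_i\le s$) and $T_{r_j}$, where each $r_j$ is chosen in the Lebesgue-conull set of times at which $\theta\mapsto\theta_{r_j}$ is $\P$-a.s.\ continuous (a c\`adl\`ag path has at most countably many jumps); such test functionals generate $\Fcb_s$, each $\P_n$ belongs to $\Pc$ and hence satisfies this martingale relation, and the portmanteau theorem transfers it to the limit. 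For the marginal constraint, fix $t$ that is a continuity point of $s\mapsto\mu_s$ and at which $\theta\mapsto\theta_t$ is $\P$-a.s.\ continuous (all but countably many $t$, intersected with a Lebesgue-conull set); then $(\om,\theta)\mapsto\om_{\theta_t}$ is $\P$-a.s.\ continuous, so $B_{T_t}$ under $\P_{n_k}$ converges weakly to $B_{T_t}$ under $\P$, while $B_{T_t}\sim^{\P_{n_k}}\mu_{u_{n_k}}$ with $u_{n_k}$ the largest point of $\pi_{n_k}$ not exceeding $t$; since $u_{n_k}\to t$ we have $\mu_{u_{n_k}}\to\mu_t$, so $B_{T_t}\sim^\P\mu_t$ for all such $t$, and this extends to every $t\in[0,1]$ by right-continuity of $t\mapsto\mu_t$ and of the $\P$-law of $t\mapsto B_{T_t}$ (the latter since $T$ is c\`adl\`ag).

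The uniform integrability of $B_{\cdot\wedge T_1}$ under $\P$ is the main obstacle, because only a first moment of $\mu_1$ is available, which rules out the usual $L^2$ argument; this is the point at which I would adapt the reasoning of Monroe \cite{Monroe} and Jakubowski \cite{Jakubowski}. The key observation is that $(B_{T_1\wedge t})_{t\ge0}$ is a true martingale (a stopped Brownian motion) converging $\P$-a.s.\ (as $T_1<\infty$) to $B_{T_1}\sim\mu_1$, so $\E^\P[B_{T_1\wedge t}]=0=\E^\P[B_{T_1}]$, whence $\E^\P|B_{T_1\wedge t}|=2\,\E^\P[(B_{T_1\wedge t})^+]$ and $\E^\P|B_{T_1}|=2\int_\R x^+\mu_1(dx)$. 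Since $(\om,\theta)\mapsto(\om_{\theta_1\wedge t})^+$ is continuous and non-negative, the portmanteau theorem and the uniform integrability of $B_{\cdot\wedge T_1}$ under each $\P_{n_k}$ give $\E^\P[(B_{T_1\wedge t})^+]\le\liminf_k\E^{\P_{n_k}}[(B_{T_1\wedge t})^+]\le\liminf_k\E^{\P_{n_k}}[(B_{T_1})^+]=\int_\R x^+\mu_1(dx)$; as $t\mapsto\E^\P[(B_{T_1\wedge t})^+]$ is non-decreasing and has, by Fatou, limit at least $\E^\P[(B_{T_1})^+]=\int_\R x^+\mu_1(dx)$, it must increase to $\int_\R x^+\mu_1(dx)$, i.e.\ $\E^\P|B_{T_1\wedge t}|\to\E^\P|B_{T_1}|$. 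Combined with the $\P$-a.s.\ convergence this forces $L^1$-convergence, so $B_{\cdot\wedge T_1}$ is a uniformly integrable $\P$-martingale; together with the previous step, $\P\in\Pc(\mu)$. The two genuinely delicate ingredients are the discontinuity of the evaluation maps $\theta\mapsto\theta_r$ on $\V^+_r$ (circumvented by restricting to Lebesgue-a.e.\ times) and this final uniform-integrability passage.
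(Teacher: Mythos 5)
Your proof is correct in substance and reaches the same conclusion, but it takes a genuinely different route from the paper's at three of the four steps. For the tightness tail bound on $T_1$ you give a direct, self-contained estimate via optional stopping of $(W_s^2-s)$ together with Doob's maximal inequality, obtaining $\P[\tau>M]\le R^2/M + R^{-1}\int|x|\,\mu_1(dx)$; the paper instead cites Monroe's Proposition~7 for the bound $\P[T_1\ge\lambda]\le\lambda^{-1/3}(\mu_1(|x|)^2+1)$. Both work, and yours has the advantage of being elementary and not needing minimality. For the marginal constraint you argue directly with the cocountable set of times $t$ at which both $s\mapsto\mu_s$ and $\theta\mapsto\theta_t$ are ($\P$-a.s.) continuous, and then extend by right-continuity; the paper goes through Jakubowski's device of the random measure $m^{\alpha_n}([0,t])=T^{\alpha_n}_t/(1+T^{\alpha_n}_1)$ and the Skorokhod representation theorem. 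Your version is shorter and avoids the extra embedding, and implicitly contains the same observation (countably many atom times of $\E[\theta_\cdot]$, and the Hirsch–Roynette countability of discontinuity points of $t\mapsto\mu_t$). For uniform integrability you use a Scheff\'e-type argument: $\E^\P[(B_{T_1\wedge t})^+]\le\int x^+\mu_1(dx)$ for all $t$ by lower semicontinuity and the submartingale bound under each $\P_{n_k}$, combined with Fatou from below and a.s.\ convergence to force $L^1$-convergence. The paper uses a truncation argument with a continuous minorant $p\le |x|\1_{\{|x|\ge 2K_\eps\}}$ and the uniform second-tail control $\int(|x|-K_\eps)^+\mu_1(dx)\le\eps$. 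Both are valid; note that your UI argument genuinely requires the marginal $B_{T_1}\sim^\P\mu_1$ to have been established first (which your ordering respects), since the paper's approach needs only $B_{T_1}\sim^{\P_n}\mu_1$. The only step where you and the paper essentially coincide is the upgrade of $B$ to an $\bar\F$-Brownian motion via the martingale problem and continuous test functionals; one small imprecision is that your test coordinates should be $T_{r_j}\wedge s$ rather than $T_{r_j}$, since the latter are not $\bar\Fc_s$-measurable — this is exactly the truncation used in the paper's Lemma~A.1, and without it the test class does not sit inside $\bar\Fc_s$.
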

	

	\begin{proof} 
		\noindent \rmi We first claim that the sequence $\left\{\P_n\right\}_{n \ge 1}$ is tight.
	Indeed, the projection measure $\P_n |_{\Om}$ on $\Om$ is the Wiener measure for every $n \ge 1$,
	and hence the sequence $(\P_n|_{\Om})_{n \ge 1}$ is trivially tight.
	Next, since $T^{\alpha_n}_1$ are all minimal stopping times in the sense of Monroe \cite{Monroe},
	it follows from Proposition 7 in \cite{Monroe} that
		\b*
			\P_n \left(T_1 \ge \lambda\right)
			&=&
			\P^{\alpha_n} \left(T^{\alpha_n}_1 \ge \lambda\right)
			~\le~ 
			\lambda^{-1/3}\left(\E^{\P^{\alpha_n} }\big[|X^{\alpha_n}_1|\big]^2+1\right),
			\quad \forall \lambda>0.
		\e*
	Let  $A_\lambda$ be the set of functions in $\V^+_r([0,1], \R_+)$ which are bounded by $\lambda>0$ and $\pi$ the projection of $\Omb$ onto $\V^+_r([0,1], \R_+)$, it follows that 
		\bqn
			\P_n(\pi^{-1}(A_\lambda))
			~~=~~
			\P_n \left(T_1 \le \lambda\right)
			~~\ge~~ 
			\lambda^{-1/3}\left( \left( \mu_1(|x|) \right)^2+1\right).\label{eq:tight}		
		\eqn
	Since $A_\lambda$, $\lambda>0$, are compact and the r.h.s. of \eqref{eq:tight} can be made arbitrarily small by an appropriate choice of $\lambda$, the sequence of projection measures $(\P_n|_{\V^+_r})_{n \ge 1}$ is also tight. 
	In consequence, $\left\{\P_n\right\}_{n \ge 1}$ is tight. 

	\vspace{1mm}
	
	\noindent 	\rmii Let $\P$ be a limit point of $(\P_n)_{n \ge 1}$,
	by taking subsequences if necessary,
	we can assume that $\P_n \to \P$.
	We now prove that $B$ is a $\Fbb$-Brownian motion under the limit measure $\P$.

	Since the measures $\P_n$ are induced by $(W^{\alpha_n}, T^{\alpha_n})$ under $\P^{\alpha_n}$,
	we know that $B$ is a $\Fbb$-Brownian motion under each $\P_n$, $n \ge 1$.
	Let $t > s$, $0 < \eps < t-s$, and $\phi: \Omb \to \R$ be a bounded continuous function which is $\Fcb_{s+\eps}$-measurable,
	then for every $\varphi \in C_b^2(\R)$, we have
	\b*
		\E^{\P_n} 
		\left[ \phi(B_{\cdot}, T_{\cdot}) \left( \varphi(B_t) - \varphi(B_{s+\eps}) - \int_{s+\eps}^t \frac{1}{2} \varphi''(B_u) du \right) \right] = 0.
	\e*
	By taking the limit $n \to \infty$, it follows that
	\be \label{eq:BrownianMartPb}
		\E^{\P} 
		\left[ \phi(B_{\cdot}, T_{\cdot}) \left( \varphi(B_t) - \varphi(B_{s+\eps}) - \int_{s+\eps}^t \frac{1}{2} \varphi''(B_u) du \right) \right] = 0.
	\ee
	According to Lemma \ref{lemm:BorelTribu}, the equality \eqref{eq:BrownianMartPb} holds true also for every bounded random variable $\phi: \Omb \to \R$ that is $\Fcb_t$-measurable.
	Let $\eps \to 0$, it follows that for every $\phi: \Omb \to \R$ bounded and $\Fcb_t$-measurable,
	and every $\varphi \in C_b^2(\R)$ that
	\b*
		\E^{\P} 
		\left[ \phi(B_{\cdot}, T_{\cdot}) \left( \varphi(B_t) - \varphi(B_{s}) - \int_{s}^t \frac{1}{2} \varphi''(B_u) du \right) \right] = 0.
	\e*
	And hence $B$ is a $\Fbb$-Brownian motion under $\P$.
	
	\vspace{1mm}
	
	\noindent \rmiii Now, we show that the process $(B_{t \wedge T_1} )_{t \ge 0}$ is uniformly integrable under $\P$.
	For every $\eps > 0$, there is $K_{\eps} > 0$ such that
	\b*
		\int_{R} \big( |x| - K_{\eps} \big)^+ \mu_1(dx) 
		&\le&
		\eps.
	\e*
	Since $|x| \1_{\{|x| \ge 2K\}} \le 2 (|x| - K)^+$, it follows that
	\b*
		\E^{\P_n} \Big[ \big|B_{T_1 \wedge t} \big| \1_{\{|B_{T_1 \wedge t} | \ge K_{\eps}\}} \Big]
		&\le&
		2 \E^{\P_n} \Big[ \Big( \Big| B_{T_1} \Big| - K_{\eps} \Big)^+ \Big]
		~~\le~~
		2\eps,
		~~~ \forall t \ge 0.
	\e*
	Then, for every bounded continuous function $p: \R \to \R$ such that $p(x) \le |x| \1_{\{|x| \ge 2 K_{\eps}\}}$,
	it follows by the dominated convergence theorem that
	\b*
		\E^{\P} \Big[ p \big( B_{t \wedge T_1} \big) \Big]
		&=&
		\lim_{n \to \infty} \E^{\P_n} \Big[ p \big( B_{t \wedge T_1} \big) \Big]
		~~\le~~
		2 \eps, ~~~\forall t \ge 0,
	\e*
	which implies that $(B_{t \wedge T_1})_{t \ge 0}$ is uniformly integrable under $\P$.

	\vspace{1mm}

	\noindent \rmiv Next, we prove that $B_{T_t} \sim^{\P} \mu_t$, $t \in [0,1]$. 
	We shall adapt the idea of proof of Theorem 3.10 in Jakubowski \cite{Jakubowski}.
	For every $\alpha_n$ the solution of the optimal SEP \eqref{eq:Pn},
	denote by $m^{\alpha_n}$ the random measure on $([0,1], \Bc([0,1]))$ defined by
	\b*
		m^{\alpha_n}([0,t], \om)
		&:=&
		\frac{T^{\alpha_n}_t(\om)}{1 + T^{\alpha_n}_1(\om)},
		~~\forall t \in [0,1].
	\e*
	Since $m_n$ takes value in a compact space (the space of all positive measures on $[0,1]$ with mass less than $1$), we know that the sequence of distribution of $m_n$ under $\P^{\alpha_n}$ is tight.
	By taking subsequences and the Skorokhod representation theorem, we can assume that there is some probability space $(\Om^*, \Fc^*, \P^*)$ in which
	\b*
		\big( W^{\alpha_n}, T^{\alpha_n}_1, m^{\alpha_n} \big)
		&\rightarrow&
		\big(W^*, T^*, m^* \big),
		~~\P^*-a.s.
	\e*
	
	Further, the map $t \mapsto \E^{\P^*} \big[ m^*([0,t]) \big]$ from $[0,1]$ to $\R$ is non-decreasing, and hence admits at most countable discontinuous points.
	It follows that there is some countable set $\Q_1 \subset [0,1)$ such that $\E^{\P^*} \big[ m^*(\{t\}) \big] = 0$, for every $t \in [0,1] \setminus \Q_1$.
	Thus, for every $t \in [0,1] \setminus \Q_1$, we have  $\P^*$-a.s.,
	$m^{\alpha_n}([0,t]) \to m^*([0,t])$, and hence $T^{\alpha_n}_t \to T^*_t$.
	In particular, we have
	\b*
		W^{\alpha_n}_{T^{\alpha_n}_t} 
		&\to&
		W^*_{T^*_t},
		~~\P^*-a.s.
		~\forall t \in [0,1] \setminus \Q_1.
	\e*	
	Besides, by Hirsch and Roynette \cite{HR} Lemma 4.1., there exists a countable set $\Q_2 \subset [0,1]$ such that $t \mapsto \mu_t$ is continuous at any $s \in [0,1] \setminus \Q_2$.
	Then for every $t \in [0,1] \setminus (\Q_1 \cup \Q_2)$, we have
	\be \label{eq:marginal_BTt}
		\P \circ (B_{T_t})^{-1} = \P^* \circ (W^*_{T^*_t})^{-1} = \mu_t.
	\ee
	By the right continuity of $t \mapsto \mu_t$, it follows that \eqref{eq:marginal_BTt} holds true for every $t \in [0,1]$.
	
	\vspace{1mm}
	
	In summary, we have proven that in the filtered space $(\Omb, \Fcb_{\infty}, \P, \Fbb)$,
	$B$ is a Brownian motion, $T_1$ is a minimal stopping time 
	and $B_{T_t} \sim \mu_t$ for every $t \in [0,1]$.
	We easily conclude. 
	\qed
	\end{proof}	
	
	\vspace{3mm}	
	
	\noindent {\bf Proof of Theorem \ref{theo:main}}. 
	By taking expectation over each side of the inequality defining $\Dc(\mu)$ in \eqref{eq:Dc_mu}, for all $\P \in \Pc(\mu)$, we easily obtain the weak duality $P(\mu) \le D(\mu)$.
	Let $\Phi_n$, $n\in\mathbb{N}$, the sequence of functions approximating $\Phi$ as given in Lemma \ref{lem:approx_fcn}. Further, let $P_n(\mu)$ and $D_n(\mu)$ the primal and dual $n$-marginal problems defined w.r.t. $\Phi_n$ in \eqref{eq:Pn} and \eqref{eq:Dn}. Since $\Phi_n \ge \Phi$, it follows that
	\bqn	\label{eq:Dn_D_proof}
		D_n(\mu) ~\ge~ D(\mu).
	\eqn

	For each $n\in\N$, let $\alpha_n\in\Ac_n(\mu)$ be the solution of the optimal SEP $P_n(\mu)$ in \eqref{eq:Pn_embedding} defined with respect to $\Phi_n$.
	Let $\P_n$ be the probability measure on $\Omb$ induced by $ \big(W^{\alpha_n}_{\cdot}, T^{\alpha_n}_{\cdot} \big)$ in the probability space $ \big(\Om^{\alpha_n}, \Fc^{\alpha_n}, \P^{\alpha_n}\big)$.
	Then, according to Lemma \ref{lemm:tight}, the sequence $\left\{\P_n\right\}_{n \ge 1}$ is tight, and $\P\in\Pc(\mu)$, where $\P$ is a limiting point of $\left\{\P_n\right\}_{n \ge 1}$. 
	
	Note that by taking sub-sequences if necessary, we can assume that $\P_n \to \P$. 	
	By use of the monotone convergence theorem and the optimality of the $\P_n$ for the problem \eqref{eq:Pn}, $n\in\N$, it follows that
	\bqq \label{eq:Pncvg}
		P (\mu)
		&\ge&
		\E^{\P} \Big[ \Phi \big( B_{\cdot}, T_{\cdot} \big) \Big]
		~=~
		\lim_{n \to \infty} \E^{\P} \Big[ \Phi_n \big( B_{\cdot}, T_{\cdot} \big) \Big] \nonumber \\
		&\ge&
		\lim_{n \to \infty} \left( \lim_{k \to \infty} \E^{\P_k} \Big[ \Phi_n \big( B_{\cdot}, T_{\cdot} \big) \Big] \right)
		~\ge~
		\lim_{n \to \infty} \left( \lim_{k \to \infty} \E^{\P_k} \Big[ \Phi_k \big( B_{\cdot}, T_{\cdot} \big) \Big] \right) \nonumber \\
		&=&
		\lim_{n\to\infty} P_n(\mu).
	\eqq
		In consequence, since $P_n(\mu) \ge P(\mu)$ for all $n \ge 1$, we have that
		\bqn	\label{eq:limitPn_proof}
			\lim_{n\to\infty}P_n(\mu)~=~ P(\mu). 
		\eqn	
		Since the $\Phi_n$ satisfy \eqref{eq:Phi_n_prop}, we may apply the duality result for the optimal SEP with finitely many marginal constraints (see Proposition \ref{prop:duality_GTT}). Hence, it follows from \eqref{eq:limitPn_proof} combined with \eqref{eq:Dn_D_proof} that 
		$P(\mu) ~\ge~ D(\mu).$
	Combined with the weak duality, this yields $P(\mu) = D(\mu)$. 	
	As a by-product,  we also obtain that $\P$ is an optimal embedding for the optimal SEP \eqref{eq:main_problem}. This concludes the proof. 
	\qed

	\vspace{3mm}

	\noindent {\bf Proof of Proposition \ref{prop:limit_mot_n}.}
		Given the form of $\xi: \Omt \to \R$, Proposition \ref{prop:MT_duality} applies. Hence, $\tilde P_n(\mu)=P_n(\mu)$.
		Next, note that $P_n(\mu)$ is of the form \eqref{eq:Pn}, for all $n\in\N$. Let $\P_n$ be the optimal measure for $P_n(\mu)$. 
		Then, according to Lemma \ref{lemm:tight}, passing to a subsequence if necessary, $\P_n \to \P$, with $\P\in\Pc(\mu)$. It follows that
	\b* 
		P (\mu)
		~\ge~
		\E^{\P} \Big[ \Phi \big( B_{\cdot}, T_{\cdot} \big) \Big]
		~\ge~
		\lim_{n \to \infty} \E^{\P_n} \Big[ \Phi \big( B_{\cdot}, T_{\cdot} \big) \Big]
		~=~
		\lim_{n\to\infty} P_n(\mu).
	\e*
		Since $P_n(\mu)\ge P(\mu)$, for $n\ge 1$, we easily conclude. 
	\qed

\subsection{Proof of Theorem \ref{them:main2}}

	By Lemma \ref{lem:cont}, the mapping
	\bq
		\big(\omega(\cdot),\theta(\cdot)\big)\longmapsto
		\sup_{0\le s\le \theta(1)} \omega(s),
	\eq
	is continuous with respect to the product topology on $\Omb$. 
	Hence, Theorem \ref{theo:main} applies and $\lim_{n\to\infty} P_n(\mu) = P(\mu)=D(\mu)$.
	
	Next, for the optimal SEP with finitely many marginals, we have (see \eqref{eq:DnCn})
	\b*
		P_n(\mu) ~~=~~ D_n(\mu) 
		&\le& \phi(0) + \int_0^{\infty} C_n(m) d \phi(m), 
	\e*
	where the equality holds under Assumption \ref{ass:A} \rmii (see \eqref{eq:MMM_n}).
	
	Finally, it is enough to use Lemma \ref{lemm:C_limit} together with the monotone convergence theorem to deduce that
	\bqq
		~~~~~~~~~~~~~~~~~~~~~~~~	
		\lim_{n\to\infty}\int_0^\infty C_n(m)\;d\phi(m)
		&=&
		\int_0^\infty C(m)\;d\phi(m).	
		~~~~~~~~~~~~~~~~~~~~~~\qed	
	\eqq

\subsection{Proof of Theorem \ref{theo:optimser:dual}}\label{sec:proof_theo:optimizer:dual}

	Due to assumption \eqref{eq:assump_zeta}, the pair $(\hat\lambda,\widehat H)$ is well-defined and $(\hat\lambda,\widehat H)\in\Lambda(\mu)\times\Hc$.
		According to \eqref{eq:quest:price} and \eqref{eq:lambda_m}, for $m>0$ and $\zeta\in\V^+_l$ such that $\zeta<m$, the cost of $\lambda^{\zeta,m}$ is given by
			{\setlength{\arraycolsep}{-2.775cm}
			\bqq \label{eq:mu:chain}
			\mu\big(\lambda^{\zeta,m}\big)
			\;=\;
				\frac{c(1,\zeta_1)}{m - \zeta_1}
				-\sum_{t\in D}\left[\frac{c(t,\zeta_{t^+})}{m-\zeta_{t^+}}
			-\frac{c(t,\zeta_t)}{m-\zeta_t}\right]
			-\int_0^1
				\frac{\partial}{\partial \zeta}
				\left.\left\{\frac{c(t,\zeta)}{m-\zeta}\right\}\right|_{\zeta=\zeta^{c}_t}d\zeta^{c}_t\nn\\
			&&=\;	\frac{c(0, \zeta_0)}{m - \zeta_0}
			~+~
				\int_0^1 \frac{\partial_t c(s, \zeta_s)}{m - \zeta_s} ds,~~~~~~~~~~~
				~~~~~~~~~~~~~~~~~~~~~~~~~~~~~~~~~~~~~~~~~~
			\eqq }
	where it was used that $\frac{\partial}{\partial \zeta}\frac{(x-\zeta)^+}{m-\zeta}
	=\1_{\{x\ge \zeta\}}\frac{x-m}{(m-\zeta)^2}$. 
	Since $\hat\zeta^m_\cdot$ minimizes \eqref{eq:maxzeta}, it follows that $\mu\big(\lambda^{\hat\zeta^m,m}\big) = C(m)$.  
	Integration w.r.t. $d\phi(m)$ and application of Theorem \ref{them:main2} and Corollary \ref{cor:main}, yields $\mu(\hat\lambda) = D_0(\mu)$.

	Next, let $\pi_n :0 = t^n_0 < \cdots < t^n_n = 1$ be a sequence of discrete time grids such that $|\pi_n| \to 0$, as $n \to \infty$. According to Proposition \ref{prop:pathwise_inequality}, for $(\theta,\omega)\in\Omb$,
		\bqqn \label{eq:pathwise_inequality3}
			\1_{\left\{\omega^*_{\theta(1)}\ge m \right\}}
		&\le&
			\sum_{i = 1}^{n} \left(\frac{\big(\omega_{\theta(t_i)} - \hat\zeta^m_{t_i} \big)^+}{m - \hat\zeta^m_{t_i}}
			-\frac{\big(\omega_{\theta(t_i)} - \hat\zeta^m_{t_{i+1}} \big)^+}{m - \hat\zeta^m_{t_{i+1}}}\1_{\{i<n\}}\right)\nn\\
		&&+ \sum_{i = 1}^n\1_{\{\omega^*_{\theta(t_{i-1})} \;<\; m \;\le\; \omega^*_{\theta(t_i)} \}} \frac{m - \omega_{\theta(t_i)}}{m - \hat\zeta^m_{t_i}} \nn\\
		&&- \sum_{i = 1}^{n-1}\1_{\{m \;\le\; \omega^*_{\theta(t_i)}; ~ \hat\zeta^m_{t_{i+1}} \;\le\; \omega_{\theta(t_i)} \}} \frac{\omega_{\theta(t_{i+1})} - \omega_{\theta(t_i)}}{m - \hat\zeta^m_{t_{i+1}}}.
		\eqqn
	Note that the l.h.s. does not depend on the partition. Hence, in order to verify that $(\hat\lambda,\widehat H)$ satisfies \eqref{eq:superhedge}, it suffices to show that, for all $\P\in\Pc(\mu)$, integrated w.r.t. $d\phi(m)$ the r.h.s. in \eqref{eq:pathwise_inequality3} converges $\P$-a.s. to $\int_0^1\hat\lambda(B_{T_s},ds) + \int_0^{T_1} \widehat H_s dB_s$.
		
	Application of Lemma \ref{lemm:conv_static} with $X_t=B_{T(t)}$ and $\zeta=\hat\zeta^m$, yields that the static term in \eqref{eq:pathwise_inequality3} converges to 
		\bq
			\int_0^1\lambda^{\hat\zeta^m,m}\left(B_{T(t)},dt\right),
			\quad \Pc(\mu)\mathrm{-q.s.}
		\eq
	In consequence, integrated w.r.t. $d\phi(m)$, the static term in \eqref{eq:pathwise_inequality3} converges to $\hat\lambda(\overline B)=\int_0^1\hat\lambda(B_{T_s},ds)$, $\Pc(\mu)\mathrm{-q.s.}$

	As for the first dynamic term in \eqref{eq:pathwise_inequality3}, the definition of $\tau_m$ yields:
		\bqq
			\sum_{i = 1}^n\1_{\{\omega^*_{\theta(t_{i-1})} < m \le \omega^*_{\theta(t_i)} \}} \frac{m-\omega_{\theta(t_i)}}{m - \hat\zeta^m_{t_i}}
			~\longrightarrow~
			\int_0^{\theta(1)} \frac{\1_{[\tau_m,I^+(\tau_m)]}(s)}{m-\hat\zeta^m_{\theta^{-1}(\tau_m)}}dB_s,
			\quad \Pc-\mbox{q.s.}
		\eqq
	Integration with respect to $d\phi$, then gives the convergence of the corresponding terms. 

	To prove the convergence of the second dynamic term in \eqref{eq:pathwise_inequality3}, we first integrate with respect to $d\phi$. The integrated term may then be re-written as follows: 
		{\setlength{\arraycolsep}{-2cm}
		\bqqn	\label{eq:proof_conv_dyn}
			\sum_{i = 1}^{n-1}
			\int_0^\infty
			\1_{\big\{m \;\le\; \omega^*_{\theta(t_i)};~\hat\zeta^m_{t_{i+1}} \;\le\; \omega_{\theta(t_i)}\big\}} \frac{d\phi(m)}{m - \hat\zeta^m_{t_{i+1}}}\big(\omega_{\theta(t_{i+1})} - \omega_{\theta(t_i)}\big)	\nn\\
			&&=
			\sum_{i = 1}^{n-1}
			\int_0^{\hat\zeta^{-1}_{t_{i+1}}\left(\omega_{\theta(t_i)}\right)\wedge \omega^*_{\theta(t_i)}}
			\frac{d\phi(m)}{m - \hat\zeta^m_{t_{i+1}}}
			\int_{\theta(t_i)}^{\theta(t_{i+1})}dB_s\nn\\
			&&=
			\int_0^{\theta(1)}
			\sum_{i = 1}^{n-1}
			\int_0^{\hat\zeta^{-1}_{t_{i+1}}\left(\omega_{\theta(t_i)}\right)\wedge \omega^*_{\theta(t_i)}}
			\frac{d\phi(m)}{m - \hat\zeta^m_{t_{i+1}}}
			\1_{\big(\theta(t_i),\theta(t_{i+1})\big]}(s)dB_s,
		\eqqn
		}
	where $\hat\zeta^{-1}_{t_{i+1}}$ denotes the inverse of $\hat\zeta^{\cdot}_{t_{i+1}}$.
	We denote the integrand in \eqref{eq:proof_conv_dyn} by $H^n$. Note that the $H^n$ are predictable. 
	Further, since $\hat\zeta$ satisfies \eqref{eq:assump_zeta} and $\phi$ is bounded, the $H^n$ are uniformly bounded. 
	Due to assumption ii), we also have that $H_n\to H$ on $\mathrm{\Omega}\times(0,\infty)$, with
			\bq
			H_s~=~
			\int_0^{\hat\zeta^{-1}_{\theta^{-1}(s)}\left(\omega_{I^-(s)}\right)\wedge \omega^*_{I^-(s)}}
			\frac{d\phi(m)}{m - \hat\zeta^m_{\theta^{-1}(s)}}.		
		\eq 		
	In consequence, for all $\P\in\Pc$, we have that $\int_0^{\theta(1)}H^n_sdB_s \to\int_0^{\theta(1)}H_sdB_s$ in probability. Hence, convergence holds a.s. along a subsequence and we conclude. 	
	\qed
	
	\vspace{3mm}
	
	
		\noindent {\bf Proof of Proposition \ref{prop:limit_ineq}.}
		Let $m>0$ fixed and $\phi(x)=\1_{\{x\ge m\}}$; the general case follows by integration with respect to $d\phi(m)$. 
		Recall that \eqref{eq:pathwise_inequality} holds for all c\`adl\`ag paths $\xb$ (in the proof of Theorem \ref{theo:optimser:dual} we only made use of this result for continuous paths). 
		For a given sequence of partitions $\pi_n :0 = t^n_0 < \cdots < t^n_n = 1$, such that $|\pi_n| \to 0$ as $n \to \infty$, it therefore suffices to argue that the r.h.s. in \eqref{eq:pathwise_inequality} converges to the r.h.s. of \eqref{eq:mtg_ineq_prop}. 
		The static term in \eqref{eq:mtg_ineq_prop} coincides with the static term in \eqref{eq:superhedge}. Hence, the convergence follows by use of the same arguments as in the proof of Theorem \ref{theo:optimser:dual}. 
		Next, consider the first dynamic term in \eqref{eq:pathwise_inequality}. For each c\`adl\`ag path $\xb \in D([0,1], \R)$, let $\tau^m(\xb):=\inf\{t\ge 0: \xb_t\ge m\}$. Due to the right-continuity of $\xb$, we have $\xb_{\tau^m(\xb)}\ge m$. In consequence, 
	\bq
		\lim_{n\to\infty}\sum_{i = 1}^n \1_{\{\xb^*_{t_{i-1}} < m \le \xb^*_{t_i} \}} \frac{m - \xb_{t_i}}{m - \zeta_{t_i}}
		=  \frac{m - \xb_{\tau^m(\xb)}}{m - \zeta_{\tau^m(\xb)}}
		\le 0.
	\eq
	Next, consider the second dynamic term in \eqref{eq:pathwise_inequality}. First, we argue its convergence under assumption ii). To this end, we rewrite it as follows: 
	{\setlength{\arraycolsep}{-0.7cm}
				\bqq \label{eq:proof_conv_dynamic}
					\sum_{i = 1}^{n-1}
			\1_{\big\{m \le \xb^*_{t_i};\;\zeta_{t_{i+1}} \le \xb_{t_i}\big\}}
			\frac{\xb_{t_{i+1}} - \xb_{t_i}}{m - \zeta_{t_{i+1}}}\\
					&&=~\sum_{i = 1}^{n-1}
			\1_{\big\{m \le \xb^*_{t_i};\;\zeta_{t_{i+1}} \le \xb_{t_i}\big\}}
			\int_{t_i}^{t_{i+1}}\frac{dM_t}{m - \zeta_{t_{i+1}}}\nn\\
					&&=~
			\int_0^1
			\sum_{i = 1}^{n-1}
			\1_{\big\{m \le \xb^*_{t_i};\;\zeta_{t_{i+1}} \le \xb_{t_i}\big\}}
			\frac{\1_{\left(t_i,t_{i+1}\right]}(t)}{m - \zeta_{t_{i+1}}}
			\;dM_t.
	\eqq}
		Denote the integrand on the r.h.s. by $H^n$. Note that the $H^n$ are predictable and uniformly bounded. Further, since $\zeta_s=\sum_{k=0}^\infty \zeta^m_k\1_{(t_k,t_{k+1}]}(s)$, we may choose a sequence of partitions $\pi_n :0 = t^n_0 < \cdots < t^n_n = 1$ such that $H_n\to H$ on $D([0,1], \R)$, with 
		\bq
			H_t~=~
			\1_{\big\{m \le \xb^*_{t-};\zeta_t \le \xb_{t-}\big\}}
			\frac{1}{m - \zeta_t}.
		\eq
	It follows that $\int_0^1H^n_tdM_t\to\int_0^1H_tdM_t$ in probability; cf. Theorem I.4.40 in \cite{jacod1987limit}. Hence, convergence holds a.s. along a subsequence and we conclude. 
	Under assumption i), the result follows by, first, integrating the pathwise inequality w.r.t. $d\phi(m)$ and, then, modifying the argument along the same lines as in the proof of Theorem \ref{theo:optimser:dual} (cf. \eqref{eq:proof_conv_dyn}). 	
	\qed 
	
	\subsection{Proof of Proposition \ref{prop:mtq:new}}	
		W.l.o.g., let $m>0$ fixed and $\phi(x)=\1_{\{x\ge m\}}$. The general case then follows by integration with respect to $d\phi(m)$.
		The proof is based on the path-wise inequality \eqref{eq:pathwise_inequality}. Since $\zeta_\cdot$ is non-decreasing and $M$ is c\`adl\`ag, it implies that 	
	{\setlength{\arraycolsep}{-2.15cm}
	\bqq \label{eq:mtg_ineq_n}
		\1_{\left\{M^*_1\ge m\right\}}~\le~
		\sum_{i=1}^n
		~\bigg( \frac{\big(M_{t_i} - \zeta_{t_{i}}\big)^+}{m - \zeta_{t_{i}}} - \frac{(M_{t_i} - \zeta_{t_{i+1}})^+}{m - \zeta_{t_{i+1}}} \1_{i < n} \bigg)&\\
			&&-	
			\sum_{i = 1}^{n-1} \1_{\{m \le M^*_{t_i}, ~ \zeta_{t_{i+1}} \le M_{t_i} \}} \frac{M_{t_{i+1}} - M_{t_i}}{m - \zeta_{t_{i+1}}}
			+
			\sum_{i = 1}^n \1_{\{M^*_{t_{i-1}} < m \le M^*_{t_i} \}} \frac{m - M_{t_i}}{m - \zeta_{t_i}}.\nn
	\eqq}
	We proceed by taking expectation on both sides of this inequality and, then, passing to the limit. To this end, note that the expected value of the dynamic terms is bounded from above by zero (cf. Proposition 3.2 in \cite{HOST}).
	Since the l.h.s. of the inequality is independent of the partition, it follows that 
	\bqq 
		\E \Big[ \1_{\left\{M^*_1\ge m\right\}} \Big]
		&\le&
		\frac{\E\left[(M_{1} - \zeta_{1-})^+\right]}{m - \zeta_{1-}}
		- 
		\lim_{n\to\infty}
		\int_0^1
		f_n(t)
		d\zeta_t,
	\eqq
	where	
	\bqq 
		f_n(t)~=~
		\sum_{i=1}^{n-1}
		~\frac{\1_{t\in (t_{i},t_{i+1}]}}{\zeta_{t_{i+1}}-\zeta_{t_{i}}}
		\bigg( \frac{\E\left[(M_{t_i} - \zeta_{t_{i+1}})^+\right]}{m - \zeta_{t_{i+1}}}
		- \frac{\E\left[(M_{t_i} - \zeta_{t_{i}})^+\right]}{m - \zeta_{t_{i}}}\bigg).
	\eqq	
	Let 
	\begin{equation}
	f(t)~:=~
	\left\{\begin{array}{lll}
		\frac{\E\left[\left(M_t-\zeta_t\right)^+\right] - \P\left[M_t>\zeta_t\right](m-\zeta_t)}{(m-\zeta_t)^2},  
	&& t \in D_\zeta^c \\
		\frac{1}{\zeta_{t^+}-\zeta_t}\left(\frac{\E\left[(M_t - \zeta_{t^+})^+\right]}{m - \zeta_{t^+}}-\frac{\E\left[(M_t - \zeta_t)^+\right]}{m - \zeta_t}\right),
	&& t \in D_\zeta.
	\end{array}\right.\end{equation}
	
	Observe that since $M_t$ is integrable, $\zeta\to \E\left[(M_{t} - \zeta)^+\right]/(m - \zeta)$ is Lipschitz on $(-\infty,m-\varepsilon]$, for all $\varepsilon >0$. Hence, it is differentiable almost everywhere and it follows that $f_n(t)$ converges point-wise to $f(t)$, for $t\in[0,1]\setminus D$, where $D=D_\zeta^c\cap\{t:F(\cdot,t)\textrm{ discontinuous at $\zeta_t$}\}$, with $F(x;t):=\P\left[M_t>x\right]$. 
	Moreover, there is $K>0$ such that $|f_n(t)|\le K$, $t\in[0,1]$, $n> 0$.
	Hence, by use of dominated convergence, it follows that $\int_0^1 f_n(t)d\zeta_t \to \int_0^1 f(t)d\zeta_t$ and we conclude. 
	\qed 

\appendix
\section{Appendix}

	We provide a characterization of the $\sigma$-field on the canonical space $\Omb := \Om \x \V_r$.

	\begin{Lemma} \label{lemm:BorelTribu}
		The Borel $\sigma$-field of the Polish space $\Omb$ is given by 
		$\Fcb_{\infty} := \bigvee_{t \ge 0} \Fcb_t$.
		Moreover, $\Fcb_{t-} := \bigvee_{0 \le s < t} \Fcb_s$ 
		coincides with  the $\sigma$-field generated by all bounded continuous functions 
		$\xi : \Omb \to \R$ which are $\Fcb_t$-measurable. 
	\end{Lemma}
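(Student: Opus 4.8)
The plan is to prove the two assertions in turn, the second building on the first; write $e_r(\om,\theta):=\theta_r$ for the coordinate evaluations on the $\V^+_r$-factor. \textbf{Borel $\sigma$-field.} Since $\Om$ and $\V^+_r$ are Polish, hence second countable, $\Bc(\Omb)=\Bc(\Om)\otimes\Bc(\V^+_r)$. For $\Om=C(\R_+,\R)$ with the metric $\rho$ it is classical that $\Bc(\Om)=\sigma(\om\mapsto\om_s:s\ge 0)$. For $\V^+_r$ I would verify $\Bc(\V^+_r)=\sigma(e_r:r\in[0,1])$: each $e_r$ is the pointwise limit as $\eps\downarrow 0$ of $\theta\mapsto\eps^{-1}\int_r^{r+\eps}\theta_u\,du$ (by right-continuity of $\theta$), and each of the latter maps is continuous for the L\'evy metric, because L\'evy convergence forces pointwise convergence off a countable set and eventual local boundedness, so one may pass to the limit under the integral; hence every $e_r$ is Borel. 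Conversely, unwinding \eqref{eq:levy_metric} and using monotonicity together with right-continuity, each open ball $\{d(\cdot,\theta_0)<\eps\}$ is a countable Boolean combination of sets $\{e_q\le c\}$, $q\in\Q\cap[0,1]$, $c\in\Q$, so by separability every open set lies in $\sigma(e_q:q\in\Q\cap[0,1])$. Combining, $\Bc(\Omb)=\sigma\big(B_s,\ \{T_r\le v\}:s\ge 0,\ v\ge 0,\ r\in[0,1]\big)$; and since for each fixed $r$ every set $\{T_r\le v\}$ lies in $\Fcb_t$ as soon as $t\ge v$, this equals $\bigvee_{t\ge 0}\Fcb_t=\Fcb_\infty$.

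\textbf{$\Fcb_{t-}\subseteq\Gc$, where $\Gc$ is the $\sigma$-field generated by the bounded continuous $\Fcb_t$-measurable maps $\Omb\to\R$.} The maps $(\om,\theta)\mapsto\om_s$ with $s<t$ are continuous and $\Fcb_s$-measurable, hence $\Gc$-measurable. For $s<t$ and $r\in[0,1)$, I would realize $\1_{\{T_r\le s\}}$ as the iterated pointwise limit $\lim_{n\to\infty}\lim_{q\to\infty}\xi_{n,q}$ with $\xi_{n,q}(\om,\theta):=q\int_r^{r+1/q}g_n(\theta_u)\,du$ and $g_n(x):=\min\{(1-n(x-s))^+,\,1\}$: the $\xi_{n,q}$ are bounded, continuous for the L\'evy metric (dominated convergence as above), and, since $g_n(\theta_u)$ depends on $\theta_u$ only through $\theta_u\wedge(s+\tfrac1n)$, they are $\Fcb_{s+1/n}$-measurable, hence $\Fcb_t$-measurable — so $\Gc$-measurable — once $n>1/(t-s)$; letting $q\to\infty$ uses right-continuity of $\theta$, and then $n\to\infty$ yields $\1_{\{T_r\le s\}}$. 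The endpoint $r=1$ is handled identically with left-sided averages $q\int_{1-1/q}^1$, using that the L\'evy metric identifies $\theta_1$ with $\theta_{1-}$ at the right endpoint. Since these maps generate $\Fcb_{t-}$, we obtain $\Fcb_{t-}\subseteq\Gc$.

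\textbf{$\Gc\subseteq\Fcb_{t-}$.} Fix a bounded continuous $\Fcb_t$-measurable $\xi$ and put $\rho(\om,\theta):=(\om_{\cdot\wedge t},\,\theta\wedge t)$, where $(\theta\wedge t)_r:=\theta_r\wedge t$. Using the first part (so that measurability into $\Omb$ may be tested on coordinates), $\rho$ is $\Fcb_{t-}$-measurable: $\om\mapsto\om_{\cdot\wedge t}$ is $\Fcb_{t-}$-measurable into $\Om$ since path-continuity makes $\om_t$ measurable for $\bigvee_{s<t}\sigma(B_s)$, while $\{(\theta\wedge t)_r\le v\}$ equals $\{T_r\le v\}\in\Fcb_v$ for $v<t$ and $\Om$ for $v\ge t$. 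It then suffices to show $\xi=\xi\circ\rho$. First, $\Fcb_t$-measurability already gives $\xi(\om,\theta)=\xi(\om_{\cdot\wedge t},\theta)$, as these two points are not separated by $\Fcb_t$. To replace $\theta$ by $\theta\wedge t$, for $\delta\in(0,1)$ set $\theta^{(\delta)}_r:=\min(\theta_r,t)+\delta\min\{(\theta_r-t)^+,\,1\}$; this lies in $\V^+_r$, equals $\theta_r$ whenever $\theta_r\le t$, and satisfies $\theta^{(\delta)}_r>t\Leftrightarrow\theta_r>t$, so $(\om_{\cdot\wedge t},\theta^{(\delta)})$ is not separated from $(\om_{\cdot\wedge t},\theta)$ by $\Fcb_t$ and $\xi$ agrees on them; meanwhile $\theta^{(\delta)}\to\theta\wedge t$ for the L\'evy metric as $\delta\downarrow 0$ (pointwise convergence of uniformly bounded monotone functions), so continuity of $\xi$ gives $\xi(\om_{\cdot\wedge t},\theta)=\xi(\om_{\cdot\wedge t},\theta\wedge t)=\xi(\rho(\om,\theta))$. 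Hence $\xi=\xi\circ\rho$ is $\Fcb_{t-}$-measurable, and $\Gc\subseteq\Fcb_{t-}$.

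\textbf{Expected main obstacle.} The crux is this last step: one must pin down the right ``freezing'' map $\rho$ and perturbations $\theta^{(\delta)}$ that are simultaneously indistinguishable from $\theta$ under $\Fcb_t$ (so that $\Fcb_t$-measurability of $\xi$ can be invoked) and convergent to $\theta\wedge t$ (so that continuity of $\xi$ can be invoked). This is exactly what neutralizes the only information $\Fcb_t$ carries beyond $\Fcb_{t-}$, namely its ability to separate $\{T_r=t\}$ from $\{T_r>t\}$ — something no continuous functional on $\Omb$ can see. Minor additional care is needed at the right endpoint $r=1$, where the L\'evy metric does not register a jump of $\theta$ at time $1$, and in the routine verification that the L\'evy topology on $\V^+_r$ is generated by the coordinate evaluations.
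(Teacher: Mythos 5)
Your overall architecture mirrors the paper's: characterize the Borel $\sigma$-field via integral averages of the coordinate maps; establish $\Fcb_{t-}\subseteq\Gc$ by building explicit bounded continuous $\Fcb_t$-measurable approximants of the generators; and establish $\Gc\subseteq\Fcb_{t-}$ via a freezing map combined with $\Fcb_t$-indistinguishable perturbations converging to the truncation. In fact your Step~3 is more carefully worked out than the paper's, which simply asserts $\xi_2(\theta)=\xi_2(\theta\wedge t)$ for bounded continuous $\Vc^+_t$-measurable $\xi_2$ without spelling out why — your perturbation $\theta^{(\delta)}$ makes explicit that continuity and $\Fcb_t$-measurability together force $\xi$ to factor through $\theta\wedge t$, which is exactly the content the paper is tacitly relying on.

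There is, however, a genuine error in Step~2 at the endpoint $r=1$. You write that ``the L\'evy metric identifies $\theta_1$ with $\theta_{1-}$'' and accordingly handle $r=1$ by left-sided averages $q\int_{1-1/q}^1$. This cannot be right: if $d$ identified $\theta_1$ with $\theta_{1-}$ it would fail to separate a path with a jump at $1$ from its left-limit modification, contradicting that $(\V^+_r,d)$ is Polish and hence Hausdorff. In fact, the extension convention $\theta_s:=\theta_1$ for $s\in[1,1+\eps]$ built into \eqref{eq:levy_metric} makes $\theta\mapsto\theta_1$ $1$-Lipschitz: at $t=1$ the defining inequalities give $\theta'_1\le\theta_{1+\eps}+\eps=\theta_1+\eps$ and, by symmetry of the metric, $\theta_1\le\theta'_1+\eps$, whence $|\theta_1-\theta'_1|\le d(\theta,\theta')$. (This is precisely the continuity the paper invokes in Lemma~\ref{lem:cont} to conclude $\theta^n(1)\to\tilde\theta(1)$.) Consequently your left-sided averages converge, by right-continuity and monotonicity, to $g_n(\theta_{1-})$ and ultimately produce $\1_{\{\theta_{1-}\le s\}}$ rather than $\1_{\{T_1\le s\}}$; these differ whenever $\theta$ has a jump at $1$, so the generator $\{T_1\le s\}$ is missed. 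The fix is simpler than the device you proposed: since $\theta\mapsto\theta_1$ is continuous, the map $(\om,\theta)\mapsto g_n(\theta_1)$ is already bounded, continuous and $\Fcb_{s+1/n}$-measurable, hence $\Fcb_t$-measurable once $n>1/(t-s)$, and converges pointwise to $\1_{\{T_1\le s\}}$ as $n\to\infty$; no averaging is needed at $r=1$. With this correction the proof goes through and agrees with the paper's.
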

	\proof \rmi We first prove that $\Fcb_{\infty}$ is the Borel $\sigma$-field of the Polish space $\Omb$.
	Define $\Vc^+_t$ as the $\sigma$-field on $\V^+_r$, generated by all sets of the form $\{\theta \in \V^+_r, \theta_u \le s\}$ for $u \in [0,1]$ and $s \le t$; and $\Vc^+_{\infty} := \cup_{t \ge 0} \Vc^+_t$.
	Then $\Fcb_{\infty} = \Fc^0_{\infty} \otimes \Vc^+_{\infty}$, where
	$\Fc^0_{\infty} := \cup_{t \ge 0} \Fc_t^0$ is the Borel $\sigma$-field of $\Om$ (see .e.g. the discussion at the beginning of Section 1.3 of Stroock and Varadhan \cite{SV}).
	So it is enough to check that $\Vc^+_{\infty}$ is the Borel $\sigma$-field $\Bc(\V^+_r)$ of the Polish space $\V^+_r$.
	First, by the right-continuity of $\theta \in \V^+_r$, 
	the L\'evy metric on $\V^+_r$ can be defined equivalently by
	\b*
		d(\theta,\theta')
		~:=~
		\inf \Big\{\eps>0 ~: 
		\theta_{t-\eps} - \eps
		~\le~ 
		\theta'_t
		~\le~
		\theta_{t+\eps}+\eps,
		~\forall t \in \Q \cap [0,1] \Big\},	
	\e*
	where $\Q$ is the collection of all rational numbers.
	Then it follows that $\Bc(\V^+_r) \subseteq \sigma \big( T_u ~: u \in \Q \big) \subseteq \Vc^+_{\infty}$.
	On the other hand, for every $u \in [0,1)$,
	the map $ \theta \mapsto \frac{1}{\eps} \int_u^{u+\eps} \theta(s) ds$ is continuous under L\'evy metric and hence Borel measurable.
	Letting $\eps \to 0$, it follows that $\theta \mapsto \theta(u)$ is also Borel measurable,
	and hence $\Vc^+_{\infty} \subseteq \Bc(\V^+_r)$.
	We then obtain that $\Fcb_{\infty} = \Bc(\Omb)$.
	
	\vspace{1mm}
	
	\noindent \rmii We now consider the $\sigma$-field generated by bounded continuous functions.
	First, it is well known that the filtration $\F^0$ on $\Om$ is left-continuous and
	$\Fc^0_{t-} = \Fc^0_t$ is generated by all bounded continuous functions $\xi_1 : \Om \to \R$ which are $\Fc^0_t$ continuous.

	Next, we notice that for every $t \ge 0$,
	\b*
		\Vc^+_{t-} 
		&:=&
		\bigvee_{0 \le s < t} \Vc^+_s 
		~~:=~~ 
		\sigma \big( T_u \wedge t ~: u \in [0,1] \big).
	\e*
	Let $\xi_2 : \V^+_r \to \R$ be a bounded continuous function which is also $\Vc^+_t$-measurable.
	Then $\xi_2 \big( (\theta_u)_{u \in [0,1]} \big) = \xi_2 \big( (\theta_u \wedge u)_{r \in [0,1]} \big)$,
	which is $\sigma \big( T_u \wedge t ~: u \in [0,1] \big)$-measurable.
	On the other hand, the function 
	$ \theta \mapsto \frac{1}{\eps} \int_u^{u+\eps}  (T_{\ell}(\theta) \wedge t) d \ell$
	from $\V^+_r$ to $\R$ is continuous and $\Vc^+_t$-measurable for $\eps >0$.
	The by taking $\eps \to 0$, it follows that $T_u \wedge t$ is measurable w.r.t.
	the $\sigma$-field generated by all bounded continuous functions $\xi_2 : \V^+_r \to \R$ which are $\Vc^+_t$-measurable.
	Therefore, $\Vc^+_{t-}$ is the $\sigma$-field generated by all bounded continuous functions on $\V^+_r$ which are $\Vc^+_t$-measurable.
	
	Finally, since  $\Fcb_{s} = \Fc^0_{s} \otimes \Vc^+_{s}$,
	it follows that $\Fcb_{t-} = \Fc^0_{t-} \otimes \Vc^+_{t-}$.
	We hence conclude that $\Fcb_{t-}$ is  the $\sigma$-field generated by all bounded continuous functions 
		$\xi : \Omb \to \R$ which are $\Fcb_t$-measurable.
	\qed

\end{document}